%% LyX 2.2.3 created this file.  For more info, see http://www.lyx.org/.
%% Do not edit unless you really know what you are doing.
\documentclass[oneside,english]{amsart}
\usepackage[T1]{fontenc}
\usepackage[latin9]{inputenc}
\usepackage{mathtools}
\usepackage{amsbsy}
\usepackage{amstext}
\usepackage{amsthm}
\usepackage{amssymb}
\usepackage{stmaryrd}
\usepackage{esint}

\makeatletter
%%%%%%%%%%%%%%%%%%%%%%%%%%%%%% Textclass specific LaTeX commands.
\numberwithin{equation}{section}
\numberwithin{figure}{section}
\theoremstyle{plain}
\newtheorem{thm}{\protect\theoremname}[section]
  \theoremstyle{definition}
  \newtheorem{defn}[thm]{\protect\definitionname}
  \theoremstyle{remark}
  \newtheorem{rem}[thm]{\protect\remarkname}
  \theoremstyle{plain}
  \newtheorem{lem}[thm]{\protect\lemmaname}
  \theoremstyle{plain}
  \newtheorem{prop}[thm]{\protect\propositionname}
  \theoremstyle{plain}
  \newtheorem{cor}[thm]{\protect\corollaryname}

\makeatother

\usepackage{babel}
  \providecommand{\corollaryname}{Corollary}
  \providecommand{\definitionname}{Definition}
  \providecommand{\lemmaname}{Lemma}
  \providecommand{\propositionname}{Proposition}
  \providecommand{\remarkname}{Remark}
\providecommand{\theoremname}{Theorem}

\begin{document}

\title[Extension of 2D MCF with free boundary]{Extension of two-dimensional mean curvature flow with free boundary}

\author{Siao-Hao Guo}
\begin{abstract}
Given a mean curvature flow of compact, embedded $C^{2}$ surfaces
satisfying Neumann free boundary condition on a mean convex, smooth
support surface in 3-dimensional Euclidean space, we show that it
can be extended as long as its mean curvature and perimeter stay uniformly
bounded along the flow. 
\end{abstract}

\maketitle

\section{Introduction}

Mean curvature flow (MCF) of hypersurfaces in Euclidean space is a
one-parameter family of hypersurfaces $\left\{ \Sigma_{t}\right\} _{a<t<b}$
whose normal velocity equals the mean curvature vector. Namely, 
\[
\left(\partial_{t}X_{t}\right)^{\perp}=\overrightarrow{H}_{\Sigma_{t}},
\]
where $X_{t}$ is the position vector of $\Sigma_{t}$ and $\overrightarrow{H}_{\Sigma_{t}}=\triangle_{\Sigma_{t}}X_{t}$
is the mean curvature vector. Given a closed, embedded, $C^{2}$ hypersurface
$\Sigma_{0}$, there is a unique MCF $\left\{ \Sigma_{t}\right\} $
starting from it. After a finite time, say as $t\nearrow T$, the
flow will become singular. One can see this by comparing the flow
with the evolution of any enclosing sphere using the maximum principle.
Moreover, singularities of the flow always accompany the blow up of
the second fundamental form (cf. \cite{E}), i.e. 
\[
\limsup_{t\nearrow T}\left\Vert A_{\Sigma_{t}}\right\Vert _{L^{\infty}}=\infty,
\]
where $A_{\Sigma_{t}}=\nabla_{\Sigma_{t}}^{2}X_{t}\cdot N_{\Sigma_{t}}$
is the second fundamental form of $\Sigma_{t}$ and $N_{\Sigma_{t}}$
is the unit normal vector. A natural question is whether the mean
curvature must also blow up at the first singular time. If so, singularities
can be interpreted as the blow up of the normal speed. 

Under various assumptions, the mean curvature does blow up at the
first singular time. For instance, in \cite{HS} it is shown that
if the initial hypersurface is closed and mean convex (i.e. the mean
curvature vector points to the inward direction), the second fundamental
form will be bounded by a multiple of the mean curvature, which forces
the mean curvature to blow up at the first singular time. More examples
can be seen in \cite{C,LS,LW,LZZ}. In particular, Li-Wang in \cite{LW}
show that the MCF of closed surfaces in $\mathbb{\mathbb{R}}^{3}$
can be extended so long as the mean curvature stays uniformly bounded.
This tells us that singularities of MCF of closed surfaces in $\mathbb{\mathbb{R}}^{3}$
can be characterized by the blow-up of the mean curvature. 

On the other hand, people also consider MCF subject to various boundary
conditions, in particular the ``free boundary condition''. Given
an open, connected domain $U$ in Euclidean space with smooth boundary,
a flow $\left\{ \Sigma_{t}\right\} $ living in $U$ satisfies the
``free boundary condition'' provided that it meets $\Gamma=\partial U$
orthogonally at every time. Namely, 
\[
N_{\Sigma_{t}}\cdot\nu=0\qquad\textrm{on}\quad\partial\Sigma_{t}=\Sigma_{t}\cap\Gamma,
\]
where $\nu$ is the inward, unit normal vector of $\Gamma$. As in
the boundaryless case, given a compact, embedded, $C^{2}$ hypersurface
$\Sigma_{0}$ meeting $\Gamma$ orthogonally, there is a unique MCF
$\left\{ \Sigma_{t}\right\} $ which starts from it and has free boundary
on $\Gamma$ (cf. \cite{S}). The flow may have long-time existence
under certain conditions (for instance, see \cite{H1}). However,
if the flow becomes singular in finite time, the second fundamental
form must blow up (cf. \cite{S}).

In this paper, we investigate the extension problem for MCF of surfaces
with free boundary so as to gain insight into the development of singularities.
The support surface $\Gamma=\partial U$ is assumed to be smooth and
mean convex (i.e. the mean curvature vector of $\Gamma$ points toward
$U$), which is used to ensure the ``boundary approaches boundary''
condition (see \cite{K} for more details). In Theorem \ref{main thm}
we show that any MCF $\left\{ \Sigma_{t}\right\} _{0\leq t<T}$ moving
freely in $U$ (i.e. satisfying the free boundary condition) can be
extended as long as its mean curvature and perimeter stay uniformly
bounded, i.e.
\[
\sup_{0\leq t<T}\left(\left\Vert H_{\Sigma_{t}}\right\Vert _{L^{\infty}}+\mathcal{H}^{1}\left(\partial\Sigma_{t}\right)\right)<\infty,
\]
where $H_{\Sigma_{t}}=\overrightarrow{H}_{\Sigma_{t}}\cdot N_{\Sigma_{t}}$
is the mean curvature of $\Sigma_{t}$, $\mathcal{H}^{k}$ denotes
$k$-dimensional Hausdorff measure, and $T>0$ is a finite number.
This can be thought of as a generalization of \cite{LW}, in which
surfaces are assume to be closed. In fact, our proof parallels that
of Li-Wang in \cite{LW}. We adapt their argument to the free boundary
setting by using the method of reflection and many crucial properties
for MCF with free boundary (cf. \cite{B,GJ,K,S}). The reason for
assuming the perimeter bound (compared with the assumption in \cite{LW})
comes from the use of the Gauss-Bonnet theorem (see Section \ref{Hypotheses}).
A key part of the proof is to show that every limit point $P$ of
the flow $\left\{ \Sigma_{t}\right\} _{0\leq t<T}$ as $t\nearrow T$
satisfies
\begin{equation}
\Theta_{\left\{ \Sigma_{t}\right\} }\left(P,T\right)=\left\{ \begin{array}{c}
1,\qquad\textrm{if}\quad P\in U\\
\frac{1}{2},\qquad\textrm{if}\quad P\in\Gamma
\end{array}\right.,\label{unit Huisken's density}
\end{equation}
where $\Theta_{\left\{ \Sigma_{t}\right\} }$ is Huisken's density
(see \cite{E,K} or Lemma \ref{Huisken's density} for the definition).
However, unlike \cite{LW}, where one can apply White's regularity
theorem (cf. \cite{Wh}) to conclude that the flow is smooth up to
time $T$, here we need an additional argument to handle possible
boundary singularities (see Section \ref{Proof of the Main Theorem}). 

The organization of the paper is as follows. In Section \ref{Preliminaries},
fundamental tools for dealing with surfaces with free boundary are
introduced. These include the parametrization of the tubular neighborhood
of a surface and the reflection principle for parabolic equations
with homogeneous Neumann boundary condition. In addition, the crucial
monotonicity formulas for MCF (cf. \cite{H2,E,B}) and the compactness
theorem in the free boundary setting are also included. In Section
\ref{Li-Wang Pseudolocality Theorem}, key estimates for MCF with
free boundary are derived. More precisely, Li-Wang's pseudolocality
theorem and small energy theorem are adapted to the free boundary
setting. Then Section \ref{Hypotheses} - Section \ref{Proof of the Main Theorem}
are devoted to the proof of the main theorem. Specifically, in Section
\ref{Hypotheses}, we formulate the assumptions and prove the condensation
compactness theorem for sequences of parabolic rescaling of the flow.
In Section \ref{Multiplicity One Convergence}, we follow \cite{LW}
to prove (\ref{unit Huisken's density}). Finally, in Section \ref{Proof of the Main Theorem}
we complete the proof. The arguments in Section \ref{Li-Wang Pseudolocality Theorem}
- Section \ref{Multiplicity One Convergence} parallel that of \cite{LW}.

\section*{Acknowledgement}

The author is grateful to Peter Sternberg for suggesting the problem,
for insightful discussions and for providing helpful comments on this
paper. 

\section{\label{Preliminaries}Preliminaries}

In this section we will set up the tools which are required for studying
MCF with free boundary. These can be roughly divided into four different
topics as follows.
\begin{enumerate}
\item Complementary balls, the $\kappa-$graph condition of surfaces, and
the parametrization of a tubular neighborhood.
\item Huisken's monotonicity formula for MCF and Buckland's adaption to
MCF with free boundary. 
\item Stahl's gradient estimates for MCF with free boundary, parametrization
of MCF near the free boundary, the reflection principle for parabolic
equations with Neumann boundary condition, and the $C^{2,1}$ estimates
for MCF with free boundary.
\item Topology and compactness of the space of MCF with free boundary.
\end{enumerate}
Note that the discussion of the above will be in the form of surfaces
in $\mathbb{R}^{3}$ for definiteness; however, most of them work
in higher dimensions as well. 

Let's begin with the first topic. Given an embedded, smooth surface
$\Gamma$ in $\mathbb{R}^{3}$, let $X$ be a point which lies in
a tubular neighborhood of $\Gamma$. Recall that the projection of
$X$ on $\Gamma$ is given by
\[
\mathring{X}\coloneqq X-d_{\Gamma}\left(X\right)\nabla d_{\Gamma}\left(X\right),
\]
where $d_{\Gamma}\left(X\right)=\textrm{dist}\left(X,\Gamma\right)$,
and the reflection of $X$ with respect to $\Gamma$ is given by
\[
\tilde{X}\coloneqq X-2d_{\Gamma}\left(X\right)\nabla d_{\Gamma}\left(X\right)=2\mathring{X}-X.
\]
Below we follow \cite{GJ} to define the complementary balls. 
\begin{defn}
\label{complementary ball}(Complementary Balls)

Let $U$ be an open, connected subset of $\mathbb{R}^{3}$ with an
embedded, smooth boundary surface $\Gamma$. Given $r>0$ and a point
$P\in U$ which lies in the tubular neighborhood of $\Gamma$, define
\[
\tilde{B}_{r}\left(P\right)=\left\{ X\in U\left|\,\tilde{X}\in B_{r}\left(P\right)\setminus U\right.\right\} ,
\]
where $B_{r}\left(P\right)$ is the open ball in $\mathbb{R}^{3}$
centered at $P$ with radius $r$. Note that $\tilde{B}_{r}\left(P\right)=\emptyset$
if $B_{r}\left(P\right)\subset U$.
\end{defn}

In other words, $\tilde{B}_{r}\left(P\right)$ is the reflection of
$B_{r}\left(P\right)\setminus U$ with respect to $\Gamma$. The prototype
of reflection is with respect to a plane. What follows is a simple
remark on the reflection and complementary balls with respect to a
plane.
\begin{rem}
\label{compensation}If 
\[
U=\mathbb{R}_{+}^{3}=\left\{ \left.\left(x_{1},x_{2},x_{3}\right)\right|\,x_{2}>0\right\} 
\]
is a half-space, then 
\[
\Gamma=\partial U=\left\{ \left.\left(x_{1},x_{2},x_{3}\right)\right|\,x_{2}=0\right\} =x_{1}x_{3}-\textrm{plane}
\]
and the reflection with respect to $\Gamma$ is given by
\[
\widetilde{\left(x_{1},x_{2},x_{3}\right)}=\left(x_{1},-x_{2},x_{3}\right).
\]
Let $\Sigma$ be a surface in $U$ which meets $\Gamma$ orthogonally.
Define $\tilde{\Sigma}$ to be the reflection of $\Sigma$ with respect
to $\Gamma$. Namely, 
\[
\tilde{\Sigma}=\left\{ X\left|\,\tilde{X}\in\Sigma\right.\right\} .
\]
Given $P\in\Sigma$ and $r>0$, note that $\Sigma\cap\tilde{B}_{r}\left(P\right)$
is the reflection of $\tilde{\Sigma}\cap B_{r}\left(P\right)$ with
respect to $\Gamma$, and hence
\[
\mathcal{H}^{2}\left(\Sigma\cap\tilde{B}_{r}\left(P\right)\right)=\mathcal{H}^{2}\left(\tilde{\Sigma}\cap B_{r}\left(P\right)\right).
\]
Let $\bar{\Sigma}=\Sigma\cup\tilde{\Sigma}$ be the extension of $\Sigma$
across $\Gamma$, then we have 
\[
\mathcal{H}^{2}\left(\bar{\Sigma}\cap B_{r}\left(P\right)\right)=\mathcal{H}^{2}\left(\Sigma\cap B_{r}\left(P\right)\right)+\mathcal{H}^{2}\left(\tilde{\Sigma}\cap B_{r}\left(P\right)\right)
\]
\[
=\mathcal{H}^{2}\left(\Sigma\cap B_{r}\left(P\right)\right)+\mathcal{H}^{2}\left(\Sigma\cap\tilde{B}_{r}\left(P\right)\right).
\]
In addition, suppose that $\Sigma$ is a $\delta-$Lipschitz graph
for some $\delta>0$, say 
\[
\Sigma=\left\{ \left.\left(x_{1},x_{2},u\left(x_{1},x_{2}\right)\right)\,\right|\,x_{1}\in\mathbb{R},\,x_{2}\geq0\right\} ,
\]
where the function satisfies $\partial_{2}u\left(x_{1},0\right)=0$
(since $\Sigma$ meets $\Gamma$ orthogonally) and $\left|\nabla u\right|\leq\delta$
(due to the $\delta-$Lipschitz graph condition). Then there holds
\[
\frac{\mathcal{H}^{2}\left(\Sigma\cap B_{r}\left(P\right)\right)+\mathcal{H}^{2}\left(\Sigma\cap\tilde{B}_{r}\left(P\right)\right)}{\pi r^{2}}\leq\sqrt{1+\delta^{2}}.
\]
\end{rem}

Note that the above area ratio estimate also holds for non-flat $\Gamma$,
as long as $r$ is sufficiently small (depending on the second fundamental
form of $\Gamma$ and $\Sigma$, see Lemma \ref{unit area ratio of surface}). 

We then proceed to introduce the $\kappa-$graph condition. Throughout
the paper, we use $\nu$ to denote the unit normal vector of $\Gamma$.
Moreover, if $\Gamma=\partial U$, where $U$ is an open, connected
subset of $\mathbb{R}^{3}$, then $\nu$ is assumed to be the inward
normal vector (i.e. pointing toward $U$). 
\begin{defn}
\label{kappa graph condition}($\kappa-$Graph Condition)

A properly embedded $C^{3,1}$ surface $\Gamma$ in $\mathbb{R}^{3}$
satisfies the ``$\kappa-$graph condition'' for some $\kappa>0$
if the following property holds. Choose an arbitrary point on $\Gamma$;
without loss of generality, we may assume that the chosen point is
$O$ and $\nu\left(O\right)=\left(0,1,0\right)$. Then $\Gamma\cap B_{\kappa^{-1}}\left(O\right)$
is a graph of a $C^{3,1}$ function $\varphi$ defined in $T_{O}\Gamma=x_{1}x_{3}-$plane,
i.e. $x_{2}=\varphi\left(x_{1},x_{3}\right)$ and 
\[
\varphi\left(0,0\right)=0,\quad\nabla\varphi\left(0,0\right)=\left(0,0\right).
\]
Moreover, the function $\varphi$ satisfies 
\[
\left|\nabla^{2}\varphi\right|\leq\kappa,\quad\left|\nabla^{3}\varphi\right|\leq\kappa^{2},\quad\left[\nabla^{3}\varphi\right]_{1}\leq\kappa^{3},
\]
where $\left[\cdot\right]_{1}$ is the Lipschitz norm defined by 
\[
\left[v\right]_{1}=\sup_{x\neq x'}\frac{\left|v\left(x\right)-v\left(x'\right)\right|}{\left|x-x'\right|}.
\]
Note that the mean value theorem then yields
\[
\left|\frac{\varphi\left(\xi\right)}{\xi}\right|\,+\,\left|\nabla\varphi\left(\xi\right)\right|\,\lesssim\,\kappa\left|\xi\right|
\]
for $\left|\xi\right|\lesssim\kappa^{-1}$.
\end{defn}

Any embedded, smooth surface $\Gamma$ would locally satisfy the above
condition. More precisely, given $R>0$, there exists $\kappa>0$
so that $\Gamma\cap B_{R}\left(O\right)$ satisfies the $\kappa-$graph
condition. Consequently, if the involved domain is bounded, then for
simplicity we may just assume that $\Gamma$ satisfies the $\kappa-$graph
condition for some $\kappa>0$. 

In follows we discuss the parametrization of a tubular neighborhood
of $\Gamma$. This will be used repeatedly to study the regularity
of surfaces near the boundary.
\begin{defn}
\label{tubular nbd}(Parametrization of Tubular Neighborhood) 

Let $U$ be an open, connected subset of $\mathbb{R}^{3}$ whose boundary
$\Gamma=\partial U$ satisfies the $\kappa-$graph condition for some
$\kappa>0$. Choose an arbitrary point on $\Gamma$; without loss
of generality, we may assume that the chosen point is $O$, $\nu\left(O\right)=\left(0,1,0\right)$,
and that $\Gamma\cap B_{\kappa^{-1}}\left(O\right)$ is defined by
$x_{2}=\varphi\left(x_{1},x_{3}\right)$ as in Definition \ref{kappa graph condition}.
Then we can parametrize $B_{\kappa^{-1}}\left(O\right)$ by 
\[
\left(x_{1},x_{2},x_{3}\right)=\Phi\left(y_{1},y_{2},y_{3}\right)\coloneqq\left(y_{1},\varphi\left(y_{1},y_{3}\right),y_{3}\right)+y_{2}\,\nu,
\]
where
\[
\nu=\nu\left(y_{1},y_{3}\right)=\frac{\left(-\partial_{1}\varphi,1,-\partial_{3}\varphi\right)}{\sqrt{1+\left|\nabla\varphi\right|^{2}}}
\]
is the inward, unit normal vector of $\Gamma\cap B_{\kappa^{-1}}\left(O\right)$.
Note that $\Phi$ is a $C^{2,1}$ diffeomorphism from a neighborhood
of $O$ onto $B_{\kappa^{-1}}\left(O\right)$ and that 
\[
U\cap B_{\kappa^{-1}}\left(O\right)=\Phi\left\{ y_{2}>0\right\} ,\quad\Gamma\cap B_{\kappa^{-1}}\left(O\right)=\Phi\left\{ y_{2}=0\right\} .
\]
Moreover, we have 
\[
\mathring{X}=\mathring{X}\left(y_{1},y_{3}\right)=\left(y_{1},\varphi\left(y_{1},y_{3}\right),y_{3}\right),\quad d_{\Gamma}\left(X\right)=y_{2},\quad\nabla d_{\Gamma}\left(X\right)=\nu\left(\mathring{X}\right),
\]
where $\mathring{X}$ is the projection of $X$ on $\Gamma$ and $d_{\Gamma}\left(X\right)=\textrm{dist}\left(X,\Gamma\right)$. 
\end{defn}

Following the notations in Definition \ref{tubular nbd}, we have
\[
\Phi\left(Y\right)-Y=\left(0,\varphi\left(y_{1},y_{3}\right),0\right)+y_{2}\left\{ \nu-\left(0,1,0\right)\right\} ,
\]
\[
\partial_{1}\Phi=\left(1,\,\partial_{1}\varphi,\,0\right)+y_{2}\,\partial_{1}\nu,\quad\partial_{2}\Phi=\nu\left(\mathring{X}\right),\quad\partial_{3}\Phi=\left(0,\,\partial_{3}\varphi,\,1\right)+y_{2}\,\partial_{3}\nu,
\]
which implies, by Definition \ref{kappa graph condition}, that $\Phi\left(O\right)=O$,
$D\Phi\left(O\right)=I$ (the identity $3\times3$ matrix), and
\begin{equation}
\left|\Phi\left(Y\right)-Y\right|\lesssim\kappa\left|Y\right|^{2},\quad\left|D\Phi\left(Y\right)-I\right|\lesssim\kappa\left|Y\right|\label{approximate identity}
\end{equation}
\[
\left|D^{2}\Phi\left(Y\right)\right|\lesssim\kappa,\quad\left[D^{2}\Phi\left(Y\right)\right]_{1}\lesssim\kappa^{2}.
\]
Notice that $\Phi$ not only maps $\left\{ y_{2}=0\right\} $ onto
$\Gamma\cap B_{\kappa^{-1}}\left(O\right)$ but also preserves their
unit normal vectors and the reflection with respect to them, i.e.
\[
\left(0,1,0\right)\,\overset{D\Phi}{\mapsto}\,\nu,
\]
\begin{equation}
\Phi\left(\tilde{Y}\right)=\widetilde{\Phi\left(Y\right)}\,\Leftrightarrow\,\Phi^{-1}\left(\tilde{X}\right)=\widetilde{\Phi^{-1}\left(X\right)},\label{reflection preserving}
\end{equation}
where $\tilde{Y}$ is the reflection of $Y$ with respect to the plane
$\left\{ y_{2}=0\right\} $ (see Remark \ref{compensation}) and $\widetilde{\Phi\left(Y\right)}$
is the reflection of $\Phi\left(Y\right)$ with respect to $\Gamma$;
$\tilde{X}$ and $\widetilde{\Phi^{-1}\left(X\right)}$ are defined
anaglously. Furthermore, let 
\begin{equation}
h_{ij}\left(Y\right)=\partial_{i}\Phi\left(Y\right)\cdot\partial_{j}\Phi\left(Y\right),\quad i,j\in\left\{ 1,2,3\right\} \label{pull-back metric}
\end{equation}
be the pull-back metric, and
\begin{equation}
\varGamma_{ij}^{k}\left(Y\right)=\frac{1}{2}h^{kl}\left(Y\right)\left\{ \partial_{i}h_{jl}\left(Y\right)+\partial_{j}h_{il}\left(Y\right)-\partial_{l}h_{ij}\left(Y\right)\right\} \label{connection}
\end{equation}
be the Levi-Civita connection, where $h^{kl}\left(Y\right)$ is the
inverse of $h_{kl}\left(Y\right)$. Then we have 
\[
\left|h_{ij}-\boldsymbol{\delta}_{ij}\right|\lesssim\kappa\left|Y\right|,\quad\left|\varGamma_{ij}^{k}\right|\lesssim\kappa,\quad\left[\varGamma_{ij}^{k}\right]_{1}\lesssim\kappa^{2},
\]
\begin{equation}
h_{22}\left(y_{1},y_{2},y_{3}\right)=1,\quad h_{12}\left(y_{1},0,y_{3}\right)=0=h_{32}\left(y_{1},0,y_{3}\right),\label{orthogonality}
\end{equation}
where $\boldsymbol{\delta}_{ij}$ is Kronecker delta.

We conclude the first topic with the following remark on the scaling
property of the parametrization in Definition \ref{tubular nbd}. 
\begin{rem}
\label{scale-preserving}Following the notations of Definition \ref{tubular nbd},
for each $\lambda>0$ we define $\Gamma^{\lambda}=\frac{1}{\lambda}\Gamma$.
Apparently $\Gamma^{\lambda}$ satisfies the $\lambda\kappa-$graph
condition and $\Gamma^{\lambda}\cap B_{\left(\lambda\kappa\right)^{-1}}\left(O\right)$
is a graph of
\[
\varphi^{\lambda}\left(y_{1},y_{3}\right)\coloneqq\frac{1}{\lambda}\,\varphi\left(\lambda y_{1},\lambda y_{3}\right).
\]
Let $\Phi^{\lambda}$ be the map corresponding to to $\Gamma^{\lambda}$
(as in Definition \ref{tubular nbd}), then it's not hard to see that
\[
X=\Phi\left(Y\right)\,\Leftrightarrow\,\frac{1}{\lambda}X=\Phi^{\lambda}\left(\frac{1}{\lambda}Y\right).
\]
In other words, the pull-back of $B_{\left(\lambda\kappa\right)^{-1}}\left(O\right)$
by $\Phi^{\lambda}$ corresponds to the pull-back of $B_{\kappa^{-1}}\left(O\right)$
by $\Phi$ via scaling. 
\end{rem}

Next, we begin the second topic with the fundamental monotonicity
formulas for MCF. The formula was first discovered by Huisken (cf.
\cite{H2}) for complete MCF, then Ecker found the localized version
(cf. \cite{E}). Afterward, Buckland modified the formula to work
for MCF with free boundary (cf. \cite{B}). 

Note that for a given a surface $\Sigma$, we denote its second fundamental
form by $A_{\Sigma}$, mean curvature by $H_{\Sigma}$ and unit normal
vector by $N_{\Sigma}$. 
\begin{lem}
\label{monotonicity formula for MCF}(Monotonicity Formulas for MCF)

Let $\left\{ \Sigma_{t}\right\} _{0\leq t<T}$ be a properly embedded
$C^{2}$ MCF moving freely in $U\subset\mathbb{R}^{3}$, where $T>0$
is a finite number. Assume that $\Gamma=\partial U$ satisfies the
$\kappa-$graph condition for some $\kappa>0$. Then we have the following.
\begin{itemize}
\item Given $P\in U$ and $0<r<\frac{1}{2\sqrt{5}}d_{\Gamma}\left(P\right)$,
there holds
\[
\frac{d}{dt}\int_{\Sigma_{t}}\psi_{r;P,T}\,\Psi_{P,T}\left(X,t\right)\,d\mathcal{H}^{2}\left(X\right)
\]
\[
\leq-\int_{\Sigma_{t}}\left(H_{\Sigma_{t}}-\nabla\ln\Psi_{P,T}\cdot N_{\Sigma_{t}}\right)^{2}\psi_{r;P,T}\,\Psi_{P,T}\left(X,t\right)\,d\mathcal{H}^{2}\left(X\right)
\]
for $T-r^{2}\leq t<T$, where 
\[
\psi_{r;P,T}\left(X,t\right)=\left(1-\frac{\left|X-P\right|^{2}-4\left(T-t\right)}{r^{2}}\right)_{+}^{3}
\]
is a localization function and 
\[
\Psi_{P,T}\left(X,t\right)=\frac{1}{4\pi\left(T-t\right)}\exp\left(-\frac{\left|X-P\right|^{2}}{4\left(T-t\right)}\right)
\]
is the backward heat kernel centered at $\left(P,T\right)$. Note
that on the right side of the formula, the first factor in the integrand
can be written explicitly as 
\[
H_{\Sigma_{t}}-\nabla\ln\Psi_{P,T}\cdot N_{\Sigma_{t}}=H_{\Sigma_{t}}+\frac{\left(X-P\right)\cdot N_{\Sigma_{t}}}{2\left(T-t\right)}.
\]
\item Given $P\in\Gamma$, there holds
\[
\frac{d}{dt}\int_{\Sigma_{t}}e^{85\left(\kappa^{2}\left(T-t\right)\right)^{\frac{2}{5}}}\eta_{\Gamma;P,T}\,\Psi_{\Gamma;P,T}\left(X,t\right)\,d\mathcal{H}^{2}\left(X\right)
\]
\[
\leq-\int_{\Sigma_{t}}\left(H_{\Sigma_{t}}-\nabla\ln\Psi_{\Gamma;P,T}\cdot N_{\Sigma_{t}}\right)^{2}\,e^{85\left(\kappa^{2}\left(T-t\right)\right)^{\frac{2}{5}}}\eta_{\Gamma;P,T}\,\Psi_{\Gamma;P,T}\left(X,t\right)\,d\mathcal{H}^{2}\left(X\right)
\]
for $\max\left\{ T-\frac{1}{2}\left(\frac{3}{320}\right)^{5}\kappa^{-2},0\right\} \leq t<T$,
where 
\[
\eta_{\Gamma;P,T}\left(X,t\right)=\left(1-\frac{\left|X-P\right|^{2}+\left|\tilde{X}-P\right|^{2}-80\left(T-t\right)}{\left(\frac{1}{2}\left(\kappa^{2}\left(T-t\right)\right)^{\frac{2}{5}}\kappa^{-1}\right)^{2}}\right)_{+}^{4}
\]
is a localization function and 
\[
\Psi_{\Gamma;P,T}\left(X,t\right)=\frac{1}{4\pi\left(T-t\right)}\exp\left(-\frac{\frac{1}{2}\left(\left|X-P\right|^{2}+\left|\tilde{X}-P\right|^{2}\right)}{4\left(1+16\left(\kappa^{2}\left(T-t\right)\right)^{\frac{2}{5}}\right)\left(T-t\right)}\right)
\]
is the modified backward heat kernel centered at $\left(P,T\right)$.
Note that $\tilde{X}$ is the reflection of $X$ with respect to $\Gamma$
and that on the right side of the formula, the first factor in the
integrand can be written explicitly as
\[
H_{\Sigma_{t}}-\nabla\ln\Psi_{\Gamma;P,T}\cdot N_{\Sigma_{t}}
\]
\[
=H_{\Sigma_{t}}+\frac{\left(X-P\right)\cdot N_{\Sigma_{t}}-\left(\left(X-P\right)\cdot\nabla d_{\Gamma}-d_{\Gamma}\right)\nabla d_{\Gamma}\cdot N_{\Sigma_{t}}-d_{\Gamma}\nabla^{2}d_{\Gamma}\left(X-P,N_{\Sigma_{t}}\right)}{2\left(1+16\left(\kappa^{2}\left(T-t\right)\right)^{\frac{2}{5}}\right)\left(T-t\right)}.
\]
\end{itemize}
\end{lem}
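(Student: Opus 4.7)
The plan is to derive both assertions from the first variation identity for weighted area along MCF,
\[
\frac{d}{dt}\int_{\Sigma_{t}}f\,d\mathcal{H}^{2}=\int_{\Sigma_{t}}\bigl(\partial_{t}f+\overrightarrow{H}_{\Sigma_{t}}\cdot\nabla f-H_{\Sigma_{t}}^{2}f\bigr)\,d\mathcal{H}^{2},
\]
combined with the substitution $\Delta_{\mathbb{R}^{3}}\Psi=\Delta_{\Sigma_{t}}\Psi+\nabla^{2}\Psi(N_{\Sigma_{t}},N_{\Sigma_{t}})-\overrightarrow{H}_{\Sigma_{t}}\cdot\nabla\Psi$ on $\Sigma_{t}$, followed by Huisken's completion of squares that rewrites the resulting pointwise expression as $-(H_{\Sigma_{t}}-\nabla\ln\Psi\cdot N_{\Sigma_{t}})^{2}\Psi$. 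Applied to $f=\psi\,\Psi$, this leaves three residual contributions to control: a tangential divergence term $-\int_{\Sigma_{t}}\Delta_{\Sigma_{t}}(\psi\,\Psi)$, a cutoff term generated by $\partial_{t}\psi+\Delta_{\mathbb{R}^{3}}\psi$, and curvature-type errors coming from the fact that $\Psi$ may not be the exact backward heat kernel. The proof therefore reduces to (i) establishing a nonpositive sign for the cutoff contribution, (ii) disposing of the boundary term in the divergence theorem, and (iii) absorbing the curvature errors.

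For the interior formula, the hypothesis $r<\tfrac{1}{2\sqrt{5}}d_{\Gamma}(P)$ is precisely what guarantees that for $t\in[T-r^{2},T)$ the support of $\psi_{r;P,T}(\cdot,t)$ is contained in $B_{\sqrt{5}r}(P)\subset U$, so that $\psi_{r;P,T}$ vanishes in a neighborhood of $\Gamma$ and $\partial\Sigma_{t}$ contributes nothing to $\int_{\Sigma_{t}}\Delta_{\Sigma_{t}}(\psi_{r;P,T}\Psi_{P,T})$. The remaining manipulation is identical to Ecker's classical argument, with the exponent $3$ in $\psi_{r;P,T}$ chosen precisely so that its spacetime Laplacian contribution is pointwise dominated by the squared-speed term.

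For the boundary formula, the essential new ingredient is the reflection symmetry of Buckland's modified kernel: both $\Psi_{\Gamma;P,T}$ and $\eta_{\Gamma;P,T}$ depend on $X$ only through the symmetric pair $(|X-P|^{2},|\tilde{X}-P|^{2})$, and since $\widetilde{\tilde{X}}=X$ they are invariant under $X\mapsto\tilde{X}$; at every point of $\Gamma$ their gradients are therefore tangent to $\Gamma$. Combined with the free boundary condition $N_{\Sigma_{t}}\cdot\nu=0$, which makes the outward conormal of $\partial\Sigma_{t}$ in $\Sigma_{t}$ a multiple of $\nu$, this yields $\nabla_{\Sigma_{t}}(\eta_{\Gamma;P,T}\Psi_{\Gamma;P,T})\cdot\nu=0$ on $\partial\Sigma_{t}$ and so kills the boundary contribution from the divergence theorem. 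The main obstacle is the curvature error: because $X\mapsto\tilde{X}$ is not an isometry when $\Gamma$ is curved, the symmetrized kernel only approximately solves the adjoint heat equation, with deviation controlled by $\kappa$ through Definition \ref{kappa graph condition} (producing the correction terms $-((X-P)\cdot\nabla d_{\Gamma}-d_{\Gamma})\nabla d_{\Gamma}\cdot N_{\Sigma_{t}}$ and $-d_{\Gamma}\nabla^{2}d_{\Gamma}(X-P,N_{\Sigma_{t}})$ that appear in the stated squared-speed expression). The widening factor $1+16(\kappa^{2}(T-t))^{2/5}$ in the Gaussian and the exponential prefactor $e^{85(\kappa^{2}(T-t))^{2/5}}$ are calibrated so that these error terms, estimated via Young's inequality, are absorbed into the monotone-in-$t$ correction, and the time restriction $t\geq T-\tfrac{1}{2}(3/320)^{5}\kappa^{-2}$ is exactly the range in which the absorbing constants remain small enough for this scheme to close.
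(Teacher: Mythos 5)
The paper offers no proof of this lemma---it is quoted directly from Huisken/Ecker (interior case) and Buckland (free-boundary case)---and your outline is a faithful reconstruction of exactly those arguments: the first variation identity plus Huisken's completion of the square, the observation that $r<\tfrac{1}{2\sqrt5}d_{\Gamma}(P)$ keeps $\operatorname{supp}\psi_{r;P,T}(\cdot,t)\subset B_{\sqrt5 r}(P)\subset U$ for $t\ge T-r^{2}$ so no boundary term arises, and the reflection symmetry of $\eta_{\Gamma;P,T}\Psi_{\Gamma;P,T}$ which, together with $N_{\Sigma_t}\cdot\nu=0$, annihilates the conormal boundary integral. The only part you assert rather than carry out is the quantitative calibration of the constants $16$, $80$, $85$ and the time window $(3/320)^{5}\kappa^{-2}/2$ that absorbs the curvature errors of the non-isometric reflection---that computation is the actual content of Buckland's paper---but your identification of where those errors come from and of the explicit form of $\nabla\ln\Psi_{\Gamma;P,T}\cdot N_{\Sigma_t}$ is correct.
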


As a corollary, the integrals 
\[
\int_{\Sigma_{t}}\psi_{r;P,T}\,\Psi_{P,T}\,d\mathcal{H}^{2}\quad\textrm{for}\;P\in U
\]
and 
\[
\int_{\Sigma_{t}}e^{85\left(\kappa^{2}\left(T-t\right)\right)^{\frac{2}{5}}}\eta_{\Gamma;P,T}\,\Psi_{\Gamma;P,T}\,d\mathcal{H}^{2}\quad\textrm{for}\;P\in\Gamma
\]
are non-negative and non-increasing in $t$, so they must converge
as $t\nearrow T$. The limit is called Huisken's density of the flow
$\left\{ \Sigma_{t}\right\} $ at $\left(P,T\right)$, and it can
be shown to be independent of the choice $r$ (cf. \cite{E}). We
finish the second topic by listing some important properties of Huisken's
density as follows (cf. \cite{E,K}).
\begin{lem}
\label{Huisken's density}(Huisken's Density)

Following the notations in Lemma \ref{monotonicity formula for MCF},
let 
\[
\Theta_{\left\{ \Sigma_{t}\right\} }\left(P,T\right)=\left\{ \begin{array}{c}
\lim_{t\nearrow T}\int_{\Sigma_{t}}\psi_{r;P,T}\,\Psi_{P,T}\,d\mathcal{H}^{2},\quad\textrm{if}\;P\in U\\
\lim_{t\nearrow T}\int_{\Sigma_{t}}e^{85\left(\kappa^{2}\left(T-t\right)\right)^{\frac{2}{5}}}\eta_{\Gamma;P,T}\,\Psi_{\Gamma;P,T}\,d\mathcal{H}^{2},\quad\textrm{if}\;P\in\Gamma
\end{array}\right..
\]
If the flow $\left\{ \Sigma_{t}\right\} _{0\leq t<T}$ is regular
up to time $T$, let $\Sigma_{T}=\lim_{t\nearrow T}\,\Sigma_{t}$.
Then we have

\[
\Theta_{\left\{ \Sigma_{t}\right\} }\left(P,T\right)=\left\{ \begin{array}{c}
1,\quad\textrm{if}\quad P\in\Sigma_{T}\cap U\\
\frac{1}{2},\quad\textrm{if}\quad P\in\Sigma_{T}\cap\Gamma\\
0,\quad\textrm{if}\quad P\notin\Sigma_{T}
\end{array}\right..
\]
Moreover, $\Theta_{\left\{ \Sigma_{t}\right\} }\left(P,T\right)$
is upper semi-continuous. Namely, given a sequence $t_{i}\nearrow T$
and points $P_{i}\in\Sigma_{t_{i}}$ so that 
\begin{itemize}
\item $P_{i}\rightarrow P$ as $i\rightarrow\infty$,
\item If $P\in\Gamma$, then $P_{i}\in\Gamma$ for $i\gg1$,
\end{itemize}
then we have
\[
\Theta_{\left\{ \Sigma_{t}\right\} }\left(P,T\right)\geq\limsup_{i\rightarrow\infty}\,\Theta_{\left\{ \Sigma_{t}\right\} }\left(P_{i},t_{i}\right).
\]
 
\end{lem}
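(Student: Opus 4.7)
The statement has two parts: the explicit density values when the flow is regular up to $T$, and the upper semi-continuity. The plan is to evaluate the two integrals in the definition of $\Theta$ directly in the limit $t\nearrow T$ for the first part, and then to exploit the non-increasing property of those same integrals (guaranteed by Lemma \ref{monotonicity formula for MCF}) for the second part.

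For the density values I would split into three cases according to the location of $P$. If $P\notin\Sigma_{T}$, the $C^{2}$-regularity of $\left\{ \Sigma_{t}\right\} $ up to $T$ provides $\delta>0$ with $\Sigma_{t}\cap B_{2\delta}\left(P\right)=\emptyset$ for $t$ near $T$; then for any admissible $r<\delta$ the space-time supports of $\psi_{r;P,T}\,\Psi_{P,T}$ and $\eta_{\Gamma;P,T}\,\Psi_{\Gamma;P,T}$ avoid $\Sigma_{t}$, so $\Theta=0$. If $P\in\Sigma_{T}\cap U$, I would parametrize $\Sigma_{t}$ near $P$ as a normal $C^{2}$-graph over $T_{P}\Sigma_{T}$, perform the parabolic rescaling $Z=\left(X-P\right)/\sqrt{T-t}$, and observe that the rescaled surfaces $C^{2}_{\textrm{loc}}$-converge to the flat plane $T_{P}\Sigma_{T}$; dominated convergence yields
\[
\Theta\left(P,T\right)\,=\,\int_{T_{P}\Sigma_{T}}\frac{1}{4\pi}\,e^{-\left|Z\right|^{2}/4}\,d\mathcal{H}^{2}\left(Z\right)\,=\,1.
\]
If $P\in\Sigma_{T}\cap\Gamma$, I would first apply the straightening map $\Phi$ of Definition \ref{tubular nbd}, which by (\ref{reflection preserving}) turns reflection with respect to $\Gamma$ into standard reflection with respect to $\left\{ y_{2}=0\right\}$; in these coordinates the free boundary condition $N_{\Sigma_{T}}\cdot\nu=0$ at $P$ says that the tangent half-plane of $\Sigma_{T}$ at $P$ meets $\left\{ y_{2}=0\right\}$ orthogonally. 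After the same parabolic rescaling, Remark \ref{compensation} applied to the reflected extension $\bar{\Sigma}_{t}=\Sigma_{t}\cup\tilde{\Sigma}_{t}$ gives
\[
\int_{\Sigma_{t}}e^{85\left(\kappa^{2}\left(T-t\right)\right)^{2/5}}\eta_{\Gamma;P,T}\,\Psi_{\Gamma;P,T}\,d\mathcal{H}^{2}\;=\;\tfrac{1}{2}\int_{\bar{\Sigma}_{t}}\Psi_{P,T}\,d\mathcal{H}^{2}\,+\,o\left(1\right),
\]
and the interior computation applied to $\bar{\Sigma}_{t}$ (whose tangent plane at $P$ is the extension of the tangent half-plane) produces $\Theta\left(P,T\right)=1/2$.

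For the upper semi-continuity I would fix $r>0$ small enough to be admissible at $\left(P,T\right)$. The hypothesis that $P_{i}\in\Gamma$ whenever $P\in\Gamma$ ensures that for $i\gg1$ the same type of monotonicity (interior or boundary) applies at $\left(P_{i},t_{i}\right)$ with the same $r$: in the interior case because $d_{\Gamma}\left(P_{i}\right)\to d_{\Gamma}\left(P\right)>0$, and in the boundary case because the time window in Lemma \ref{monotonicity formula for MCF} depends only on $\kappa$. By monotonicity, for any fixed $s<T$ and all $i$ large,
\[
\Theta_{\left\{ \Sigma_{t}\right\} }\left(P_{i},t_{i}\right)\,\leq\,\int_{\Sigma_{s}}\psi_{r;P_{i},t_{i}}\left(X,s\right)\Psi_{P_{i},t_{i}}\left(X,s\right)\,d\mathcal{H}^{2}\left(X\right)
\]
(with the boundary analogue when $P\in\Gamma$). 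Since $\Sigma_{s}$ is a fixed compact surface and the integrand depends continuously on the basepoint for $s<T$, dominated convergence lets me send $i\to\infty$ to obtain the analogous integral centered at $\left(P,T\right)$ at time $s$. Letting $s\nearrow T$ and recalling the definition of $\Theta\left(P,T\right)$ gives $\limsup_{i}\Theta_{\left\{ \Sigma_{t}\right\} }\left(P_{i},t_{i}\right)\leq\Theta_{\left\{ \Sigma_{t}\right\} }\left(P,T\right)$.

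The main obstacle I anticipate is the density computation at boundary points. One must verify that the $\kappa$-dependent corrections in the boundary monotonicity --- the prefactor $e^{85\left(\kappa^{2}\left(T-t\right)\right)^{2/5}}$, the denominator factor $1+16\left(\kappa^{2}\left(T-t\right)\right)^{2/5}$ in $\Psi_{\Gamma;P,T}$, and the deviation of $\Phi$ from the identity quantified in (\ref{approximate identity}) --- all contribute only $o\left(1\right)$ after parabolic rescaling, so that the reflection trick of Remark \ref{compensation} genuinely reduces the boundary case to the interior case on the extended surface $\bar{\Sigma}_{t}$. Once this is carried out, the remaining pieces (the case $P\notin\Sigma_{T}$, the interior density computation, and the semi-continuity passage to the limit) are fairly routine consequences of $C^{2}$-regularity up to $T$, monotonicity, and dominated convergence.
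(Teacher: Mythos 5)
The paper states this lemma without proof, citing Ecker [E] and Koeller [K]; your argument is precisely the standard one found there (parabolic rescaling plus Gaussian computation on the tangent (half-)plane for the density values, with the reflection trick reducing the boundary case to the interior one, and monotonicity plus continuity of the localized Gaussian integral at a fixed earlier time slice for upper semi-continuity). The technical point you flag — that the $\kappa$-dependent corrections in Buckland's boundary monotonicity and the deviation of $\Phi$ from the identity are all $o(1)$ under the rescaling — is indeed the only delicate step, and it goes through as you anticipate, so the proposal is correct and consistent with the paper's intended justification.
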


The main point of the third topic is about the $C^{2,1}$ estimates
for MCF with free boundary. Since the interior estimate can be found
in \cite{E}, our focus will be on the boundary regularity. To achieve
that, we will first use the map in Definition \ref{tubular nbd} to
pull back the flow to a half-space, where the pull-back flow has free
boundary on the boundary plane. Then we locally write the flow as
a graph of a time-dependent function, which satisfies a parabolic
equation and homogeneous Neumann boundary condition. Using the method
of reflection, we can extend the function across the boundary and
derive the boundary estimate. 

Throughout the paper, the following notation will be adopted. Given
a surface $\Sigma$ and a point $P\in\Sigma$, we denote by $\left(\Sigma\right)_{P}$
the path-connected component of $\Sigma$ containing $P$, i.e.
\begin{equation}
\left(\Sigma\right)_{P}=\left\{ Q\in\Sigma\left|\,\exists\,\,\textrm{continuous path}\,\gamma:\left[0,1\right]\rightarrow\Sigma\,\,\textrm{s.t.}\,\,\gamma\left(0\right)=P,\,\gamma\left(1\right)=Q\right.\right\} .\label{connected component}
\end{equation}
The following gradient estimate for MCF with free boundary is due
to Stahl, who derived the estimate by using the maximum principle
(cf. \cite{EH,S}).
\begin{lem}
\label{Stahl gradient}(Stahl's Gradient Estimate for MCF)

Given $\varepsilon>0$, there exists $\delta>0$ with the following
property. Let $\left\{ \Sigma_{t}\right\} _{a\leq t\leq b}$ be a
properly embedded $C^{2}$ MCF moving freely in $U\subset\mathbb{R}^{3}$,
where $a\leq0\leq b$ are constants. Assume that $O\in\Sigma_{0}$
and that 
\begin{itemize}
\item Either $B_{1}\left(O\right)\subset U$, or $O\in\Gamma$ and $\Gamma=\partial U$
satisfies the $\kappa-$graph condition for some $0<\kappa\leq1$;
\item There holds 
\[
\sup_{a\leq t\leq b}\left\Vert A_{\Sigma_{t}}\right\Vert _{L^{\infty}\left(B_{1}\left(O\right)\right)}\leq K
\]
for some $K\geq1$. 
\end{itemize}
Then we have 
\[
N_{\Sigma_{t}}\left(P\right)\cdot N_{\Sigma_{0}}\left(O\right)\,\geq\,1-\varepsilon
\]
for all $P\in\left(\Sigma_{t}\cap B_{\frac{\delta}{K}}\left(O\right)\right)_{O_{t}}$,
$\breve{a}\leq t\leq\breve{b}$, where 
\[
\breve{a}=\max\left\{ a,-\frac{\delta}{K^{2}}\right\} ,\quad\breve{b}=\min\left\{ b,\frac{\delta}{K^{2}}\right\} ,
\]
and $O_{t}$ is the normal trajectory of $O$ along the flow, i.e.
\[
O_{t}\in\Sigma_{t}\quad\textrm{and}\quad\partial_{t}O_{t}\,\bot\,T_{O_{t}}\Sigma_{t}.
\]
 
\end{lem}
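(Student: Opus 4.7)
The plan is to apply the parabolic maximum principle to the quantity $w(X,t) = N_{\Sigma_t}(X) \cdot e_0$, where $e_0 := N_{\Sigma_0}(O)$. After the parabolic rescaling $X \mapsto K X$, $t \mapsto K^2 t$, I may assume $K = 1$, so that $|A_{\Sigma_t}| \leq 1$ on $B_1(O)$ and, in the boundary case, $\Gamma$ satisfies the $(\kappa/K)$-graph condition with $\kappa/K \leq 1$. Under MCF, $w$ satisfies the evolution equation $(\partial_t - \Delta_{\Sigma_t}) w = |A_{\Sigma_t}|^2\, w$, with initial value $w(O,0) = 1$. It then suffices to establish $w \geq 1 - \varepsilon$ on the connected component of $\Sigma_t \cap B_\delta(O)$ containing $O_t$, for a suitably small $\delta = \delta(\varepsilon) > 0$.

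In the interior case $B_1(O) \subset U$, I would invoke the standard maximum principle argument of Ecker-Huisken \cite{EH}. The quantity $1 - w$ is non-negative, vanishes at $(O, 0)$, and satisfies a parabolic inequality with coefficients uniformly bounded by $|A|^2 \leq 1$. Applying the interior parabolic Harnack inequality (or equivalently comparing with a barrier built from Ecker's localization function) then yields $1 - w \leq \varepsilon$ in a space-time cylinder of size $\delta(\varepsilon)$ around $(O, 0)$.

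For the boundary case $O \in \Gamma$, I would use the reflection principle together with the parametrization $\Phi$ of Definition \ref{tubular nbd}. After pulling the flow back to the half-space $\{y_2 \geq 0\}$, the free boundary condition $N_{\Sigma_t} \cdot \nu = 0$, combined with the orthogonality relations (\ref{orthogonality}), implies that the pulled-back surface meets $\{y_2 = 0\}$ orthogonally in the pull-back metric $h_{ij}$. The reflection $(y_1, y_2, y_3) \mapsto (y_1, -y_2, y_3)$, which by (\ref{reflection preserving}) corresponds to the geometric reflection of $\Sigma_t$ through $\Gamma$, therefore produces a $C^2$ extension of the pulled-back flow across $\{y_2 = 0\}$. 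This extension evolves by mean curvature in the metric $h_{ij}$, which by (\ref{approximate identity}) is a small perturbation of the Euclidean metric, so the interior argument applied to the extension gives the desired lower bound on $w$.

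The hard part will be controlling the perturbation introduced by the non-flatness of $\Gamma$: the reflected flow is not a genuine Euclidean MCF, and one must account for lower-order terms arising from $h_{ij} - \boldsymbol{\delta}_{ij}$ and the Christoffel symbols $\varGamma_{ij}^k$, whose bounds are of size $O(\kappa) = O(1)$ in the rescaled picture. These error terms are absorbable by restricting to a sufficiently small parabolic scale $\delta/K$, and this absorption is precisely what quantifies the dependence $\delta = \delta(\varepsilon)$.
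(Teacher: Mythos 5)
The paper does not actually prove Lemma \ref{Stahl gradient}; it defers to Stahl \cite{S} and Ecker--Huisken \cite{EH}, and your maximum-principle strategy for $w=N_{\Sigma_t}\cdot N_{\Sigma_0}(O)$, based on $(\partial_t-\Delta_{\Sigma_t})w=|A_{\Sigma_t}|^2\,w$ plus reflection across $\Gamma$, is the route those references take. The genuine gap in your write-up is at negative times. The lemma requires the estimate on $[\breve a,\breve b]$ with $\breve a<0$, but everything you invoke propagates information only \emph{forward} from the data at $t=0$. The function $v=1-w$ is a non-negative \emph{subsolution}, $(\partial_t-\Delta_{\Sigma_t}-|A|^2)v=-|A|^2\le0$, vanishing at the single space-time point $(O,0)$; the parabolic Harnack inequality bounds earlier suprema by later infima only for non-negative \emph{solutions} (and the weak Harnack inequality applies to supersolutions), so the vanishing of a subsolution at one later point gives no control at earlier times, and a barrier comparison launched from the slice $t=0$ likewise says nothing about $t<0$. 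Nor can you integrate backward along the normal trajectory, since $\tfrac{d}{dt}N_{\Sigma_t}(O_t)=\nabla_{\Sigma_t}H$ is not controlled by $\|A\|_{L^\infty}$ alone. Two standard repairs: (i) run your forward argument from the initial time $\breve a$ with reference direction $e=N_{\Sigma_{\breve a}}(O_{\breve a})$, obtaining $N_{\Sigma_t}(P)\cdot e\ge1-\varepsilon/2$ for all admissible $(P,t)$; specializing to $(P,t)=(O,0)$ gives $N_{\Sigma_0}(O)\cdot e\ge1-\varepsilon/2$, and the two inequalities combine since two unit vectors each within a small angle of $e$ are within a small angle of each other; or (ii) invoke the interior/boundary smoothing estimates $|\nabla A_{\Sigma_s}|\lesssim K\max\{K,(s-a)^{-1/2}\}$ and check that $\int_{\breve a}^{0}|\nabla H_{\Sigma_s}(O_s)|\,ds\lesssim\sqrt{\delta}$.

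Two smaller points on the boundary case. First, the reflected flow is not a mean curvature flow for the metric $h_{ij}$ itself but for the metric obtained by gluing $h$ on $\{y_2\ge0\}$ to its reflection on $\{y_2<0\}$, which by (\ref{orthogonality}) is only Lipschitz across the interface; that is enough for the maximum principle but should be said. Second, for the even extension of $w$ to be admissible you need the Neumann condition for $w$ itself at $\{y_2=0\}$, not just for the defining function of the graph; in the flat model this is automatic because $e_0\perp\nu$ and the reflected flow is a genuine smooth MCF, while in the curved model it holds only up to errors of size $O(\kappa/K)$ coming from (\ref{approximate identity}) and (\ref{reflection preserving}), which are absorbable at scale $\delta/K$ exactly as you indicate.
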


As a consequence of the above lemma, near the point $O$ and around
the time $0$, the flow $\left\{ \Sigma_{t}\right\} $ is the graph
of a time-dependent function over $T_{O}\Sigma_{0}$ with small gradient.
However, if $O\in\Gamma$ and $U$ is not a cylinder, the domain of
the function might be time-dependent. To solve this issue, we will
use the map in Definition \ref{tubular nbd} to parametrize the tubular
neighborhood of $\Gamma$ so as to make the pull-back of the domain
(of the defining function) independent of time. Below we will carry
out the details of realizing this idea. 

Note that Einstein summation convention will be adopted throughout
the paper.
\begin{lem}
\label{parametrization of MCF near the boundary}(Parametrization
of MCF Near the Boundary)

Fix $\iota\in\left\{ 0,1\right\} $. Given $\varepsilon>0$, there
exists $\delta>0$ with the following property. Let $\left\{ \Sigma_{t}\right\} _{-1\leq t\leq\iota}$
be a properly embedded $C^{2}$ MCF moving freely in $U\subset\mathbb{R}^{3}$.
Assume that $O\in\Gamma=\partial U$, $\nu\left(O\right)=\left(0,1,0\right)$
and that 
\begin{itemize}
\item $\Gamma$ satisfies the $\kappa-$graph condition for some $0<\kappa\leq1$;
\item The second fundamental form of $\left\{ \Sigma_{t}\right\} $ satisfies
\[
\sup_{-1\leq t\leq\iota}\left\Vert A_{\Sigma_{t}}\right\Vert _{L^{\infty}\left(B_{1}\left(O\right)\right)}\leq K
\]
 for some $K\geq1$;
\item There exists $P\in\partial\Sigma_{0}\cap B_{\frac{\delta}{K}}\left(O\right)$
so that $N_{\Sigma_{0}}\left(P\right)\cdot\left(0,0,1\right)\geq1-\delta$.
\end{itemize}
Then $\left(\Sigma_{t}\cap B_{\frac{\varepsilon}{K}}\left(O\right)\right)_{P_{t}}$
can be parametrized by 
\[
X=\Phi\left(y_{1},y_{2},u\left(y_{1},y_{2},\,t\right)\right)
\]
 for 
\[
\sqrt{y_{1}^{2}+y_{2}^{2}}<r,\;y_{2}\geq0,\;-\frac{\delta}{K^{2}}\leq t\leq\frac{\delta}{K^{2}}\iota,
\]
where $P_{t}$ is the normal trajectory of $P$ along the flow (i.e.
$P_{t}\in\Sigma_{t}$ and $\partial_{t}P_{t}\perp T_{P_{t}}\Sigma_{t}$)
and $\Phi$ is the map in Definition \ref{tubular nbd}. 

Moreover, the function $u\left(y,t\right)$ satisfies
\begin{equation}
\partial_{t}u=g^{ij}\left(y,u,\nabla u\right)\partial_{ij}^{2}u+f\left(y,u,\nabla u\right),\label{eq of graph}
\end{equation}
\begin{equation}
\partial_{2}u\left(y_{1},0,t\right)=0,\label{homogeneous Neumann boundary condition}
\end{equation}
where $g^{ij}\left(y,u,\nabla u\right)$ is the inverse of $g_{ij}\left(y,u,\nabla u\right)$,
and $g_{ij}\left(y,u,\nabla u\right)$, $f\left(y,u,\nabla u\right)$
are defined in (\ref{induced metric}), (\ref{inhomogeneous term}),
respectively. 

Furthermore, we have the following estimates: 
\[
K\left|u\left(y,t\right)\right|\,+\,\left|\nabla u\left(y,t\right)\right|\leq\varepsilon,\quad\left|\nabla^{2}u\left(y,t\right)\right|\lesssim K,
\]
\[
\left|g^{ij}\left(y,u,\nabla u\right)-\boldsymbol{\delta}^{ij}\right|\leq\varepsilon,\quad\left[g^{ij}\left(y,u,\nabla u\right)\right]_{1,\textrm{spacial}}\lesssim K,
\]
\begin{equation}
\left.g^{12}\left(y,u,\nabla u\right)\right|_{x_{2}=0}=0,\label{reflection condition}
\end{equation}
\[
\left|f\left(y,u,\nabla u\right)\right|\lesssim K,\quad\left[f\left(y,u,\nabla u\right)\right]_{1,\textrm{spatial}}\lesssim K^{2},
\]
where $\left[\cdot\right]_{1,\textrm{spacial}}$ is the Lipschitz
norm with respect to the spacial variable defined by 
\[
\left[v\right]_{1,\textrm{spacial}}=\sup_{y\neq y'}\frac{\left|v\left(y,t\right)-v\left(y',t\right)\right|}{\left|y-y'\right|}.
\]
\end{lem}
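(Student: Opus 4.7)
The plan is to combine Stahl's gradient estimate with the tubular-neighborhood straightening $\Phi$ from Definition \ref{tubular nbd}. First, applying Lemma \ref{Stahl gradient} at $P$ with $N_{\Sigma_0}(P)\cdot(0,0,1)\geq 1-\delta$ yields that throughout a parabolic neighborhood of $P$ of spatial size comparable to $\varepsilon/K$ and temporal size $\delta/K^2$, the unit normal $N_{\Sigma_t}$ remains close to $(0,0,1)$. Since $P\in B_{\delta/K}(O)$ and $\delta$ can be chosen arbitrarily small, this translates into nearly horizontal tangent planes on $(\Sigma_t)_{P_t}\cap B_{\varepsilon/K}(O)$.

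Next, I would pull back via $\Phi^{-1}$. Because $D\Phi$ is uniformly close to the identity in this ball by (\ref{approximate identity}), the pulled-back surface still has tangent planes close to the $y_1 y_2$-plane, so the implicit function theorem produces a defining function $u(y_1,y_2,t)$ with $(\Sigma_t)_{P_t}\cap B_{\varepsilon/K}(O) = \Phi\bigl\{(y_1,y_2,u(y_1,y_2,t)) : y_2\geq 0,\ \sqrt{y_1^2+y_2^2}<r\bigr\}$. The size bounds $K|u|+|\nabla u|\leq\varepsilon$ follow from Stahl's estimate combined with $P\in B_{\delta/K}(O)$, while $|\nabla^2 u|\lesssim K$ follows from the second fundamental form bound after accounting for the estimates on $D^2\Phi$.

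To derive the PDE (\ref{eq of graph}), I would compute the induced metric on the graph in the Riemannian manifold $(\mathbb{R}^3,h)$,
\begin{equation*}
g_{ij}(y,u,\nabla u) = h_{ij} + h_{i3}\partial_j u + h_{j3}\partial_i u + h_{33}\,\partial_i u\,\partial_j u,
\end{equation*}
evaluated at $(y_1,y_2,u)$, and then project the MCF equation $\partial_t X\cdot N_{\Sigma_t} = H_{\Sigma_t}$ along $\partial_3\Phi$. The leading second-order part produces $g^{ij}\partial_{ij}^2 u$, while the inhomogeneous term $f(y,u,\nabla u)$ absorbs the Christoffel-type contributions coming from $\partial_k h_{ij}$. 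For the Neumann boundary condition (\ref{homogeneous Neumann boundary condition}), I would use that at $y_2=0$ one has $\partial_2\Phi=\nu$, so the free boundary condition $N_{\Sigma_t}\cdot\nu=0$ forces $\partial_2\Phi$ to lie in $\textrm{span}(\partial_i\Phi+\partial_i u\,\partial_3\Phi : i=1,2)$. Expanding in the basis $\{\partial_1\Phi,\partial_2\Phi,\partial_3\Phi\}$ immediately yields $\partial_2 u(y_1,0,t)=0$.

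The reflection condition (\ref{reflection condition}) is then a one-line consequence: at $y_2=0$, (\ref{orthogonality}) gives $h_{12}=h_{32}=0$ and the Neumann condition gives $\partial_2 u=0$, hence $g_{12}|_{y_2=0}=0$ and $g^{12}|_{y_2=0}=0$. The Lipschitz bounds on $g^{ij}$ and $f$ follow from those on $D^2\Phi$ recorded after (\ref{approximate identity}) combined with the bounds on $u$ and $\nabla u$. The main technical hurdle is keeping track of the three competing length scales --- the $\delta/K$ localization of the hypothesis, the $\varepsilon/K$ spatial domain of the parametrization, and the $\delta/K^2$ time interval --- and verifying that the normal trajectory $P_t$ remains inside the parametrization domain for the entire time interval.
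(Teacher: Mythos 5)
Your proposal follows essentially the same route as the paper: Stahl's gradient estimate plus the hypothesis on $N_{\Sigma_{0}}\left(P\right)$ to get a small-gradient graph, pullback by $\Phi$ using (\ref{approximate identity}), the free boundary condition combined with $N_{\Sigma_{t}}\cdot\partial_{2}X=0$ to obtain (\ref{homogeneous Neumann boundary condition}), the orthogonality relations (\ref{orthogonality}) to obtain (\ref{reflection condition}), and the MCF equation to derive (\ref{eq of graph}) with the stated coefficient bounds. The approach and all the key computations match the paper's proof.
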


\begin{proof}
By Lemma \ref{Stahl gradient} and the assumption regarding $P$ and
$N_{\Sigma_{0}}\left(P\right)$, if $\delta>0$ is sufficiently small,
the flow
\[
\left\{ \left(\Sigma_{t}\cap B_{\frac{\delta}{K}}\left(O\right)\right)_{P_{t}}\right\} _{-\frac{\delta}{K^{2}}\leq t\leq\frac{\delta}{K^{2}}\iota}
\]
are graphs with small gradients with respect to the direction $\left(0,0,1\right)$.
Using Definition \ref{tubular nbd} and (\ref{approximate identity}),
the preimage of the flow under $\Phi$ are also graphs with respect
to the direction $\left(0,0,1\right)$ and 
\begin{equation}
N_{\Sigma_{t}}\cdot\partial_{3}\Phi>0.\label{rectified graph condition}
\end{equation}
Consequently, the flow can be parametrized by 
\[
X\left(y,t\right)=\Phi\left(y,u\left(y,t\right)\right)
\]
for some function $u\left(y,t\right)$ which is defined on $\sqrt{y_{1}^{2}+y_{2}^{2}}<\frac{\delta}{K}$,
$y_{2}\geq0$, $-\frac{\delta}{K^{2}}\leq t\leq\frac{\delta}{K^{2}}\iota$
and satisfies 
\[
K\left|u\left(y,t\right)\right|\,+\,\left|\nabla u\left(y,t\right)\right|\leq\varepsilon,\quad K^{-1}\left|\nabla^{2}u\left(y,t\right)\right|\lesssim1.
\]
Note that $\delta>0$ is chosen so that that the above conditions
can be met. 

Next, in order to get more information on $u\left(y,t\right)$, let's
compute 
\[
\partial_{t}X=\partial_{t}u\,\partial_{3}\Phi\left(y,u\right);
\]
\[
\partial_{i}X=\partial_{i}\Phi\left(y,u\right)+\partial_{i}u\,\partial_{3}\Phi\left(y,u\right),\quad i\in\left\{ 1,2\right\} ;
\]
\[
D_{i}\partial_{j}X=\partial_{ij}^{2}\Phi+\partial_{ij}^{2}u\,\partial_{3}\Phi+\partial_{i}u\,\partial_{j3}^{2}\Phi+\partial_{j}u\,\partial_{i3}^{2}\Phi+\partial_{i}u\,\partial_{j}u\,\partial_{33}^{2}\Phi,\quad i,j\in\left\{ 1,2\right\} .
\]
Note that since $\Sigma_{t}$ is orthogonal to $\Gamma$ along the
boundary and $N_{\Sigma_{t}}\cdot\partial_{2}X=0$, we have
\[
0=\left.N_{\Sigma_{t}}\cdot\nu\right|_{\partial\Sigma_{t}}=\left.N_{\Sigma_{t}}\cdot\partial_{2}\Phi\right|_{\partial\Sigma_{t}}=-\partial_{2}u\left(y_{1},0,t\right)\,\left.N_{\Sigma_{t}}\cdot\partial_{3}\Phi\right|_{\partial\Sigma_{t}},
\]
which, together with (\ref{rectified graph condition}), yields (\ref{homogeneous Neumann boundary condition}).
Thus $u\left(y,t\right)$ satisfies the homogeneous Neumann boundary
condition. Moreover, the induced metric and the second fundamental
form of $\Sigma_{t}$ are respectively given by
\begin{equation}
g_{\Sigma_{t}}:\;g_{ij}\left(y,u,\nabla u\right)=\partial_{i}X\cdot\partial_{j}X\label{induced metric}
\end{equation}
\[
=h_{ij}\left(y,u\right)+h_{i3}\left(y,u\right)\partial_{j}u+h_{j3}\left(y,u\right)\partial_{i}u+h_{33}\left(y,u\right)\partial_{i}u\,\partial_{j}u
\]
and 
\[
A_{\Sigma_{t}}:\;A_{ij}\left(y,u,\nabla u,\nabla^{2}u\right)=D_{i}\partial_{j}X\cdot N_{\Sigma_{t}}
\]
\[
=\partial_{3}\Phi\cdot N_{\Sigma_{t}}\left\{ \varGamma_{ij}^{3}\left(y,u\right)+\partial_{ij}^{2}u+Q_{ij}\left(y,u,\nabla u\right)\right\} 
\]
for $i,j\in\left\{ 1,2\right\} $, where $h_{ij}$ and $\varGamma_{ij}^{k}$
are defined in (\ref{pull-back metric}) and (\ref{connection}),
respectively, and 
\begin{equation}
Q_{ij}\left(y,u,\nabla u\right)=\varGamma_{i3}^{3}\left(y,u\right)\,\partial_{j}u+\varGamma_{j3}^{3}\left(y,u\right)\,\partial_{i}u-\varGamma_{ij}^{k}\left(y,u\right)\,\partial_{k}u\label{quadratic term}
\end{equation}
\[
+\varGamma_{33}^{3}\left(y,u\right)\,\partial_{i}u\,\partial_{j}u-\varGamma_{i3}^{k}\left(y,u\right)\,\partial_{j}u\,\partial_{k}u-\varGamma_{j3}^{k}\left(y,u\right)\,\partial_{i}u\,\partial_{k}u-\varGamma_{33}^{k}\left(y,u\right)\,\partial_{i}u\,\partial_{j}u\,\partial_{k}u.
\]
Note that by (\ref{orthogonality}), (\ref{homogeneous Neumann boundary condition})
and (\ref{induced metric}), we get
\[
\left.g_{12}\left(y,u,\nabla u\right)\right|_{y_{2}=0}=0,
\]
from which, (\ref{reflection condition}) follows immediately. 

Using the MCF equation 
\[
\partial_{t}X\cdot N_{\Sigma_{t}}=H_{\Sigma_{t}}=g^{ij}\left(y,u,\nabla u\right)A_{ij},
\]
we derive 
\[
\partial_{t}u=g^{ij}\left(y,u,\nabla u\right)\partial_{ij}^{2}u+f\left(y,u,\nabla u\right)
\]
where 
\begin{equation}
f\left(y,u,\nabla u\right)=g^{ij}\left(y,u,\nabla u\right)\left\{ \varGamma_{ij}^{3}\left(y,u\right)+Q_{ij}\left(y,u,\nabla u\right)\right\} \label{inhomogeneous term}
\end{equation}
By (\ref{approximate identity}), (\ref{pull-back metric}), (\ref{connection})
and the $C^{2}$ estimate of $u\left(y,t\right)$, it's not hard to
see that
\[
\left|g^{ij}\left(y,u,\nabla u\right)-\boldsymbol{\delta}^{ij}\right|\leq\varepsilon,\quad K^{-1}\left[g^{ij}\left(y,u,\nabla u\right)\right]_{1,\textrm{spacial}}\lesssim1,
\]
and 
\[
K^{-1}\left|f\left(y,u,\nabla u\right)\right|+K^{-2}\left[f\left(y,u,\nabla u\right)\right]_{1,\textrm{spacial}}\lesssim1.
\]
\end{proof}
The following is the reflection principle for parabolic equations
with homogeneous Neumann boundary condition. The proof is omitted
since it is based on a simple calculation.
\begin{lem}
\label{reflection principle}(Reflection Principle)

Let $u\left(x,t\right)\in C^{2}\left(B_{r}^{+}\left(O\right)\times\left(-r^{2},0\right]\right)$
be a solution of 
\[
\partial_{t}u=a^{ij}\left(x,t\right)\partial_{ij}^{2}u+f\left(x,t\right),
\]
where $r>0$ is a constant, $B_{r}^{+}\left(O\right)=\left\{ \left(x_{1},x_{2}\right)\left|\,\sqrt{x_{1}^{2}+x_{2}^{2}}<r,\,x_{2}\geq0\right.\right\} $
is the upper half ball in $\mathbb{R}^{2}$, $\left\{ a^{ij}\left(x,t\right)\right\} _{i,j\in\left\{ 1,2\right\} }$
and $f\left(x,t\right)$ are continuous functions on $B_{r}^{+}\left(O\right)\times\left(-r^{2},0\right]$,
$a^{12}\left(x,t\right)=a^{21}\left(x,t\right)$.

Suppose that
\[
\partial_{2}u\left(x_{1},0,t\right)=0\quad\textrm{and}\quad a^{12}\left(x_{1},0,t\right)=0
\]
 for $x_{1}\in\left(-r,r\right),\,t\in\left(-r^{2},0\right]$. Define
the even extension of $u\left(x,t\right)$ as
\[
\bar{u}\left(x,t\right)=\left\{ \begin{array}{c}
u\left(x,t\right),\;x_{2}\geq0\\
u\left(\tilde{x},t\right),\;x_{2}<0
\end{array}\right.,
\]
where $\tilde{x}=\widetilde{\left(x_{1},x_{2}\right)}=\left(x_{1},-x_{2}\right)$
is the reflection of $x$ with respect to the line $\left\{ x_{2}=0\right\} $
in $\mathbb{R}^{2}$. Then $\bar{u}\left(x,t\right)\in C^{2}\left(B_{r}\left(O\right)\times\left(-r^{2},0\right]\right)$
and it satisfies 
\[
\partial_{t}\bar{u}=\bar{a}^{ij}\left(x,t\right)\partial_{ij}^{2}\bar{u}+\bar{f}\left(x,t\right),
\]
where 
\[
\bar{a}^{ij}\left(x,t\right)=\left\{ \begin{array}{c}
a^{ij}\left(x,t\right),\;x_{2}\geq0\\
\left(-1\right)^{i+j}a^{ij}\left(\tilde{x},t\right),\;x_{2}<0
\end{array}\right.,\quad i,j\in\left\{ 1,2\right\} ;
\]
\[
\bar{f}\left(x,t\right)=\left\{ \begin{array}{c}
f\left(x,t\right),\;x_{2}\geq0\\
f\left(\tilde{x},t\right),\;x_{2}<0
\end{array}\right..
\]
Note that $\left\{ \bar{a}^{ij}\left(x,t\right)\right\} _{i,j\in\left\{ 1,2\right\} }$,
$\bar{f}\left(x,t\right)$ are continuous on $B_{r}\left(O\right)\times\left(-r^{2},0\right]$
and that $\bar{a}^{12}\left(x,t\right)=\bar{a}^{21}\left(x,t\right)$.
\end{lem}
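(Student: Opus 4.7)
The proof is a direct reflection argument, so the plan is to verify regularity and the PDE piece by piece. First I would establish the $C^2$ regularity of $\bar{u}$ across the interface $\{x_2=0\}$. The values match trivially. For first derivatives, $\partial_1 \bar{u}$ matches from both sides since the reflection only flips $x_2$, and $\partial_2 \bar{u}$ changes sign under reflection, so its two one-sided limits at $x_2=0$ are $\pm\partial_2 u(x_1,0,t)$, which agree by the hypothesized Neumann condition. For second derivatives, $\partial_{11}^2\bar u$ and $\partial_{22}^2\bar u$ are even in $x_2$ and match by continuity of $u$'s second derivatives; $\partial_{12}^2\bar u$ is odd and its two one-sided limits at $x_2=0$ are $\pm\partial_{12}^2 u(x_1,0,t)$. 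These agree precisely because $\partial_{12}^2 u(x_1,0,t) = \partial_1(\partial_2 u)(x_1,0,t) = 0$, obtained by tangentially differentiating $\partial_2 u(x_1,0,t)\equiv 0$.

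Next I would verify the PDE on each half. For $x_2 \geq 0$ it holds by assumption. For $x_2 < 0$, set $v(x,t)=u(\tilde x,t)$ and apply the chain rule:
\[
\partial_t v = \partial_t u(\tilde x,t), \quad \partial_{11}^2 v = \partial_{11}^2 u(\tilde x,t), \quad \partial_{22}^2 v = \partial_{22}^2 u(\tilde x,t), \quad \partial_{12}^2 v = -\partial_{12}^2 u(\tilde x,t).
\]
Substituting into $\partial_t u(\tilde x,t) = a^{ij}(\tilde x,t)\partial_{ij}^2 u(\tilde x,t) + f(\tilde x,t)$ gives
\[
\partial_t v = a^{11}(\tilde x,t)\partial_{11}^2 v - 2a^{12}(\tilde x,t)\partial_{12}^2 v + a^{22}(\tilde x,t)\partial_{22}^2 v + f(\tilde x,t),
\]
which is exactly $\bar{a}^{ij}(x,t)\partial_{ij}^2\bar u + \bar f(x,t)$ with the sign flip $(-1)^{i+j}$ absorbed into the reflected coefficients.

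Finally I would confirm continuity of $\bar a^{ij}$ and $\bar f$ on the whole ball. For $(i,j)\in\{(1,1),(2,2)\}$ and for $\bar f$, the exponent $(-1)^{i+j}$ equals $1$, so continuity across $\{x_2=0\}$ follows immediately from continuity of $a^{ij}$ and $f$. For $\bar a^{12}$, the two one-sided limits at $x_2=0$ are $\pm a^{12}(x_1,0,t)$, which agree by the hypothesis $a^{12}(x_1,0,t)=0$. Symmetry $\bar a^{12}=\bar a^{21}$ is inherited from that of $a^{ij}$.

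There is no real obstacle here; the only points requiring the two structural hypotheses are the continuity of $\partial_{12}^2\bar u$ at the interface (which needs the Neumann condition on $u$) and the continuity of $\bar a^{12}$ at the interface (which needs $a^{12}$ to vanish there). Both inputs are exactly what the statement assumes, so the verification reduces to a single application of the chain rule plus matching of one-sided limits.
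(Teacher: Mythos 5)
Your proof is correct and is precisely the ``simple calculation'' that the paper alludes to while omitting the proof: even reflection, matching of one-sided limits of the derivatives (using the Neumann condition to kill $\partial_2 u$ and $\partial^2_{12}u$ on the interface, and $a^{12}(x_1,0,t)=0$ for continuity of $\bar a^{12}$), and the chain rule to verify the reflected equation. Nothing further is needed.
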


In the next lemma, we show how to use the given the spatial bounds
and the comparison principle to derive the H$\ddot{\textrm{o}}$lder
estimate with respect to the time variables (cf. \cite{An}).
\begin{lem}
\label{temporal regularity}(Regularity in the Time Variable)

Let $u\left(x,t\right)\in C^{2}\left(B_{r}\left(O\right)\times\left(-r^{2},0\right]\right)$
satisfy 
\[
\frac{\left|u\left(x,t\right)\right|}{r}\,+\,\left|\nabla u\left(x,t\right)\right|\,+\,r\left|\nabla^{2}u\left(x,t\right)\right|\lesssim1,
\]
where $r>0$ is a constant and $B_{r}\left(O\right)$ is the open
ball in $\mathbb{R}^{2}$ centered at $O$ with radius $r$. 

Suppose that $u\left(x,t\right)$ satisfies the equation
\[
\partial_{t}u=a^{ij}\left(x,t\right)\partial_{ij}^{2}u+f\left(x,t\right),
\]
where $\left\{ a^{ij}\left(x,t\right)\right\} _{i,j\in\left\{ 1,2\right\} }$
and $f\left(x,t\right)$ are continuous functions satisfying 
\[
\frac{1}{2}\boldsymbol{\delta}^{ij}\leq a^{ij}\left(x,t\right)\leq2\boldsymbol{\delta}^{ij},\quad a^{12}\left(x,t\right)=a^{21}\left(x,t\right),\quad r\left[a^{ij}\right]_{1,spacial}\lesssim1,
\]
\[
r\left|f\right|\,+\,r^{2}\left[f\right]_{1,\textrm{spacial}}\lesssim1
\]
on $B_{r}\left(O\right)\times\left(-r^{2},0\right]$ for $i,j\in\left\{ 1,2\right\} $,
where $\left[\cdot\right]_{1,\textrm{spacial}}$ is the Lipschitz
norm with respect to the spacial variable (see Lemma \ref{parametrization of MCF near the boundary})
. Then we have 
\[
\frac{\left|\nabla u\left(x,t\right)-\nabla u\left(x,t'\right)\right|}{\left|t-t'\right|^{\frac{1}{2}}}\lesssim\frac{1}{r}
\]
for $x\in B_{\frac{r}{4}}\left(O\right)$, $0<t-t'\ll r^{2}$.
\end{lem}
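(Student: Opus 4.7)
The plan is a standard parabolic interpolation argument between the spatial $C^2$ control on $u$ and a temporal Lipschitz bound extracted from the equation itself; no maximum principle or barrier is needed. First, reading the PDE pointwise and invoking the uniform ellipticity $|a^{ij}| \lesssim 1$ together with the hypothesized bounds $|\nabla^{2} u| \lesssim 1/r$ and $|f| \lesssim 1/r$, one obtains
\[
|\partial_{t} u(x,t)| \,=\, \bigl|a^{ij}(x,t)\,\partial_{ij}^{2} u(x,t) + f(x,t)\bigr| \,\lesssim\, \frac{1}{r}
\]
on all of $B_{r}(O) \times (-r^{2}, 0]$. Integrating in time at each fixed spatial point then gives the temporal Lipschitz estimate
\[
|u(x,t) - u(x,t')| \,\lesssim\, \frac{|t-t'|}{r}.
\]

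Next, I fix $x_{0} \in B_{r/4}(O)$, a unit vector $e \in \mathbb{R}^{2}$, and a step size $0 < h \leq r/8$ to be chosen momentarily. Taylor expanding at each fixed time $s$, using $|\nabla^{2} u(\cdot,s)| \lesssim 1/r$ for the remainder, yields
\[
u(x_{0} + he,\, s) - u(x_{0},\, s) \,=\, h\,\nabla u(x_{0}, s) \cdot e + R(s), \qquad |R(s)| \lesssim \frac{h^{2}}{r}.
\]
Applying this identity at times $t$ and $t'$ and subtracting rearranges to
\[
h\,\bigl[\nabla u(x_{0}, t) - \nabla u(x_{0}, t')\bigr] \cdot e \,=\, \bigl[u(x_{0}+he, t) - u(x_{0}+he, t')\bigr] - \bigl[u(x_{0}, t) - u(x_{0}, t')\bigr] + R(t) - R(t').
\]
The first step bounds each bracket on the right by $C\,|t-t'|/r$, and the remainder contributes $O(h^{2}/r)$, so
\[
\bigl|\bigl[\nabla u(x_{0}, t) - \nabla u(x_{0}, t')\bigr] \cdot e\bigr| \,\lesssim\, \frac{|t-t'|}{h\,r} + \frac{h}{r}.
\]
Optimizing by choosing $h = |t-t'|^{1/2}$ — admissible exactly because the hypothesis $0 < t - t' \ll r^{2}$ forces $h \leq r/8$ so that $x_{0} + he$ remains in $B_{r}(O)$ — balances the two terms and produces the desired bound $|\nabla u(x_{0}, t) - \nabla u(x_{0}, t')| \lesssim |t-t'|^{1/2}/r$ after supping over $e$.

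There is no real obstacle; the only mildly delicate point is arranging that the Taylor basepoint and its translate by $he$ both lie in the domain where the $C^{2}$ estimate holds, which is what forces the restriction to $B_{r/4}(O)$ and the smallness $|t-t'| \ll r^{2}$. Worth noting: the spatial Lipschitz hypotheses on $a^{ij}$ and $f$ are not invoked in this argument, only their $L^{\infty}$ bounds; those stronger hypotheses are presumably included for use in companion higher-regularity statements, not for this particular $C^{1/2}$-in-time estimate on $\nabla u$.
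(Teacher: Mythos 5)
Your proof is correct, but it follows a genuinely different route from the paper's. You use the classical interpolation between $\left\Vert \partial_{t}u\right\Vert _{L^{\infty}}$ and $\left\Vert \nabla^{2}u\right\Vert _{L^{\infty}}$: read $\left|\partial_{t}u\right|\lesssim\frac{1}{r}$ off the equation, integrate in time to get $\left|u\left(x,t\right)-u\left(x,t'\right)\right|\lesssim\frac{\left|t-t'\right|}{r}$, and then compare Taylor expansions at the two times with step size $h=\left|t-t'\right|^{\frac{1}{2}}$. Each step checks out, including the domain bookkeeping ($x_{0}\in B_{\frac{r}{4}}\left(O\right)$ and $0<t-t'\ll r^{2}$ keep $x_{0}+he$ inside $B_{r}\left(O\right)$), and your observation that the spatial Lipschitz hypotheses on $a^{ij}$ and $f$ are never used is accurate for your argument. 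The paper instead works with the difference quotients $\triangle_{k}^{h}u$, shows via the equation (and there it does use $\left[a^{ij}\right]_{1,\textrm{spacial}}$ and $\left[f\right]_{1,\textrm{spacial}}$ to control $\left(\triangle_{k}^{h}a^{ij}\right)\partial_{ij}^{2}u+\triangle_{k}^{h}f$) that they are subsolutions of a parabolic operator up to an error $\frac{C}{r^{2}}$, and then dominates them by an explicit quadratic-in-space, linear-in-time barrier $v_{\varepsilon}$ via the comparison principle, optimizing over $\varepsilon$ at the end; this is the scheme of the cited reference of Andrews. Under the stated hypotheses — in particular the assumed pointwise bound $r\left|\nabla^{2}u\right|\lesssim1$ — your interpolation argument is shorter and needs weaker hypotheses on the coefficients; the barrier argument is the one that survives in settings where the full spatial $C^{2}$ bound is not available a priori and the temporal regularity must be extracted from the equation's structure alone. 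Both yield the stated estimate.
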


\begin{proof}
Fix $x_{0}\in B_{\frac{r}{4}}\left(O\right)$, $t_{0}<0$ with $\left|t_{0}\right|\ll r^{2}$
(to be determined), and $k\in\left\{ 1,2\right\} $. Below we will
show that 
\[
\left|\partial_{k}u\left(x_{0},t\right)-\partial_{k}u\left(x_{0},t_{0}\right)\right|\,\lesssim\,\frac{\sqrt{t-t_{0}}}{r}
\]
for $t_{0}<t\leq0$.

For each $0<h<\frac{r}{4}$, the difference quotient of $u\left(x,t\right)$
is defined by 
\[
\triangle_{k}^{h}u\left(x,t\right)=\frac{1}{h}\left(u\left(x+he_{k},t\right)-u\left(x,t\right)\right).
\]
Using the equation of $u\left(x,t\right)$ and the assumption that
\[
\left|\nabla u\left(x,t\right)\right|\,+\,r\left|\nabla^{2}u\right|\,+\,r\left[a^{ij}\right]_{1,\textrm{spacial}}\,+\,r^{2}\left[f\right]_{1,\textrm{spacial}}\lesssim1,
\]
there is $C>0$ so that 
\begin{equation}
\left|\triangle_{k}^{h}u\left(x,t\right)\right|\,\leq\,\frac{C}{4}\label{temporal regularity 1}
\end{equation}
and 
\begin{equation}
\partial_{t}\triangle_{k}^{h}u=a^{ij}\left(x+he_{k},t\right)\partial_{ij}^{2}\triangle_{k}^{h}u+\left(\triangle_{k}^{h}a^{ij}\right)\partial_{ij}^{2}u+\triangle_{k}^{h}f\label{temporal regularity 2}
\end{equation}
\[
\leq a^{ij}\left(x+he_{k},t\right)\partial_{ij}^{2}\triangle_{k}^{h}u+\frac{C}{r^{2}}.
\]
By the mean value theorem, we may assume that for the same constant
$C>0$, there holds
\begin{equation}
\triangle_{k}^{h}u\left(x,t_{0}\right)\,\leq\,\triangle_{k}^{h}u\left(x_{0},t_{0}\right)+\frac{C}{r}\left|x-x_{0}\right|\label{temporal regularity 3}
\end{equation}
\[
\leq\,\triangle_{k}^{h}u\left(x_{0},t_{0}\right)+\varepsilon+\frac{1}{4\varepsilon}\left(\frac{C}{r}\right)^{2}\left|x-x_{0}\right|^{2}\qquad\forall\,\varepsilon>0,
\]
in which we use Young's inequality to get the second line. 

For each $\varepsilon>0$, consider an auxiliary function 
\[
v_{\varepsilon}\left(x,t\right)\coloneqq\triangle_{k}^{h}u\left(x_{0},t_{0}\right)+\varepsilon+\frac{1}{4\varepsilon}\left(\frac{C}{r}\right)^{2}\left|x-x_{0}\right|^{2}+\left(\frac{2}{\varepsilon}\left(\frac{C}{r}\right)^{2}+\frac{C}{r^{2}}\right)\left(t-t_{0}\right).
\]
Obviously, by (\ref{temporal regularity 3}), we have $v_{\varepsilon}\left(x,t_{0}\right)\geq\triangle_{k}^{h}u\left(x,t_{0}\right)$
for $x\in B_{\frac{r}{2}}\left(x_{0}\right)$. Also, (\ref{temporal regularity 2})
and the condition that $a^{ij}\leq2\boldsymbol{\delta}^{ij}$ yield
\[
\left(\partial_{t}-a^{ij}\left(x+he_{k},t\right)\partial_{ij}^{2}\right)v_{\varepsilon}\,\geq\,\frac{C}{r^{2}}\,\geq\,\left(\partial_{t}-a^{ij}\left(x+he_{k},t\right)\partial_{ij}^{2}\right)\triangle_{k}^{h}u.
\]
In addition, by (\ref{temporal regularity 1}), for any $\left(x,t\right)$
satisfying $\left|x-x_{0}\right|=\frac{r}{2}$ and $t_{0}\leq t\leq0$,
we have 
\[
v_{\varepsilon}\left(x,t\right)\,\geq\,\triangle_{k}^{h}u\left(x_{0},t_{0}\right)+\varepsilon+\frac{1}{4\varepsilon}\left(\frac{C}{r}\right)^{2}\left|x-x_{0}\right|^{2}\,\geq\,\triangle_{k}^{h}u\left(x_{0},t_{0}\right)+\frac{C}{r}\left|x-x_{0}\right|
\]
\[
\geq\,-\frac{C}{4}+\frac{C}{r}\left|x-x_{0}\right|\,=\,\frac{C}{4}\,\geq\,\triangle_{k}^{h}u\left(x,t\right).
\]
The comparison principle for parabolic equations then gives
\[
\triangle_{k}^{h}u\left(x,t\right)\,\leq\,v_{\varepsilon}\left(x,t\right)
\]
for $\left|x-x_{0}\right|\leq\frac{r}{2},\,t_{0}\leq t\leq0$. In
particular, we get 
\begin{equation}
\triangle_{k}^{h}u\left(x_{0},t\right)\,\leq\,\triangle_{k}^{h}u\left(x_{0},t_{0}\right)+\varepsilon+\left(\frac{C}{r^{2}}+\frac{1}{\varepsilon}\left(\frac{C}{r}\right)^{2}\right)\left(t-t_{0}\right)\label{temporal regularity 4}
\end{equation}
for any $t\in\left[t_{0},0\right],\,\varepsilon>0$. 

For each $t\in\left(t_{0},t_{0}+\frac{r^{2}}{C^{2}}\right]$, choose
$\varepsilon=\frac{C}{r}\sqrt{t-t_{0}}$ , where $C>0$ is the aforementioned
constant. By (\ref{temporal regularity 4}), we get 
\[
\triangle_{k}^{h}u\left(x_{0},t\right)-\triangle_{k}^{h}u\left(x_{0},t_{0}\right)\,\leq\,2\frac{C}{r}\sqrt{t-t_{0}}+\frac{C}{r^{2}}\left(t-t_{0}\right)\,\leq\,\left(2C+\frac{1}{C}\right)\frac{\sqrt{t-t_{0}}}{r}.
\]
As $h\searrow0$, we get 
\[
\partial_{k}u\left(x_{0},t\right)-\partial_{k}u\left(x_{0},t_{0}\right)\,\leq\,\left(2C+\frac{1}{C}\right)\frac{\sqrt{t-t_{0}}}{r}.
\]
Lastly, choose $-\frac{r^{2}}{C^{2}}\leq t_{0}<0$ so that the above
hold for $t\in\left(t_{0},0\right]$. 

Replacing $u$ by $-u$ and following the same procedure gives
\[
-\partial_{k}u\left(x_{0},t\right)+\partial_{k}u\left(x_{0},t_{0}\right)\,\leq\,\left(2C+\frac{1}{C}\right)\frac{\sqrt{t-t_{0}}}{r}
\]
for $t_{0}<t\leq0$.
\end{proof}
With the help of Lemma \ref{reflection principle} and Lemma \ref{temporal regularity},
we can improve Lemma \ref{parametrization of MCF near the boundary}
as follows. 
\begin{prop}
\label{local graph thm}($C^{2,1}$ Estimates for MCF with Free Boundary)

The function $u\left(y,t\right)$ in Lemma \ref{parametrization of MCF near the boundary}
satisfies 
\[
K\left|u\left(y,t\right)\right|\ll1,\quad\left[\nabla^{2}u\right]_{1}+\left[\partial_{t}u\right]_{1}\,\lesssim\,K^{2}
\]
for $\left|x\right|+\left|t\right|^{\frac{1}{2}}\ll\frac{1}{K}$,
where $\left[\cdot\right]_{1}$ is the ``parabolic'' Lipschitz norm
defined by 
\[
\left[v\right]_{1}=\sup_{\left|y-y'\right|\,+\,\left|t-t'\right|>0}\,\,\frac{\left|v\left(y,t\right)-v\left(y',t'\right)\right|}{\left|y-y'\right|\,+\,\left|t-t'\right|^{\frac{1}{2}}}.
\]
\end{prop}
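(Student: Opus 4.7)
The plan is to rescale to reduce to the case $K = 1$, use the reflection principle to convert this free-boundary problem into an interior problem on a full parabolic ball, and then bootstrap via Lemma \ref{temporal regularity} to obtain the full parabolic Lipschitz estimate.

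Setting $\tilde u(y, t) = K u(y/K, t/K^2)$ produces a function on a parabolic ball of radius $\sim 1$ that solves a parabolic equation of the form \eqref{eq of graph}, still satisfies \eqref{homogeneous Neumann boundary condition} and \eqref{reflection condition}, and inherits the bounds of Lemma \ref{parametrization of MCF near the boundary} with $K = 1$. The claim $|\tilde u| \ll 1$ is immediate by choosing $\varepsilon$ there sufficiently small; it remains to prove $[\nabla^2 \tilde u]_1 + [\partial_t \tilde u]_1 \lesssim 1$ on a smaller parabolic ball. With \eqref{homogeneous Neumann boundary condition} and \eqref{reflection condition} in hand, Lemma \ref{reflection principle} produces an even extension $\bar u$ of $\tilde u$ across $\{y_2 = 0\}$ satisfying
\[
\partial_t \bar u = \bar a^{ij}(y, t)\, \partial_{ij}^2 \bar u + \bar f(y, t)
\]
on a full parabolic ball, with continuous, symmetric, uniformly elliptic, spatially Lipschitz coefficients $\bar a^{ij}$ and a bounded, spatially Lipschitz inhomogeneous term $\bar f$.

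Applying Lemma \ref{temporal regularity} to $\bar u$ gives $|\nabla \bar u(y, t) - \nabla \bar u(y, t')| \lesssim |t - t'|^{1/2}$ on an interior parabolic ball, and together with $|\nabla^2 \bar u| \lesssim 1$ this shows that $\bar u$ and $\nabla \bar u$ are parabolically Lipschitz. Since $\bar a^{ij}$ and $\bar f$ are built from smooth functions composed with $(y, \bar u, \nabla \bar u)$, they too inherit parabolic Lipschitz regularity as functions of $(y, t)$. To reach second derivatives, one iterates: the spatial difference quotient $\triangle_k^h \bar u$ satisfies
\[
\partial_t \triangle_k^h \bar u = \bar a^{ij}(y + h e_k, t)\, \partial_{ij}^2 \triangle_k^h \bar u + (\triangle_k^h \bar a^{ij})\, \partial_{ij}^2 \bar u + \triangle_k^h \bar f,
\]
whose RHS is bounded uniformly in $h$ by the just-established parabolic Lipschitz norms. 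A second application of Lemma \ref{temporal regularity} to $\triangle_k^h \bar u$, followed by $h \to 0$, yields temporal $C^{1/2}$ control on $\nabla^2 \bar u$. Viewing the equation at each fixed time as an elliptic equation $\bar a^{ij} \partial_{ij}^2 \bar u = \partial_t \bar u - \bar f$ with Lipschitz coefficients and Lipschitz right-hand side, standard elliptic regularity delivers the spatial Lipschitz bound on $\nabla^2 \bar u$, and the temporal $C^{1/2}$ bound on $\partial_t \bar u$ follows from the equation itself.

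The main obstacle is the second application of Lemma \ref{temporal regularity}: that lemma requires a uniform (in $h$) $C^2$ bound on $\triangle_k^h \bar u$, which amounts to the very spatial Lipschitz control of $\nabla^2 \bar u$ that one is trying to establish. The standard resolution is to insert an intermediate parabolic Schauder step to obtain $\bar u \in C^{2, \alpha}$ first (using the parabolic Hölder regularity of the coefficients), and then upgrade to full parabolic Lipschitz via a further difference-quotient argument; alternatively, one invokes directly a sharp Schauder-type estimate for equations with Lipschitz coefficients. Once these bounds are in place for $\bar u$, restricting to $\{y_2 \geq 0\}$ and undoing the scaling recovers the stated bounds on $u$.
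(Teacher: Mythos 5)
Your proposal is correct and follows essentially the same route as the paper: even reflection across $\left\{ y_{2}=0\right\}$ via Lemma \ref{reflection principle} (justified by (\ref{homogeneous Neumann boundary condition}) and (\ref{reflection condition})), then Lemma \ref{temporal regularity} to get parabolic Lipschitz control of $\nabla\bar{u}$ and hence of the coefficients $\bar{g}^{ij},\bar{f}$, and finally Schauder estimates for the extended equation. Your intermediate difference-quotient iteration is a detour whose circularity you correctly diagnose, and the resolution you settle on (Schauder applied to the equation with regular coefficients) is exactly what the paper does.
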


\begin{proof}
By (\ref{homogeneous Neumann boundary condition}) and (\ref{reflection condition}),
Lemma \ref{reflection principle} implies that the even extension
$\bar{u}\left(y,t\right)$ of $u\left(y,t\right)$ across $\left\{ y_{2}=0\right\} $
satisfies 
\[
\partial_{t}\bar{u}=\bar{g}^{ij}\left(y,t\right)\partial_{ij}^{2}\bar{u}+\bar{f}\left(x,t\right),
\]
where 
\[
\bar{g}^{ij}\left(y,t\right)=\left\{ \begin{array}{c}
g^{ij}\left(y,u\left(y,t\right),\nabla u\left(x,t\right)\right),\;y_{2}\geq0\\
\left(-1\right)^{i+j}g^{ij}\left(\tilde{y},u\left(\tilde{y},t\right),\nabla u\left(\tilde{y},t\right)\right),\;y_{2}<0
\end{array}\right.,\quad i,j\in\left\{ 1,2\right\} ;
\]
\[
\bar{f}\left(y,t\right)=\left\{ \begin{array}{c}
f\left(y,u\left(y,t\right),\nabla u\left(y,t\right)\right),\;y_{2}\geq0\\
f\left(\tilde{y},u\left(\tilde{y},t\right),\nabla u\left(\tilde{y},t\right)\right),\;y_{2}<0
\end{array}\right..
\]
Moreover, we have 
\[
K\left|\bar{u}\left(y,t\right)\right|\,+\,\left|\nabla\bar{u}\left(y,t\right)\right|\,+\,K^{-1}\left|\nabla^{2}\bar{u}\left(y,t\right)\right|\,\lesssim\,1,
\]
\[
\left|\bar{g}^{ij}-\boldsymbol{\delta}^{ij}\right|\ll1,\quad\left[\bar{g}^{ij}\right]_{1,spacial}\lesssim K,
\]
\[
\left|f\right|\lesssim K,\quad\left[f\right]_{1,spacial}\lesssim K^{2}.
\]
which, by Lemma \ref{temporal regularity}, implies that 
\[
\left|\nabla\bar{u}\left(y,t\right)-\nabla\bar{u}\left(y',t'\right)\right|\,\lesssim\,K\left(\left|y-y'\right|+\left|t-t'\right|^{\frac{1}{2}}\right).
\]
By definitions of $\bar{g}^{ij}\left(y,t\right)$ and $\bar{f}\left(y,t\right)$(and
interpolation inequalities), we then get
\[
\left|\bar{g}^{ij}\left(y,t\right)-\bar{g}^{ij}\left(y',t'\right)\right|\,\lesssim\,K\left(\left|y-y'\right|+\left|t-t'\right|^{\frac{1}{2}}\right),
\]
\[
\left|\bar{f}\left(y,t\right)-\bar{f}\left(y',t'\right)\right|\,\lesssim\,K^{2}\left(\left|y-y'\right|+\left|t-t'\right|^{\frac{1}{2}}\right).
\]
The conclusion follows immediately by applying Schauder estimates
to the equation of $\bar{u}\left(y,t\right)$.
\end{proof}
Lastly, we conclude this section with the fourth topic: the compactness
of the space of MCF with free boundary. Let's first give a definition
of the topology of the space of MCF with free boundary.
\begin{defn}
\label{topology}(Topology of the Space of MCF with Free Boundary)

Fix $\iota\in\left\{ 0,1\right\} $. Let $\left\{ \left\{ \Sigma_{t}^{i}\right\} _{a<t<b}\right\} _{i\in\mathbb{N}}$
be a sequence of MCF which moves freely in $\left\{ U_{i}\subset\mathbb{R}^{3}\right\} _{i\in\mathbb{N}}$
respectively. Also, let $\left\{ \Sigma_{t}\right\} _{a<t<b}$ be
a properly embedded $C^{2}$  MCF moving freely in $U\subset\mathbb{R}^{3}$.
We say that $\left\{ \Sigma_{t}^{i}\right\} $ $\textrm{converges}^{\iota}$
to $\left\{ \Sigma_{t}\right\} $ with finite multiplicity as $i\rightarrow\infty$
if the following hold.
\begin{enumerate}
\item $U_{i}$ converges to $U$ in the sense that $U$ consists of all
limit points of $U_{i}$ and $\partial U_{i}=\Gamma_{i}\stackrel{C^{3}}{\rightarrow}\Gamma=\partial U$
as $i\rightarrow\infty$;
\item For each $t_{*}\in\left(a,b\right)$, $\Sigma_{t_{*}}$ consists of
all limit points of $\left\{ \Sigma_{t_{*}}^{i}\right\} _{i\in\mathbb{N}}$.
In addition, for every $P\in\Sigma_{t_{*}}$, there exist $r>0$ and
$m\in\mathbb{N}$ so that
\end{enumerate}
\begin{itemize}
\item There is a $C^{2}$ diffeomorphism $\Phi$ from an open neighborhood
of $O$ in $\mathbb{R}^{3}$ onto $B_{r}\left(P\right)$ so that $\Phi^{-1}\left(\Sigma_{t}\cap B_{r}\left(P\right)\right)$
is a graph of a function $u\left(y,t\right)$ defined for $y\in\Omega\subset\mathbb{R}^{2}$,
$t\in\left[t_{*}-r^{2},t_{*}+\iota r^{2}\right]$, where 
\[
\Omega=\left\{ \begin{array}{c}
\left\{ \left(y_{1},y_{2}\right)\left|\,\sqrt{y_{1}^{2}+y_{2}^{2}}<r\right.\right\} ,\quad\textrm{if}\quad P\in U\\
\left\{ \left(y_{1},y_{2}\right)\left|\,\sqrt{y_{1}^{2}+y_{2}^{2}}<r,\,y_{2}\geq0\right.\right\} ,\quad\textrm{if}\quad P\in\Gamma
\end{array}\right..
\]
Note that if $P\in\Gamma$, we require $\Gamma\cap B_{r}\left(P\right)=\Phi\left\{ y_{2}=0\right\} $
and $\Phi^{-1}\left(\Sigma_{t}\cap B_{r}\left(P\right)\right)$ meets
$\left\{ y_{2}=0\right\} $ orthogonally.
\item For each $i\gg1$, there is a $C^{2}$ diffeomorphism $\Phi_{i}$
from an open neighborhood of $O$ in $\mathbb{R}^{3}$ onto $B_{r}\left(P\right)$
so that $\Phi_{i}^{-1}\left(\Sigma_{t}^{i}\cap B_{r}\left(P\right)\right)$
consists of $m$ graphs of functions
\[
\left\{ u_{i}^{1}\left(y,t\right),\cdots,u_{i}^{m}\left(y,t\right)\right\} 
\]
defined for $y\in\Omega$, $t_{*}-r^{2}\leq t\leq t_{*}+\iota r^{2}$;
moreover, if $P\in\Gamma$, $\Gamma_{i}\cap B_{r}\left(P\right)=\Phi_{i}\left\{ y_{2}=0\right\} $
and $\Phi_{i}^{-1}\left(\Sigma_{t}^{i}\cap B_{r}\left(P\right)\right)$
meets $\left\{ y_{2}=0\right\} $ orthogonally. Furthermore, as $i\rightarrow\infty$
we have 
\[
\Phi_{i}\stackrel{C^{2}}{\rightarrow}\Phi,
\]
\[
u_{i}^{j}\stackrel{C^{2}}{\rightarrow}u\qquad\forall\,j\in\left\{ 1,\cdots,m\right\} .
\]
\end{itemize}
\end{defn}

Before coming to the compactness theorem, let's introduce the following
notation in order to simplify the notation in the proof.
\begin{defn}
(Representation of a Local Graph)

Given a point $Q\in\mathbb{R}^{3}$, an orthonormal basis $\omega=\left\{ e_{1},e_{2},e_{3}\right\} $
in $\mathbb{R}^{3}$, and a function $u:\,\Omega\subset\mathbb{R}^{2}\rightarrow\mathbb{R}$,
let $\left[Q,\omega,u\right]$ be the graph of $u$ with respect to
the orientation $\omega$ and centered at $Q$. That is,
\[
\left[Q,\omega,u\right]=\left\{ \left.X\left(y_{1},y_{2}\right)=Q+y_{1}e_{1}+y_{2}e_{2}+u\left(y_{1},y_{2}\right)e_{3}\,\right|\left(y_{1},y_{2}\right)\in\Omega\right\} .
\]
\end{defn}

What follows is the the compactness theorem that will be used frequently
in the later sections (cf. \cite{PR}).
\begin{prop}
\label{compactness}(Compactness of the Space of MCF with Free Boundary)

Fix $\iota\in\left\{ 0,1\right\} $. Let $\left\{ \left\{ \Sigma_{t}^{i}\right\} _{a<t<b}\right\} _{i\in\mathbb{N}}$
be a sequence of connected, properly embedded $C^{2}$  MCF moving
freely in $\left\{ U_{i}\subset\mathbb{R}^{3}\right\} _{i\in\mathbb{N}}$
respectively. Suppose that
\begin{itemize}
\item There is $\kappa>0$ so that each $\Gamma_{i}=\partial U_{i}$ is
mean convex (i.e. the mean curvature vector of $\Gamma_{i}$ points
toward $U_{i}$) and satisfies the $\kappa-$graph condition. Note
that by passing to a subsequence, we may assume that $U_{i}\rightarrow U$
as $i\rightarrow\infty$ (in the sense that $U$ consists of all limit
points of $U_{i}$ and $\Gamma_{i}=\partial U_{i}\stackrel{C^{3}}{\rightarrow}\Gamma=\partial U$);
\item The areas and second fundamental forms of the flows are locally uniformly
$\textrm{bounded}^{\iota}$. Namely, for each $t_{*}\in\left(a,b\right)$
and $P\in\mathbb{R}^{3}$, there is $r>0$ so that for every $i\in\mathbb{N}$,
there holds
\[
\sup_{t_{*}-r^{2}\leq t\leq t_{*}+\iota r^{2}}\left(\mathcal{H}^{2}\left(\Sigma_{t}^{i}\cap B_{r}\left(P\right)\right)+\left\Vert A_{\Sigma_{t}^{i}}\right\Vert _{L^{\infty}\left(B_{r}\left(P\right)\right)}\right)\leq C\left(P,t_{*},r\right)<\infty;
\]
\item There is a time $t_{0}$ so that every subsequence of $\left\{ \Sigma_{t_{0}}^{i}\right\} _{i\in\mathbb{N}}$
has limit points which are not in $\Gamma$.
\end{itemize}
Then $\left\{ \Sigma_{t}^{i}\right\} _{t\geq t_{0}}$ $\textrm{converges}^{\iota}$
to a MCF moving freely in $U$ as $i\rightarrow\infty$ (in the sense
of Definition \ref{topology}).
\end{prop}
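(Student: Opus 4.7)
The plan is to follow a standard Arzel\`a--Ascoli strategy adapted to free boundary MCF: first extract subsequential convergence of the ambient data $\{U_i\}$, then upgrade to $C^{2}$ convergence of the evolving surfaces by representing them as local graphs with uniform $C^{2,1}$ bounds, and finally identify the limit as a MCF moving freely in $U$.

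I would begin by passing to a subsequence so that $\Gamma_i \to \Gamma$ in $C^3$. The $\kappa$-graph condition says that each $\Gamma_i$ is locally a graph of a function $\varphi_i$ with $\|\varphi_i\|_{C^{3,1}}$ bounded in terms of $\kappa$ alone, so Arzel\`a--Ascoli produces a $C^3$-convergent subsequence whose limit inherits a (slightly worse) $\kappa$-graph condition. The limit domain $U$ is the set of limit points of $\{U_i\}$, and mean convexity of $\Gamma$ is a closed condition at the $C^2$ level and hence survives. Consequently the maps $\Phi_i$ of Definition \ref{tubular nbd} built from $\Gamma_i$ converge in $C^{2,1}$ to the map $\Phi$ built from $\Gamma$, so that all later estimates expressed in these coordinates pass to the limit with the same constants.

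Next, for each fixed $t_* \in (a,b)$ I use the local uniform area and $|A|$ bounds to extract, by a diagonal argument in $(t_*, P)$, a subsequence along which $\Sigma_{t_*}^i$ converges in local Hausdorff distance to a closed set $\Sigma_{t_*}$. For any $P \in \Sigma_{t_*}$ pick $P_i \in \Sigma_{t_*}^i$ with $P_i \to P$. If $P \in U$ then $B_1(P_i) \subset U_i$ for large $i$ and Lemma \ref{Stahl gradient} shows the connected component through $P_i$ is a $C^2$ graph of small gradient on a uniform parabolic neighborhood; if $P \in \Gamma$, then (after ensuring that $\Sigma_{t_*}^i$ has boundary points near $P$, which follows from the free boundary condition and the orthogonality to $\Gamma_i$) Lemma \ref{parametrization of MCF near the boundary} parametrizes the flow as $X = \Phi_i(y_1, y_2, u_i(y_1, y_2, t))$ over a half-disk, with $u_i$ satisfying (\ref{homogeneous Neumann boundary condition}). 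Proposition \ref{local graph thm} then provides uniform $C^{2,1}$ estimates for each $u_i$, so a further diagonal subsequence yields $u_i \to u$ in $C^2$ on compact subsets. The multiplicity $m$ is finite at each $P$ because the uniform area bound combined with the lower area-ratio bound available for each graphical sheet (cf.\ Remark \ref{compensation}) allows only finitely many sheets to accumulate, giving the required list $u_i^1, \dots, u_i^m$ all converging in $C^2$ to the same limit $u$.

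It remains to verify that the local graphs assemble into a single properly embedded limit MCF moving freely in $U$. Consistency on overlaps is automatic since on each overlap region the $C^2$ limits must coincide. The MCF equation (\ref{eq of graph}) passes to the limit because its coefficients $g^{ij}, f$ depend continuously on $(\Phi_i, u_i, \nabla u_i)$ and convergence is $C^2$ in both factors; the Neumann condition (\ref{homogeneous Neumann boundary condition}) is likewise preserved under $C^2$ convergence, so the limit meets $\Gamma$ orthogonally. The third hypothesis at time $t_0$ prevents the degenerate situation in which the set of limit points is entirely contained in $\Gamma$, so that $\{\Sigma_t\}_{t \geq t_0}$ is a properly embedded MCF with free boundary rather than vacuous. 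The step I expect to be most delicate is the boundary case: one must check that the induced quantities in (\ref{induced metric})--(\ref{inhomogeneous term}), computed in the $\Gamma_i$-coordinates, converge to those computed in the $\Gamma$-coordinates, so that the limit function $u$ satisfies exactly the MCF equation for $\Gamma$ and not a perturbed one. The $C^3$ convergence $\Gamma_i \to \Gamma$ together with the $\kappa$-uniform estimates for $\Phi_i$ and $u_i$ is precisely what makes this matching work.
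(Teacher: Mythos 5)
Your overall strategy --- extracting $C^{3}$ convergence of $\Gamma_{i}$, covering the limit set by local graphs with uniform $C^{2,1}$ bounds from Proposition \ref{local graph thm}, and diagonalizing over a dense set of points and times --- is the same as the paper's. There is, however, a genuine gap in your boundary case. When $P\in\Gamma$ is a limit point of $\Sigma_{t_{*}}^{i}$, it is \emph{not} true that the sheets of $\Sigma_{t_{*}}^{i}$ accumulating at $P$ must carry boundary points near $P$: a sheet of $\Sigma_{t_{*}}^{i}$ can lie entirely in $U_{i}$, have no free boundary inside $B_{r}(P)$ at all, and still converge to a surface touching $\Gamma$. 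Your parenthetical claim that the existence of nearby boundary points ``follows from the free boundary condition and the orthogonality to $\Gamma_{i}$'' is not a valid deduction. The paper's proof splits $\Phi_{i}^{-1}\left(\Sigma_{t}^{i}\cap B_{r}(P_{i})\right)$ into $m_{i}$ half-ball graphs meeting $\left\{ y_{2}=0\right\}$ orthogonally and $m_{i}'$ full-ball graphs contained in $U_{i}$, and for the latter it invokes the strong maximum principle together with the \emph{mean convexity} of $\Gamma$ to conclude that each limiting interior sheet is either disjoint from $\Gamma$ or entirely contained in $\Gamma$; the connectivity of $\Sigma_{t}^{i}$ and the hypothesis at $t_{0}$ then exclude the second alternative. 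In your write-up the mean convexity hypothesis is recorded as ``surviving in the limit'' but never used --- a sign that this case has been skipped. Without it you cannot rule out a limit sheet touching $\Gamma$ tangentially from inside $U$, which would destroy the proper embeddedness and the free boundary structure of the limit.

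A second, smaller omission: you assert that the $m$ sheets $u_{i}^{1},\dots,u_{i}^{m}$ all converge to the \emph{same} limit $u$. Distinct sheets may converge to distinct graphs; what one actually needs (and what the paper proves via the strong maximum principle, using the reflected equation from Lemma \ref{reflection principle} and \cite{Wa} in the boundary case) is that any two limiting graphs are either disjoint or identical, so that after shrinking the ball the limit near $P$ is a single embedded graph covered with finite multiplicity. This dichotomy is not automatic from $C^{2}$ convergence and must be argued.
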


\begin{proof}
Due to the the uniform $\kappa-$graph condition (see Definition \ref{kappa graph condition}),
we may assume, by passing to a subsequence, that $U_{i}\rightarrow U$
as $i\rightarrow\infty$. Moreover, the convergence implies that $\Gamma$
is mean convex and satisfies the $\kappa-$graph condition as well.

Fix $P\in U\cup\Gamma$ and $t_{*}\in\mathbb{R}$. If $P$ is not
a limit point of $\left\{ \Sigma_{t_{*}}^{i}\right\} _{i\in\mathbb{N}}$,
by the local bound on the second fundamental form (which controls
the normal speed of the flow), we may assume, by passing to a subsequence,
that there is $r>0$ so that $\Sigma_{t}^{i}\cap B_{r}\left(P\right)=\emptyset$
for $t_{*}-r^{2}\leq t\leq t_{*}+\iota r^{2}$, $i\in\mathbb{N}$.
In case $P$ is a limit point of $\left\{ \Sigma_{t_{*}}^{i}\right\} _{i\in\mathbb{N}}$,
below we divide into two cases to consider: 
\begin{itemize}
\item \textit{Case 1}: $P\in U$;
\item \textit{Case 2}: $P\in\Gamma$.
\end{itemize}
$\mathbf{Case\;1}$ ($P\in U$): 

By passing to a subsequence, we may assume that there exist $P_{i}\in\Sigma_{t_{*}}^{i}\cap U_{i}$
which converge to $P$ as $i\rightarrow\infty$. The properly embeddedness
and the uniform bound on the second fundamental forms imply that there
exists $r>0$ with the following property. For each $i\in\mathbb{N}$,
there are a number of time-dependent local graphs 
\[
\left\{ \left[Q_{i}^{j},\:\omega_{i}^{j},\:u_{i}^{j}:B_{3r}\left(O\right)\times\left[t_{*}-9r^{2},t_{*}+9\iota r^{2}\right]\rightarrow\mathbb{R}\right]\right\} _{j=1,\cdots,m_{i}}
\]
whose union covers $\left\{ \Sigma_{t}^{i}\cap B_{r}\left(P_{i}\right)\right\} _{t_{*}-r^{2}\leq t\leq t_{*}+\iota r^{2}}$
and which are mutually disjoint in $B_{2r}\left(P_{i}\right)$ for
$t_{*}-4r^{2}\leq t\leq t_{*}+4\iota r^{2}$. The uniform bound for
the local areas yields that 
\[
\sup_{i\in\mathbb{N}}\,m_{i}\leq C\left(P,t_{*},r\right),
\]
so we may assume, by passing to a subsequence, that $m_{i}=m$ for
$i\in\mathbb{N}$. By the smooth estimate for MCF (see \cite{E} for
analogous results of Proposition \ref{local graph thm} for the interior
case), there holds
\[
\max_{1\leq j\leq m}\left\Vert u_{i}^{j}\right\Vert _{C^{2,1}}\leq C\left(P,t_{*},r\right).
\]
It follows, by passing to a subsequence, that 
\[
Q_{i}^{j}\rightarrow Q^{j},\quad\omega_{i}^{j}\rightarrow\omega^{j},\quad u_{i}^{j}\stackrel{C^{2}}{\rightarrow}u\qquad\forall\,j\in\left\{ 1,\cdots,m\right\} 
\]
as $i\rightarrow\infty$. Clearly, each limiting local graph $\left[Q^{j},\omega^{j},u^{j}\right]$
is a solution of MCF. Furthermore, every two limiting local graphs
must be either disjoint or identical by the strong maximum principle.

$\mathbf{Case\;2}$ ($P\in\Gamma$): 

Passing to a subsequence, there are $P_{i}\in\Sigma_{t_{*}}^{i}$
so that $P_{i}\rightarrow P$ as $i\rightarrow\infty$. Let $\mathring{P}_{i}$
be the closest point on $\Gamma_{i}$ to $P_{i}$, then we have $\left|\mathring{P}_{i}-P_{i}\right|\rightarrow0$
as $i\rightarrow\infty$ (since $P\in\Gamma$ and $\Gamma_{i}\rightarrow\Gamma$).
By the $\kappa-$graph condition, the uniform bound on the second
fundamental forms and Lemma \ref{parametrization of MCF near the boundary},
there exists $r>0$ with the following properties. For each $i\in\mathbb{N}$,
there is a diffeomorphism $\Phi_{i}$ (see Definition \ref{tubular nbd})
which maps from a neighborhood of $O$ in $\mathbb{R}^{3}$ onto $B_{3r}\left(\mathring{P}_{i}\right)$
and 
\[
U_{i}\cap B_{3r}\left(\mathring{P}_{i}\right)=\Phi_{i}\left\{ y_{2}>0\right\} ,\quad\Gamma_{i}\cap B_{3r}\left(\mathring{P}_{i}\right)=\Phi_{i}\left\{ y_{2}=0\right\} ;
\]
moreover, there are a number of time-dependent local graphs $\left\{ \left[Q_{i}^{j},\:\omega_{i}^{j},\:u_{i}^{j}\right]\right\} _{j=1,\cdots,m_{i}+m_{i}'},$
whose union covers $\left\{ \Phi_{i}^{-1}\left(\Sigma_{t}^{i}\cap B_{r}\left(P_{i}\right)\right)\right\} _{t_{*}-r^{2}\leq t\leq t_{*}+\iota r^{2}}$
and which are mutually disjoint in $B_{2r}\left(O\right)$ for $t_{*}-4r^{2}\leq t\leq t_{*}+4\iota r^{2}$.
The number $m_{i}$ and $m_{i}'$ means the following:
\begin{itemize}
\item For $j\in\left\{ 1,\cdots,m_{i}\right\} $, the function $u_{i}^{j}\left(x,t\right)$
is defined on $B_{3r}^{+}\left(O\right)\times\left[t_{*}-9r^{2},t_{*}+9\iota r^{2}\right]$
and $\Phi_{i}\left[Q_{i}^{j},\omega_{i}^{j},u_{i}^{j}\right]$ meets
$\Gamma_{i}$ orthogonally for $j\in\left\{ 1,\cdots,m_{i}\right\} $;
\item For $j\in\left\{ m_{i}+1,\cdots,m_{i}+m_{i}'\right\} $, the function
$u_{i}^{j}\left(x,t\right)$ is defined on $B_{3r}\left(O\right)\times\left[t_{*}-9r^{2},t_{*}+9\iota r^{2}\right]$
and $\Phi_{i}\left[Q_{i}^{j},\omega_{i}^{j},u_{i}^{j}\right]$ is
contained in $U_{i}$. 
\end{itemize}
The uniform bound on areas yields 
\[
\sup_{i\in\mathbb{N}}\,\left(m_{i}+m_{i}'\right)\leq C\left(P,t_{*},r,\kappa\right),
\]
so, by passing to a subsequence, we may assume that $m_{i}=m$ and
$m'_{i}=m'$ for $i\in\mathbb{N}$. In addition, Proposition \ref{local graph thm}
implies
\[
\max_{1\leq j\leq m+m'}\left\Vert u_{i}^{j}\right\Vert _{C^{2,1}}\leq C\left(P,t_{*},r,\kappa\right).
\]
It follows, by passing to a subsequence, that $\Phi_{i}\stackrel{C^{2}}{\rightarrow}\Phi$
and
\[
Q_{i}^{j}\rightarrow Q^{j},\quad\omega_{i}^{j}\rightarrow\omega^{j},\quad u_{i}^{j}\stackrel{C^{2}}{\rightarrow}u^{j},\qquad\forall\,j\in\left\{ 1,\cdots,m+m'\right\} 
\]
as $i\rightarrow\infty$. By the convergence, $\Phi$ is a local diffeomorphism
from a neighborhood of $O$ in $\mathbb{R}^{3}$ to $B_{2r}\left(P\right)$
and 
\[
\Phi^{-1}\left(U\cap B_{2r}\left(P\right)\right)\subset\left\{ \left.\left(y_{1},y_{2},y_{3}\right)\right|\,y_{2}>0\right\} ,
\]
\[
\Phi^{-1}\left(\Gamma\cap B_{2r}\left(P\right)\right)\subset\left\{ \left.\left(y_{1},y_{2},y_{3}\right)\right|\,y_{2}=0\right\} .
\]
By the comparison principle (more precisely, using (\ref{eq of graph}),
Lemma \ref{reflection principle} and \cite{Wa}), every two limiting
graphs must be either disjoint or identical. In addition, each $\Phi\left[Q^{j},\omega^{j},u^{j}\right]$
is clearly a solution of MCF. For $j\in\left\{ 1,\cdots,m\right\} $,
$\Phi\left[Q^{j},\omega^{j},u^{j}\right]$ meets $\Gamma$ orthogonally.
For $j\in\left\{ m+1,\cdots,m+m'\right\} $, due to the strong maximum
principle and the mean convexity of $\Gamma$, either $\Phi\left[Q^{j},\omega^{j},u^{j}\right]$
and $\Gamma$ are disjoint, or $\Phi\left[Q^{j},\omega^{j},u^{j}\right]\subset\Gamma$. 

Lastly, take a countable dense subset $\left\{ \left(P_{k},t_{k}\right)\right\} _{k\in\mathbb{N}}$
of $\left(U\cup\Gamma\right)\times\left[t_{0},\infty\right)$. Applying
the above argument for the sequence of points one by one successively
and using Cantor's diagonal argument, we can extract a subsequence
as claimed. Note that by the above argument, if the limiting flow
$\left\{ \Sigma_{t}\right\} $ intersects $\Gamma$ at interior points
for $t_{1}\geq t_{0}$, then $\Sigma_{t}\subset\Gamma$ by the strong
maximum principle for all $t\in\left[t_{0},t_{1}\right]$, which contradicts
with the assumption that $\Sigma_{t_{0}}$ is not contained in $\Gamma$.
\end{proof}

\section{\label{Li-Wang Pseudolocality Theorem}Area Ratio and Curvature Estimates
for MCF }

In this section we follow closely the ideas in \cite{LW} to estimate
the area ratio and second fundamental form of MCF with free boundary
and uniformly bounded mean curvature. It consists of the following
two parts.
\begin{enumerate}
\item The area ratio of a surface in a sufficiently small ball (which would
be modified near the boundary, see Lemma \ref{unit area ratio of surface}
for instance) stays close to one along MCF with free boundary and
uniformly bounded mean curvature (see Proposition \ref{unit area ratio of MCF}). 
\item Smallness of the $L^{2}$ norm of the initial second fundamental form
yields the bound on the second fundamental form for MCF with free
boundary and uniformly bounded mean curvature (see Proposition \ref{small energy implies regularity}).
\end{enumerate}
The proof of the first part proceeds as follows. We first show that
the modified area ratio for the initial surface $\Sigma_{0}$ is close
to one, provided that the radius is sufficiently small (depending
on the curvatures of $\Sigma$ and the boundary support surface $\Gamma$).
Then we show that the area of $\Sigma_{t}$ change slightly (forward
and backward) in time if the mean curvature stays uniformly bounded.
Lastly, by appealing to the monotonicity of area ratio for each $\Sigma_{t}$,
we show that the modified area ratio stays close to one along the
flow.

Recall that in (\ref{connected component}), we denote by $\left(\Sigma\right)_{P}$
the path-connected component of a surface $\Sigma$ containing $P$.
Also, $\tilde{B}_{r}\left(P\right)$ stands for the reflection of
$B_{r}\left(P\right)\setminus U$ with respect to $\Gamma$ (see Definition
\ref{complementary ball}). 
\begin{lem}
\label{unit area ratio of surface}Given $\varepsilon>0$, there exists
$\delta>0$ with the following property. Let $\Sigma$ be a properly
embedded $C^{2}$  surface in $U\subset\mathbb{R}^{3}$ which meets
$\Gamma=\partial U$ orthogonally. Suppose that
\begin{itemize}
\item Either $B_{1}\left(O\right)\subset U$ , or $O\in\Gamma$ and $\Gamma$
satisfies the $\kappa-$graph condition for some $0<\kappa\leq1$;
\item There is $K\geq1$ so that $\left(\Sigma\cap B_{\frac{\delta}{K}}\left(P\right)\right)_{P}$
is a $\delta-$Lipschitz graph for any $P\in\Sigma\cap B_{\frac{1}{2}}\left(O\right)$
(which holds, for instance, when $\left\Vert A_{\Sigma}\right\Vert _{L^{\infty}\left(B_{1}\left(O\right)\right)}\leq K$).
\end{itemize}
Then for any $P\in\Sigma\cap B_{\delta}\left(O\right)$ and $\rho\in\left[0,\frac{\delta}{K}\right)$,
we have
\[
\frac{\mathcal{H}^{2}\left(\Sigma\cap B_{\rho}\left(P\right)\right)_{P}+\mathcal{H}^{2}\left(\left(\Sigma\cap B_{\rho}\left(P\right)\right)_{P}\cap\tilde{B}_{\rho}\left(P\right)\right)}{\pi\rho^{2}}\,\leq\,1+\varepsilon.
\]
 
\end{lem}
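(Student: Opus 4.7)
The plan is to split into the interior case and the boundary case, reducing the boundary case to the flat half-space picture of Remark~\ref{compensation} via the diffeomorphism $\Phi$ of Definition~\ref{tubular nbd}.

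In the interior case $B_{1}(O)\subset U$, for $\delta$ small enough (say $\delta<\frac{1}{4}$) one has $B_{\rho}(P)\subset B_{\delta+\rho}(O)\subset U$, so $\tilde{B}_{\rho}(P)=\emptyset$ and only the first term is present. Since $(\Sigma\cap B_{\delta/K}(P))_{P}$ is a $\delta$-Lipschitz graph over the tangent plane $T_{P}\Sigma$, orthogonal projection onto $T_{P}\Sigma$ decreases area by at most a factor $\sqrt{1+\delta^{2}}$, and the projection lands inside a disk of radius $\rho$, so $\mathcal{H}^{2}((\Sigma\cap B_{\rho}(P))_{P})\le\sqrt{1+\delta^{2}}\,\pi\rho^{2}\le(1+\varepsilon)\pi\rho^{2}$.

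For the boundary case $O\in\Gamma$, I would pull everything back by $\Phi$. Set $\hat{\Sigma}=\Phi^{-1}(\Sigma\cap B_{\kappa^{-1}}(O))$ and $\hat{P}=\Phi^{-1}(P)$. The boundary identities (\ref{orthogonality}), which say $h_{i2}|_{y_{2}=0}=\boldsymbol{\delta}_{i2}$, mean that the orthogonality of $\Sigma$ with $\Gamma$ transfers under $\Phi$ to Euclidean orthogonality of $\hat{\Sigma}$ with $\{y_{2}=0\}$; consequently the even reflection of $\hat{\Sigma}$ across $\{y_{2}=0\}$ is $C^{1}$. By (\ref{reflection preserving}), $\Phi$ commutes with reflection, so $\Phi^{-1}$ sends $\tilde{B}_{\rho}(P)$ onto the Euclidean reflection (across $\{y_{2}=0\}$) of $\Phi^{-1}(B_{\rho}(P))\cap\{y_{2}<0\}$. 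By (\ref{approximate identity}) and $P\in B_{\delta}(O)$, $\rho<\delta/K\le\delta$, on the relevant neighborhood one has $|D\Phi-I|+|\det D\Phi-1|\lesssim\kappa\delta$, so $\hat{\Sigma}$ is locally a Lipschitz graph over its tangent plane at $\hat{P}$ with constant at most $(1+C\kappa\delta)\delta$, Euclidean areas differ from their pull-backs by a factor in $[1-C\kappa\delta,\,1+C\kappa\delta]$, and $\Phi^{-1}(B_{\rho}(P))$ is pinched between concentric Euclidean balls about $\hat{P}$ of radii $(1\pm C\kappa\delta)\rho$.

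With these ingredients I would apply Remark~\ref{compensation} to the doubled surface in the half-space, bounding the half-space modified area ratio by $\sqrt{1+((1+C\kappa\delta)\delta)^{2}}$, and then undo the three distortions (Jacobian, ball distortion, Lipschitz distortion) to obtain
\[
\mathcal{H}^{2}\bigl((\Sigma\cap B_{\rho}(P))_{P}\bigr)+\mathcal{H}^{2}\bigl((\Sigma\cap B_{\rho}(P))_{P}\cap\tilde{B}_{\rho}(P)\bigr)\le\bigl(1+C(\delta+\kappa\delta)\bigr)\pi\rho^{2},
\]
which is at most $(1+\varepsilon)\pi\rho^{2}$ once $\delta$ is chosen small (recall $0<\kappa\le 1$). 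The main obstacle is the careful bookkeeping of these multiplicative perturbations, in particular verifying that the pinching of $\Phi^{-1}(B_{\rho}(P))$ between two concentric Euclidean balls is compatible with the reflection across $\{y_{2}=0\}$ so that the complementary ball contribution is also controlled by the flat model. The key structural inputs are (\ref{approximate identity}), (\ref{reflection preserving}), and (\ref{orthogonality}); together they reduce the curved-$\Gamma$ problem to Remark~\ref{compensation} up to an $O(\delta)$ distortion.
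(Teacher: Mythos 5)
Your proof is correct and follows essentially the same route as the paper's (very terse) argument: the case with empty complementary ball is handled directly by the $\delta$-Lipschitz graph condition, and the boundary case is reduced to the flat half-space model of Remark \ref{compensation} by pulling back under $\Phi$ and controlling the $O(\kappa\delta)$ distortions via (\ref{approximate identity}) and (\ref{reflection preserving}). The bookkeeping you flag as the main obstacle is exactly what the paper leaves implicit, and your outline of it is sound.
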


\begin{proof}
Fix $P\in\Sigma$ and $\rho>0$. If $\rho\leq d_{\Gamma}\left(P\right)$,
then $\tilde{B}_{\rho}\left(P\right)=\emptyset$ (see Definition \ref{complementary ball})
and the conclusion follows directly from the small gradient graph
condition. In the case when $\rho>d_{\Gamma}\left(P\right)$, one
can first apply the map $\Phi$ in Definition \ref{tubular nbd} to
pull back $\Sigma$ to a half-space and then use (\ref{approximate identity}),
(\ref{reflection preserving}), Remark \ref{compensation} and the
small gradient graph condition to get the conclusion.
\end{proof}
Below we show that the area changes slightly along MCF within a time
which is inversely proportional to the mean curvature.
\begin{lem}
\label{pre unit area ratio preserving}Given $\varepsilon>0$, there
exists $\delta>0$ with the following property. Let $\left\{ \Sigma_{t}\right\} _{a\leq t\leq b}$
be a properly embedded $C^{2}$  MCF which moves freely in $U\subset\mathbb{R}^{3}$
and has the following parametrization:
\[
X_{t}=X\left(\cdot,t\right):M^{2}\times\left[a,b\right]\rightarrow U\subset\mathbb{R}^{3},\qquad\partial_{t}X_{t}=\overrightarrow{H}_{\Sigma_{t}},
\]
where $a\leq0\leq b$ are constants. Suppose that 
\begin{itemize}
\item Either $B_{1}\left(O\right)\subset U$ , or $O\in\Gamma=\partial U$
and $\Gamma$ satisfies the $\kappa-$graph condition for some $0<\kappa\leq1$;
\item There is $0<\Lambda\leq1$ so that $\sup_{a\leq t\leq b}\left\Vert H_{\Sigma_{t}}\right\Vert _{L^{\infty}\left(B_{1}\left(O\right)\right)}\leq\Lambda$.
\end{itemize}
Then for any $\rho\in\left(0,\delta\right]$, $t\in\left[\breve{a},\breve{b}\right]$
and $P\in\Sigma_{t}\cap B_{\delta}\left(O\right)$, we have
\[
\frac{\mathcal{H}^{2}\left(\Sigma_{t}\cap B_{\rho}\left(P\right)\right)_{P}}{\pi\rho^{2}}\,\leq\,e^{\Lambda^{2}\left|t\right|}\left(1+\frac{2\Lambda\left|t\right|}{\rho}\right)^{2}\,\frac{\mathcal{H}^{2}\left(\Sigma_{0}\cap B_{\rho+2\Lambda\left|t\right|}\left(P_{0}\right)\right)_{P_{0}}}{\pi\left(\rho+2\Lambda\left|t\right|\right)^{2}},
\]

\[
\frac{\mathcal{H}^{2}\left(\left(\Sigma_{t}\cap B_{\rho}\left(P\right)\right)_{P}\cap\tilde{B}_{\rho}\left(P\right)\right)}{\pi\rho^{2}}
\]
\[
\leq\,e^{\Lambda^{2}\left|t\right|}\left(1+\frac{2\left(1+\varepsilon\right)\Lambda\left|t\right|}{\rho}\right)^{2}\,\frac{\mathcal{H}^{2}\left(\left(\Sigma_{0}\cap B_{\rho+2\left(1+\varepsilon\right)\Lambda\left|t\right|}\left(P_{0}\right)\right)_{P_{0}}\cap\tilde{B}_{\rho+2\left(1+\varepsilon\right)\Lambda\left|t\right|}\left(P_{0}\right)\right)}{\pi\left(\rho+2\left(1+\varepsilon\right)\Lambda\left|t\right|\right)^{2}},
\]
where 
\[
\breve{a}=\max\left\{ a,\,-\frac{\delta}{\Lambda}\right\} ,\quad\breve{b}=\min\left\{ b,\,\frac{\delta}{\Lambda}\right\} ,\quad P_{0}=X_{0}\circ X_{t}^{-1}\left(P\right).
\]
\end{lem}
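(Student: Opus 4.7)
The plan is to combine two elementary facts along the MCF with the stated parametrization $\partial_{t}X_{t}=\overrightarrow{H}_{\Sigma_{t}}$. The first variation formula gives $\partial_{t}\,d\mu_{t}=-H_{\Sigma_{t}}^{2}\,d\mu_{t}$, which under the hypothesis $|H_{\Sigma_{t}}|\leq\Lambda$ integrates (in both time directions) to $d\mu_{t}\leq e^{\Lambda^{2}|t|}\,d\mu_{0}$. Simultaneously $|\partial_{t}X_{t}|=|\overrightarrow{H}_{\Sigma_{t}}|\leq\Lambda$, so every trajectory satisfies $|X_{t}(p)-X_{0}(p)|\leq\Lambda|t|$, and in particular $|P-P_{0}|\leq\Lambda|t|$. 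A standard continuation argument, using that $\delta$ is small, confirms that every trajectory used in the proof stays inside $B_{1}(O)$, so the curvature bound $\Lambda$ applies along it.

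For the first inequality, I would set $S_{t}=(\Sigma_{t}\cap B_{\rho}(P))_{P}$ and pull it back to $\mathcal{U}_{t}=X_{t}^{-1}(S_{t})\subset M^{2}$, which is path-connected. The trajectory bound together with $|P-P_{0}|\leq\Lambda|t|$ places $X_{0}(\mathcal{U}_{t})$ inside $B_{\rho+2\Lambda|t|}(P_{0})$; being a continuous image of a path-connected set and containing $P_{0}$, it must lie in $(\Sigma_{0}\cap B_{\rho+2\Lambda|t|}(P_{0}))_{P_{0}}$. The measure estimate $d\mu_{t}\leq e^{\Lambda^{2}|t|}d\mu_{0}$ then yields
\[
\mathcal{H}^{2}(S_{t})\,\leq\,e^{\Lambda^{2}|t|}\,\mathcal{H}^{2}\bigl((\Sigma_{0}\cap B_{\rho+2\Lambda|t|}(P_{0}))_{P_{0}}\bigr),
\]
which rearranges into the stated area-ratio form via the identity $(1+2\Lambda|t|/\rho)^{2}/(\rho+2\Lambda|t|)^{2}=1/\rho^{2}$.

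For the second inequality, the new ingredient is a Lipschitz bound for the reflection $R(X)=X-2d_{\Gamma}(X)\nabla d_{\Gamma}(X)$ evaluated on trajectories. Its derivative is
\[
DR(X)=\bigl(I-2\nabla d_{\Gamma}\otimes\nabla d_{\Gamma}\bigr)-2\,d_{\Gamma}(X)\,\nabla^{2}d_{\Gamma}(X);
\]
the first summand is an isometric reflection across the tangent plane to $\Gamma$ at the nearest point, while the second has operator norm at most $2\,d_{\Gamma}\,|\nabla^{2}d_{\Gamma}|\,\lesssim\,\kappa\delta\leq\delta$ on $B_{\delta}(O)$ by the $\kappa$-graph condition with $\kappa\leq1$. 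Choosing $\delta$ sufficiently small depending on $\varepsilon$, $R$ becomes $(1+\varepsilon)$-Lipschitz on the relevant tubular region, so $|\tilde{X}_{t}-\tilde{X}_{0}|=|R(X_{t})-R(X_{0})|\leq(1+\varepsilon)\Lambda|t|$. Now replay the previous argument with $S_{t}\cap\tilde{B}_{\rho}(P)$ in place of $S_{t}$: the constraint $X_{t}\in B_{\rho}(P)$ again gives $X_{0}\in B_{\rho+2\Lambda|t|}(P_{0})\subset B_{\rho+2(1+\varepsilon)\Lambda|t|}(P_{0})$, while the constraint $\tilde{X}_{t}\in B_{\rho}(P)$ gives $|\tilde{X}_{0}-P_{0}|\leq\rho+(1+\varepsilon)\Lambda|t|+\Lambda|t|\leq\rho+2(1+\varepsilon)\Lambda|t|$, hence $X_{0}\in\tilde{B}_{\rho+2(1+\varepsilon)\Lambda|t|}(P_{0})$. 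Path-connectedness is preserved as before, and the area-element bound concludes the argument.

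The principal technical point is the $(1+\varepsilon)$-Lipschitz estimate for $R$, which sets the dependence of $\delta$ on $\varepsilon$ and $\kappa$; everything else, including the bootstrap keeping trajectories in $B_{1}(O)$ and the verification that the reflection is well-defined on the relevant tubular neighborhood, is routine once $\delta\ll\kappa^{-1}$.
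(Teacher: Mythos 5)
Your proposal is correct and follows essentially the same route as the paper: bound $|X_{t}(p)-X_{0}(p)|\leq\Lambda|t|$ and relative displacements by $2\Lambda|t|$ to get the ball/component inclusions, use $\partial_{t}d\mu_{t}=-H_{\Sigma_{t}}^{2}d\mu_{t}$ for the factor $e^{\Lambda^{2}|t|}$, and for the complementary-ball term use that the reflection across $\Gamma$ is $(1+\varepsilon)$-Lipschitz on a small tubular neighborhood. The only (harmless) difference is that you justify the Lipschitz bound directly from $DR=(I-2\nabla d_{\Gamma}\otimes\nabla d_{\Gamma})-2d_{\Gamma}\nabla^{2}d_{\Gamma}$, whereas the paper deduces it from the chart $\Phi$ via its approximate-identity and reflection-preserving properties.
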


\begin{proof}
Given $t$ and $P\in\Sigma_{t}$, let $P_{0}=X_{0}\circ X_{t}^{-1}\left(P\right)$.
Since 
\[
\partial_{t}\left|X\left(p,t\right)-X\left(p,0\right)\right|\leq\Lambda,\quad\partial_{t}\left|X\left(p,t\right)-X\left(q,t\right)\right|\leq2\Lambda
\]
for any $p,q\in M$, we get $\left|P-P_{0}\right|\leq\Lambda\left|t\right|$
and
\[
X_{t}^{-1}\left(\Sigma_{t}\cap B_{\rho}\left(P\right)\right)_{P}\,\subset\,X_{0}^{-1}\left(\Sigma_{0}\cap B_{\rho+2\Lambda\left|t\right|}\left(P_{0}\right)\right)_{P_{0}}.
\]
Moreover, by the evolution formula $\partial_{t}\,d\mu_{t}=-H_{\Sigma_{t}}^{2}d\mu_{t}$,
where $d\mu_{t}$ is the induced measure of $\Sigma_{t}$ on $M$,
it follows that
\[
\mathcal{H}^{2}\left(\Sigma_{t}\cap B_{R}\left(P\right)\right)_{P}\,\leq\,\mathcal{H}^{2}\left(X_{t}\circ X_{0}^{-1}\left(\Sigma_{0}\cap B_{R+2\Lambda\left|t\right|}\left(P_{0}\right)\right)_{P_{0}}\right)
\]
\[
\leq e^{\Lambda^{2}\left|t\right|}\mathcal{H}^{2}\left(\Sigma_{0}\cap B_{R+2\Lambda\left|t\right|}\left(P_{0}\right)_{P_{0}}\right)
\]
This proves the first inequality.

Likewise, for the second inequality, it suffices to show that
\[
X_{t}^{-1}\left(\left(\Sigma_{t}\cap B_{\rho}\left(P\right)\right)_{P}\cap\tilde{B}_{\rho}\left(P\right)\right)\,\subset\,X_{0}^{-1}\left(\left(\Sigma_{0}\cap B_{\rho+2\left(1+\varepsilon\right)\Lambda\left|t\right|}\left(P_{0}\right)\right)_{P_{0}}\cap\tilde{B}_{\rho+2\left(1+\varepsilon\right)\Lambda\left|t\right|}\left(P_{0}\right)\right).
\]
To see that, fix $Q\in\left(\Sigma_{t}\cap B_{\rho}\left(P\right)\right)_{P}\cap\tilde{B}_{\rho}\left(P\right)$
and let $Q_{0}=X_{0}\circ X_{t}^{-1}\left(Q\right)$. Notice that
$\tilde{Q}\in B_{\rho}\left(P\right)$ (see Definition \ref{complementary ball})
and $Q_{0}\in\left(\Sigma_{0}\cap B_{\rho+2\Lambda\left|t\right|}\left(P_{0}\right)\right)_{P_{0}}$.
Given $\varepsilon>0$, by (\ref{approximate identity}) and (\ref{reflection preserving}),
there is $\delta>0$ so that
\[
\left|\widetilde{Q_{0}}-\tilde{Q}\right|\,\leq\,\left(1+\varepsilon\right)\left|Q_{0}-Q\right|
\]
as long as $P\in B_{\delta}\left(O\right)$, $0<\rho\leq\delta$ and
$\Lambda\left|t\right|\leq\delta$. It follows that 
\[
\widetilde{Q_{0}}\,\in\,B_{\rho+\left(1+\varepsilon\right)\Lambda\left|t\right|}\left(P\right)\,\subset\,B_{\rho+2\left(1+\varepsilon\right)\Lambda\left|t\right|}\left(P_{0}\right),
\]
which, by Definition \ref{complementary ball}, yields $Q_{0}\in\tilde{B}_{\rho+2\left(1+\varepsilon\right)\Lambda\left|t\right|}\left(P_{0}\right)$.
\end{proof}
The following is an immediate consequence of Lemma \ref{pre unit area ratio preserving}. 
\begin{cor}
\label{unit area ratio preserving}Given $\varepsilon>0$, there exists
$\delta>0$ with the following property. Let $\left\{ \Sigma_{t}\right\} _{a\leq t\leq b}$
be as in Lemma \ref{pre unit area ratio preserving}. Then for any
$\rho\in\left(0,\delta\right]$, $t\in\left[\breve{a},\breve{b}\right]$
and $P\in\Sigma_{t}\cap B_{\delta}\left(O\right)$, there holds
\[
\frac{\mathcal{H}^{2}\left(\Sigma_{t}\cap B_{\rho}\left(P\right)\right)_{P}+\mathcal{H}^{2}\left(\left(\Sigma_{t}\cap B_{\rho}\left(P\right)\right)_{P}\cap\tilde{B}_{\rho}\left(P\right)\right)}{\pi\rho^{2}}
\]
\[
\leq\left(1+\varepsilon\right)\frac{\mathcal{H}^{2}\left(\Sigma_{0}\cap B_{\left(1+\delta\right)\rho}\left(P_{0}\right)\right)_{P_{0}}+\mathcal{H}^{2}\left(\left(\Sigma_{0}\cap B_{\left(1+\delta\right)\rho}\left(P_{0}\right)\right)_{P_{0}}\cap\tilde{B}_{\left(1+\delta\right)\rho}\left(P_{0}\right)\right)}{\pi\left(\left(1+\delta\right)\rho\right)^{2}},
\]
where 
\[
\breve{a}=\max\left\{ a,\,-\frac{\delta\rho}{\Lambda}\right\} ,\quad\breve{b}=\min\left\{ b,\,\frac{\delta\rho}{\Lambda}\right\} ,\quad P_{0}=X_{0}\circ X_{t}^{-1}\left(P\right).
\]
\end{cor}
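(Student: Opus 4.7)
The corollary follows from Lemma \ref{pre unit area ratio preserving} by summing the two bounds there and absorbing the small multiplicative factors into $(1+\varepsilon)$, using that the $\mathcal{H}^2$ quantities on the right are monotone in radius.

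My plan: Given $\varepsilon>0$, first fix an auxiliary $\varepsilon_0 > 0$ much smaller than $\varepsilon$, and apply Lemma \ref{pre unit area ratio preserving} with parameter $\varepsilon_0$ to obtain its threshold $\delta_0$. Then choose the corollary's $\delta$ small enough that $\delta \leq \delta_0$ and that the prefactors described below are bounded by $\sqrt{1+\varepsilon}$. Under the corollary's hypotheses $\rho\leq\delta$ and $|t|\leq \delta\rho/\Lambda$, the hypotheses of the lemma are met: in particular $|t|\leq \delta^2/\Lambda \leq \delta_0/\Lambda$ since $\Lambda\leq 1$, and $P_0=X_0\circ X_t^{-1}(P)$ is the same point in both statements.

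Writing $R_1=\rho+2\Lambda|t|$ and $R_2=\rho+2(1+\varepsilon_0)\Lambda|t|$, Lemma \ref{pre unit area ratio preserving} gives
\[
\frac{\mathcal{H}^2(\Sigma_t\cap B_\rho(P))_P}{\pi\rho^2} \leq e^{\Lambda^2|t|}\Bigl(1+\tfrac{2\Lambda|t|}{\rho}\Bigr)^2 \frac{\mathcal{H}^2(\Sigma_0\cap B_{R_1}(P_0))_{P_0}}{\pi R_1^2},
\]
together with its analogue for the reflected piece at radius $R_2$. Since $\Lambda|t|/\rho \leq \delta$, we have $R_1\leq R_2 \leq (1+2(1+\varepsilon_0)\delta)\rho$, which after harmlessly relabeling (shrinking the working $\delta$ by the factor $2(1+\varepsilon_0)$) becomes $\leq (1+\delta)\rho$. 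The prefactors $e^{\Lambda^2|t|}(1+2\alpha\Lambda|t|/\rho)^2$ for $\alpha\in\{1,1+\varepsilon_0\}$ are likewise bounded by $e^{\delta}(1+2(1+\varepsilon_0)\delta)^2$, which is $\leq \sqrt{1+\varepsilon}$ once $\delta$ is sufficiently small.

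It remains to use the monotonicity of the maps $r\mapsto \mathcal{H}^2(\Sigma_0\cap B_r(P_0))_{P_0}$ and $r\mapsto \mathcal{H}^2((\Sigma_0\cap B_r(P_0))_{P_0}\cap\tilde{B}_r(P_0))$ in $r$ (both are nondecreasing, as enlarging $r$ enlarges both $B_r$ and $\tilde{B}_r$) to replace $R_1$ and $R_2$ by the common radius $(1+\delta)\rho$ on the right. Summing the two inequalities, dividing through by $\pi((1+\delta)\rho)^2$ and applying the prefactor bounds yields the claimed estimate. I do not anticipate any substantive obstacle here; the only mild subtlety is the double role of $\delta$ in the time constraint and in the final radius enlargement, which is handled by the harmless relabeling above.
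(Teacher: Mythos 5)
Your approach is the intended one: the paper gives no proof at all (it calls the corollary ``an immediate consequence'' of Lemma \ref{pre unit area ratio preserving}), and summing the two estimates, passing to a common radius by monotonicity of the two set functions in $r$, and absorbing the prefactors is exactly what is meant. Two of your steps deserve comment. First, a bookkeeping point: when you replace the numerator at radius $R_i$ by the numerator at radius $(1+\delta)\rho$ you must also swap the denominator $\pi R_i^2$ for $\pi((1+\delta)\rho)^2$, which costs an extra factor $\left((1+\delta)\rho/R_i\right)^2\leq(1+\delta)^2$; this is harmless but should be included in the prefactor you absorb into $1+\varepsilon$.

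Second, and more substantively, the ``harmless relabeling'' does not work as described. The mismatch between the radius enlargement $2(1+\varepsilon_0)\Lambda\left|t\right|\leq 2(1+\varepsilon_0)\delta\rho$ and the target enlargement $\delta\rho$ is a fixed multiplicative factor $2(1+\varepsilon_0)$, so it is scale-invariant in $\delta$: if you shrink $\delta$ uniformly everywhere, the hypothesis still only gives $R_2\leq\left(1+2(1+\varepsilon_0)\delta\right)\rho$, which exceeds $(1+\delta)\rho$. Nor can you repair this afterwards with Lemma \ref{monotonicity of area ratio}, since that lemma bounds the modified area ratio at the \emph{smaller} radius by the one at the \emph{larger} radius, which is the wrong direction here. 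What your argument actually proves is the statement with the time window $\left|t\right|\leq\frac{\delta\rho}{2(1+\varepsilon_0)\Lambda}$ (or, equivalently, with $(1+2(1+\varepsilon_0)\delta)\rho$ in place of $(1+\delta)\rho$ on the right). This discrepancy is already baked into the paper's own statement of the corollary, where the same $\delta$ governs both the time window and the radius enlargement, and it is immaterial for the only use of the corollary (Proposition \ref{unit area ratio of MCF}), where any fixed small constants in the two places suffice. So your proof is correct in substance, but you should state it with two constants rather than pretending a single rescaling of $\delta$ closes the gap.
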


There is one drawback of Corollary \ref{unit area ratio preserving},
which is that the time-span $\left[\breve{a},\breve{b}\right]$ depends
on the radius $\rho$ and degenerates as $\rho\searrow0$. To fix
the problem, we appeal to the following lemma from \cite{GJ}. Loosely
speaking, it says that the modified area ratio of a surface with free
boundary is non-decreasing in radius provided that its mean curvature
(and also the curvature of the boundary support surface) is bounded.
\begin{lem}
\label{monotonicity of area ratio}(Monotonicity of Area Ratio)

There is a universal constant $C>0$ with the following property.
Let $\Sigma$ be a properly embedded $C^{2}$ surface in $U\subset\mathbb{R}^{3}$
which meets $\Gamma=\partial U$ orthogonally. Suppose that 
\begin{itemize}
\item Either $B_{1}\left(O\right)\subset U$, or $O\in\Gamma$ and $\Gamma$
satisfies the $\kappa-$graph condition for some $0<\kappa\leq1$;
\item There is $\Lambda\geq0$ so that $\left\Vert H_{\Sigma}\right\Vert _{L^{\infty}\left(B_{1}\left(O\right)\right)}\leq\Lambda$.
\end{itemize}
Then for any $P\in\Sigma\cap B_{1}\left(O\right)$, the function
\[
r\,\mapsto\,e^{C\left(\Lambda+\kappa\right)r}\left(\frac{\mathcal{H}^{2}\left(\Sigma\cap B_{r}\left(P\right)\right)_{P}+\mathcal{H}^{2}\left(\left(\Sigma\cap B_{r}\left(P\right)\right)_{P}\cap\tilde{B}_{r}\left(P\right)\right)}{\pi r^{2}}\right)
\]
is non-decreasing for $0<r\leq1-\left|P\right|$.
\end{lem}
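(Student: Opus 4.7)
The plan is to adapt the classical Euclidean monotonicity formula, which comes from the first variation of area for surfaces with $L^\infty$-bounded mean curvature, to the free boundary setting via a reflection argument. I would split into two cases depending on whether $B_r(P) \subset U$ or $B_r(P) \cap \Gamma \neq \emptyset$.

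In the interior case, we have $\tilde{B}_r(P) = \emptyset$ by Definition \ref{complementary ball}, so the quantity reduces to $e^{C\Lambda r}\mathcal{H}^2(\Sigma \cap B_r(P))_P / (\pi r^2)$. The standard monotonicity inequality, obtained by testing the first variation identity $\int_\Sigma \mathrm{div}_\Sigma Y \, d\mathcal{H}^2 = -\int_\Sigma \overrightarrow{H}_\Sigma \cdot Y \, d\mathcal{H}^2$ with a radial cutoff vector field based at $P$ and using the pointwise bound $|\overrightarrow{H}_\Sigma| \leq \Lambda$, then yields the conclusion after a Gronwall-type argument in $r$.

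For the case $B_r(P) \cap \Gamma \neq \emptyset$, I would use the chart $\Phi$ of Definition \ref{tubular nbd} to parametrize the tubular neighborhood of $\Gamma$ and consider the reflected surface $\tilde\Sigma = \{X : \tilde X \in \Sigma\}$. The orthogonality $N_\Sigma \cdot \nu = 0$ on $\partial\Sigma$ ensures the tangent planes of $\Sigma$ and $\tilde\Sigma$ agree along $\Gamma$, so the doubled set $\bar\Sigma = \Sigma \cup \tilde\Sigma$ is a rectifiable $C^{1,1}$ surface. Pulling back via $\Phi$ to the half-space picture, where the reflection is exactly $(y_1,y_2,y_3) \mapsto (y_1,-y_2,y_3)$, and using \eqref{approximate identity} together with \eqref{reflection preserving}, I would verify that $\tilde\Sigma$ inherits a pointwise mean curvature bound $|\overrightarrow{H}_{\tilde\Sigma}| \leq \Lambda + C'\kappa$, the $\kappa$-correction arising precisely from $|D\Phi - I|$ and $|D^2\Phi|$ failing to vanish when $\Gamma$ is curved. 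I would then apply the Euclidean monotonicity formula to $\bar\Sigma$ at $P$ (valid in the weak sense for $C^{1,1}$ rectifiable surfaces with bounded generalized mean curvature), obtaining non-decreasingness of $e^{C(\Lambda+\kappa)r}\mathcal{H}^2(\bar\Sigma \cap B_r(P))/(\pi r^2)$. The final step is to identify $\mathcal{H}^2(\bar\Sigma \cap B_r(P))$ with the quantity appearing in the statement using Remark \ref{compensation}: the identity is exact in the flat case, and for curved $\Gamma$ the Jacobian of reflection introduces a factor $1 + O(\kappa r)$ that gets absorbed into the exponential prefactor after enlarging $C$.

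The hardest step will be uniformly bookkeeping the $\kappa$-errors: both the mean curvature on $\tilde\Sigma$ and the area distortion under reflection must be bounded by explicit multiples of $\kappa$ with constants independent of $\Sigma$, relying only on the $\kappa$-graph condition for $\Gamma$. A secondary subtlety is the restriction to the connected component $(\Sigma \cap B_r(P))_P$ throughout the argument; one must verify that only the reflection of this component contributes mass to $\bar\Sigma \cap B_r(P)$, which follows from the fact that reflection is a local diffeomorphism near $\Gamma$ together with the orthogonality condition preventing spurious components from appearing after doubling.
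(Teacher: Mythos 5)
The paper does not prove this lemma itself---it is quoted from Gr\"uter--Jost \cite{GJ}---so the relevant comparison is with the standard argument there. Your interior case and your overall instinct (reflection across $\Gamma$) are fine, but the implementation of the boundary case has a genuine gap. You propose to form the doubled surface $\bar{\Sigma}=\Sigma\cup\tilde{\Sigma}$ and apply the classical Euclidean monotonicity formula to it, which requires a pointwise bound on the generalized mean curvature of the reflected piece $\tilde{\Sigma}$; you assert $\left|\overrightarrow{H}_{\tilde{\Sigma}}\right|\leq\Lambda+C'\kappa$, with the correction coming only from $\left|D\Phi-I\right|$ and $\left|D^{2}\Phi\right|$. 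This is false when $\Gamma$ is curved. The reflection $X\mapsto\tilde{X}$ is not an isometry; the mean curvature of the image surface is obtained by tracing the \emph{full} second fundamental form $A_{\Sigma}$ against a metric that differs from $g_{\Sigma}$ by $O\left(\kappa\,d_{\Gamma}\right)$, so $\overrightarrow{H}_{\tilde{\Sigma}}$ contains an error of size $O\left(\kappa\,d_{\Gamma}\right)\left|A_{\Sigma}\right|$ (one already sees this for a minimal surface pushed forward by a non-conformal linear map: the image is not minimal, and its mean curvature is of order $\varepsilon\left|A\right|$). The lemma assumes only $\left\Vert H_{\Sigma}\right\Vert _{L^{\infty}}\leq\Lambda$, and indeed it is applied in Proposition \ref{unit area ratio of MCF} to time-slices $\Sigma_{t}$ with $t\neq0$ for which no bound on $A_{\Sigma_{t}}$ is available, so this error term cannot be absorbed. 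The fix---and the actual mechanism in \cite{GJ}---is to never reflect the surface: one keeps the first variation identity on $\Sigma$ itself and reflects the \emph{test vector field}, taking $Y=\gamma\left(\left|X-P\right|\right)\left(X-P\right)$ plus the pullback under the reflection map of the corresponding field centered at $P$. Then $Y$ is tangent to $\Gamma$ along $\Gamma$, so the free-boundary term vanishes; only $\overrightarrow{H}_{\Sigma}$ (paired with $Y$) and first-order data of $\Sigma$ appear; and the divergence of the reflected term generates precisely the $\mathcal{H}^{2}\left(\,\cdot\,\cap\tilde{B}_{r}\left(P\right)\right)$ contribution, with all $\kappa$-errors multiplying area rather than curvature.

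Two secondary points. First, the claim that the Jacobian factor $1+O\left(\kappa r\right)$ relating $\mathcal{H}^{2}\left(\tilde{\Sigma}\cap B_{r}\left(P\right)\right)$ to $\mathcal{H}^{2}\left(\Sigma\cap\tilde{B}_{r}\left(P\right)\right)$ ``gets absorbed into the exponential'' is not automatic: a two-sided sandwich between two monotone quantities does not transfer a weighted monotonicity statement from one to the other. The correction must enter as an exact density inside the differential inequality (which it does if handled before integrating in $r$), not as an a posteriori multiplicative error. Second, the restriction to the path component $\left(\Sigma\cap B_{r}\left(P\right)\right)_{P}$ needs an explicit word: the monotonicity is naturally proved for a fixed component $\left(\Sigma\cap B_{\rho}\left(P\right)\right)_{P}$ at the top radius $\rho$, and the stated inequality between radii $r<\rho$ then follows from the inclusion $\left(\Sigma\cap B_{r}\left(P\right)\right)_{P}\subset\left(\Sigma\cap B_{\rho}\left(P\right)\right)_{P}\cap B_{r}\left(P\right)$; your remark about ``spurious components after doubling'' does not address this.
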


Combining Lemma \ref{unit area ratio of surface}, Corollary \ref{unit area ratio preserving}
and Lemma \ref{monotonicity of area ratio}, we then get the following
area ratio estimates for MCF with free boundary and uniformly bounded
mean curvature.
\begin{prop}
\label{unit area ratio of MCF}(Modified Area Ratio Staying Close
to One)

Given $\varepsilon>0$, there exists $\delta>0$ with the following
property. Let $\left\{ \Sigma_{t}\right\} _{a\leq t\leq b}$ be a
properly embedded $C^{2}$  MCF in $U\subset\mathbb{R}^{3}$ with
free boundary on $\Gamma=\partial U$, where $a\leq0\leq b$ are constants.
Suppose that 
\begin{itemize}
\item Either $B_{1}\left(O\right)\subset U$, or $O\in\Gamma$ and $\Gamma$
satisfies the $\kappa-$graph condition for some $0<\kappa\leq1$;
\item There is $K\geq1$ so that $\left(\Sigma_{0}\cap B_{\frac{\delta}{K}}\left(P\right)\right)_{P}$
is a $\delta-$Lipschitz graph for any $P\in\Sigma_{0}\cap B_{\frac{1}{2}}\left(O\right)$
(which holds, for instance, when $\left\Vert A_{\Sigma_{0}}\right\Vert _{L^{\infty}\left(B_{1}\left(O\right)\right)}\leq K$);
\item There is $0<\Lambda\leq1$ so that $\sup_{a\leq t\leq b}\left\Vert H_{\Sigma_{t}}\right\Vert _{L^{\infty}\left(B_{1}\left(O\right)\right)}\leq\Lambda$.
\end{itemize}
Then for any $t\in\left[\breve{a},\breve{b}\right]$, $P\in\Sigma_{t}\cap B_{\delta}\left(O\right)$,
there holds 
\[
\sup_{0<r\leq\frac{\delta}{K}}\frac{\mathcal{H}^{2}\left(\Sigma_{t}\cap B_{r}\left(P\right)\right)_{P}+\mathcal{H}^{2}\left(\left(\Sigma_{t}\cap B_{r}\left(P\right)\right)_{P}\cap\tilde{B}_{r}\left(P\right)\right)}{\pi r^{2}}\leq1+\varepsilon,
\]
where $\breve{a}=\max\left\{ a,\,-\frac{\delta}{\Lambda K}\right\} $,
$\breve{b}=\min\left\{ b,\,\frac{\delta}{\Lambda K}\right\} $.
\end{prop}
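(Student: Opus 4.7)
The approach is to combine the three preceding estimates---the initial-time bound of Lemma \ref{unit area ratio of surface}, the short-time persistence along the flow of Corollary \ref{unit area ratio preserving}, and the in-radius monotonicity of Lemma \ref{monotonicity of area ratio}---so as to control the modified area ratio uniformly for all $r\in(0,\delta/K]$ and all $t\in[\breve{a},\breve{b}]$. The naive attempt, which applies Corollary \ref{unit area ratio preserving} directly at each radius $r$, fails because the admissible time window there shrinks linearly in $\rho$: one has $|t|\le\delta_C\rho/\Lambda$, which degenerates as $r\searrow 0$, whereas the Proposition asserts a single window $|t|\le\delta/(\Lambda K)$ valid for all $r$. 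Monotonicity in the radius will be used precisely to bridge this gap.

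The strategy is to fix one intermediate radius $\rho_{*}$, establish the bound at $\rho_{*}$ by combining the first two ingredients, and then descend to arbitrarily small $r$ via the third. Let $\varepsilon_{1},\varepsilon_{2},\varepsilon_{3}>0$ be chosen with $(1+\varepsilon_{1})(1+\varepsilon_{2})(1+\varepsilon_{3})\le 1+\varepsilon$; let $\delta_{L}>0$ be the constant produced by Lemma \ref{unit area ratio of surface} for $\varepsilon_{1}$, let $\delta_{C}>0$ be the constant produced by Corollary \ref{unit area ratio preserving} for $\varepsilon_{2}$, and let $C_{M}$ denote the universal constant of Lemma \ref{monotonicity of area ratio}. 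I then choose the Proposition's $\delta$ so small (in terms of $\delta_{L},\delta_{C},\varepsilon_{3},C_{M}$) that
\[
\rho_{*}\coloneqq\frac{\delta}{K\delta_{C}}
\]
simultaneously satisfies: (i) $\rho_{*}\ge r$ for every $r\in(0,\delta/K]$; (ii) $\rho_{*}\ge\Lambda|t|/\delta_{C}$ for every $|t|\le\delta/(\Lambda K)$, so that $t$ lies in the window of Corollary \ref{unit area ratio preserving} at radius $\rho_{*}$; (iii) $\rho_{*}\le\delta_{C}$, as required by that corollary; (iv) $P_{0}\coloneqq X_{0}\circ X_{t}^{-1}(P)$ lies in $B_{\delta_{L}}(O)$ and $(1+\delta_{C})\rho_{*}\le\delta_{L}/K$, so that Lemma \ref{unit area ratio of surface} is applicable at $P_{0}$ with radius $(1+\delta_{C})\rho_{*}$; and (v) $e^{C_{M}(\Lambda+\kappa)\rho_{*}}\le 1+\varepsilon_{3}$, which follows once $\rho_{*}$ is small since $\Lambda,\kappa\le 1$.

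With these choices, the proof is a three-step chain. First, Corollary \ref{unit area ratio preserving} at radius $\rho_{*}$ bounds the modified area ratio of $\Sigma_{t}$ at $(P,\rho_{*})$ by $(1+\varepsilon_{2})$ times the modified area ratio of $\Sigma_{0}$ at $(P_{0},(1+\delta_{C})\rho_{*})$. Second, Lemma \ref{unit area ratio of surface} bounds the latter by $1+\varepsilon_{1}$. Third, Lemma \ref{monotonicity of area ratio} applied to the surface $\Sigma_{t}$ at the point $P$ transfers the bound at $\rho_{*}$ down to the arbitrary $r\in(0,\rho_{*}]$, at the cost of the factor $e^{C_{M}(\Lambda+\kappa)(\rho_{*}-r)}\le 1+\varepsilon_{3}$. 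Multiplying the three contributions gives the modified area ratio at $(P,r,t)$ bounded by $(1+\varepsilon_{1})(1+\varepsilon_{2})(1+\varepsilon_{3})\le 1+\varepsilon$, as desired.

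The main obstacle is exactly the radius-dependent time window in Corollary \ref{unit area ratio preserving}; Lemma \ref{monotonicity of area ratio} is the essential tool that defeats it, by letting us carry out the forward-in-time comparison only at one comfortable radius $\rho_{*}$ and then slide down to all smaller $r$ essentially for free. A small item to verify is that $P_{0}$ indeed lies in the domain where Lemma \ref{unit area ratio of surface} is applicable: since $|P_{0}-P|\le\Lambda|t|\le\delta/K\le\delta$ and $P\in B_{\delta}(O)$, one has $P_{0}\in B_{2\delta}(O)$, which is contained in $B_{\delta_{L}}(O)$ provided $\delta$ has been taken small relative to $\delta_{L}$.
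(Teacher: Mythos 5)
Your proposal is correct and follows exactly the route the paper intends: the paper's entire proof of this proposition is the one-line remark that it follows by "combining Lemma \ref{unit area ratio of surface}, Corollary \ref{unit area ratio preserving} and Lemma \ref{monotonicity of area ratio}," and the paper introduces Lemma \ref{monotonicity of area ratio} precisely to defeat the radius-dependent time window of Corollary \ref{unit area ratio preserving} — the pivot of your argument. The one caveat is in your step (iv): applying Lemma \ref{unit area ratio of surface} at radius $(1+\delta_{C})\rho_{*}>\delta/K$ requires the initial Lipschitz-graph condition at the scale $\delta_{L}/K$ rather than at the scale $\delta/K$ appearing in the proposition's hypothesis, a mismatch inherited from the paper's own phrasing and harmless when that hypothesis is verified via $\left\Vert A_{\Sigma_{0}}\right\Vert _{L^{\infty}\left(B_{1}\left(O\right)\right)}\leq K$ (which yields the graph condition at every sufficiently small scale simultaneously).
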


Next, let's begin the second part of the section with the following
lemma. It says that given a complete minimal surface $\Sigma$ in
$\mathbb{R}_{+}^{3}$ with free boundary and bounded second fundamental
form, if the modified area ratio is sufficiently close to one, it
must be a half-plane (cf. \cite{LW}). 
\begin{lem}
\label{rigidity}There is a universal constant $\vartheta>0$ with
the following property. Let $\Sigma$ be a properly embedded $C^{2}$
minimal surface satisfying $\left\Vert A_{\Sigma}\right\Vert _{L^{\infty}}\leq K$
for some $K>0$. Suppose that
\begin{itemize}
\item Either $\Sigma$ is complete surface in $\mathbb{R}^{3}$ without
boundary and 
\[
\sup_{r>0}\frac{\mathcal{H}^{2}\left(\Sigma\cap B_{r}\left(P\right)\right)_{P}}{\pi r^{2}}\leq1+\vartheta
\]
for any $P\in\Sigma$;
\item Or $\Sigma$ is a complete surface in $\mathbb{R}_{+}^{3}$ with free
boundary on $\partial\mathbb{R}_{+}^{3}\simeq\mathbb{R}^{2}$ and
\[
\sup_{r>0}\frac{\mathcal{H}^{2}\left(\Sigma\cap B_{r}\left(P\right)\right)_{P}+\mathcal{H}^{2}\left(\left(\Sigma\cap B_{r}\left(P\right)\right)_{P}\cap\tilde{B}_{r}\left(P\right)\right)}{\pi r^{2}}\leq1+\vartheta
\]
for any $P\in\Sigma$.
\end{itemize}
Then $\Sigma$ is flat, i.e. $A_{\Sigma}\equiv0$. 
\end{lem}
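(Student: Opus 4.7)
The plan is to argue by contradiction using a rescaling-and-compactness argument, treating Case 1 first and then reducing Case 2 to Case 1 via reflection across $\partial\mathbb{R}_+^3$.

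For Case 1, suppose no universal $\vartheta>0$ works, so there exist $\vartheta_i\searrow 0$ and properly embedded $C^2$ complete minimal surfaces $\Sigma_i\subset\mathbb{R}^3$ satisfying the area-ratio bound with $\vartheta_i$ but not flat. Set $M_i=\|A_{\Sigma_i}\|_{L^\infty}>0$, pick $P_i\in\Sigma_i$ with $|A_{\Sigma_i}(P_i)|\ge M_i/2$, and replace $\Sigma_i$ by $M_i(\Sigma_i-P_i)$. This preserves minimality and the scale-invariant area-ratio hypothesis while yielding $\|A_{\Sigma_i}\|_{L^\infty}=1$ and $|A_{\Sigma_i}(0)|\ge 1/2$. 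The uniform curvature bound $|A|\le 1$ gives local graphical representations at a uniform scale, so by the time-independent analogue of Proposition \ref{compactness} we may extract a subsequence $\Sigma_i\to\Sigma_\infty$ in $C^2_{\mathrm{loc}}$, with $\Sigma_\infty$ a complete properly embedded minimal surface satisfying $|A_{\Sigma_\infty}(0)|\ge 1/2$.

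Let $\Sigma_*$ denote the connected component of $\Sigma_\infty$ containing $0$; by $|A|\le 1$ and completeness, $\Sigma_*$ is itself a complete embedded minimal surface. Passing the area-ratio hypothesis to the limit gives
\[
\frac{\mathcal{H}^2\bigl((\Sigma_*\cap B_r(0))_0\bigr)}{\pi r^2}\le 1
\]
for every $r>0$. For $r$ small enough that the curvature bound makes $\Sigma_*\cap B_r(0)$ a single $C^2$ graph over $T_0\Sigma_*$, the path component in the numerator coincides with the full intersection, so the classical monotonicity formula for minimal surfaces in $\mathbb{R}^3$ applies and
\[
\frac{\mathcal{H}^2(\Sigma_*\cap B_r(0))}{\pi r^2}
\]
is non-decreasing in $r$ with $\lim_{r\searrow 0}$ equal to $1$. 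The upper bound of $1$ forces the ratio to be identically $1$ on that range, and the equality case of monotonicity yields $X^\perp=0$ for every $X\in\Sigma_*\cap B_r(0)$ (taking $P=0$), so $\Sigma_*$ agrees with its tangent plane $T_0\Sigma_*$ near the origin. Real analyticity of minimal surfaces extends this equality to the entire connected component, so $\Sigma_*=T_0\Sigma_*$ is a plane, contradicting $|A_{\Sigma_\infty}(0)|\ge 1/2$.

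For Case 2, let $\bar\Sigma=\Sigma\cup\tilde\Sigma$ be the even reflection of $\Sigma$ across $\partial\mathbb{R}_+^3$. Since $\Sigma$ meets the plane orthogonally, the elliptic analogue of Lemma \ref{reflection principle} applied to the minimal-surface equation satisfied by the defining graph of $\Sigma$ shows that $\bar\Sigma$ is a properly embedded complete $C^2$ minimal surface in $\mathbb{R}^3$ without boundary, with $\|A_{\bar\Sigma}\|_{L^\infty}\le K$. Remark \ref{compensation} and reflective symmetry give
\[
\mathcal{H}^2\bigl(\bar\Sigma\cap B_r(P)\bigr)=\mathcal{H}^2\bigl(\Sigma\cap B_r(P)\bigr)+\mathcal{H}^2\bigl(\Sigma\cap\tilde B_r(P)\bigr)
\]
for every $P\in\bar\Sigma$. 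Restricting to the path component containing $P$ translates the modified area-ratio hypothesis on $\Sigma$ into the standard area-ratio hypothesis on $\bar\Sigma$ with the same $\vartheta$, and Case 1 applied to $\bar\Sigma$ then forces $\bar\Sigma$, hence $\Sigma$, to be flat.

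The main obstacle is the mismatch between the path component $(\,\cdot\,)_P$ appearing in the hypothesis and the full intersection $\Sigma\cap B_r(P)$ required by the monotonicity formula. The uniform curvature bound $|A|\le 1$ after rescaling bridges this by forcing local graphicality at a definite scale, so the two coincide for all sufficiently small $r$, which is enough to trigger monotonicity rigidity; real-analytic continuation then propagates the planar conclusion to the entire connected component. A secondary technical point is ensuring the rescaled limit $\Sigma_\infty$ is both complete and genuinely non-flat, which follows respectively from the uniform curvature bound and from the point-picking choice of $P_i$.
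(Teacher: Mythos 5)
Your proposal is correct and follows essentially the same route as the paper: normalize so $K=1$, pick almost-extremal curvature points, rescale by the curvature there, pass to a $C^2_{\mathrm{loc}}$ limit via the compactness theorem, and use the rigidity in the monotonicity formula to force the limit to be a plane, contradicting the curvature normalization at the origin; Case 2 is reduced to Case 1 by reflection exactly as in the paper. The only difference is that you spell out the monotonicity/equality-case step (density one at a smooth point, ratio pinched to one, cone condition, analytic continuation) which the paper merely asserts.
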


\begin{proof}
It suffices to prove the first case (which is a lemma in \cite{LW}),
since the second case can be reduced to the first case by the method
of reflection (see Remark \ref{compensation}, (\ref{eq of graph})
and Lemma \ref{reflection principle}). Below we sketch the argument
in \cite{LW} for the convenience of the reader.

By rescaling, we may assume that $K=1$. For the sake of contradiction,
let's suppose that there is a sequence of non-flat, complete, properly
embedded $C^{2}$ minimal surfaces $\left\{ \Sigma_{i}\right\} _{i\in\mathbb{N}}$
satisfying $0<\left\Vert A_{\Sigma_{i}}\right\Vert _{L^{\infty}}\leq1$
and 
\[
\sup_{P\in\Sigma_{i},\,r>0}\frac{\mathcal{H}^{2}\left(\Sigma_{i}\cap B_{r}\left(P\right)\right)_{P}}{\pi r^{2}}\leq1+\frac{1}{i}
\]
for all $i$. For each $i\in\mathbb{N}$, choose $P_{i}\in\Sigma_{i}$
so that 
\[
2\left|A_{\Sigma_{i}}\left(P_{i}\right)\right|\geq\left\Vert A_{\Sigma_{i}}\right\Vert _{L^{\infty}}\coloneqq A_{i}>0.
\]
Let $\hat{\Sigma}_{i}=A_{i}\left(\Sigma_{i}-P_{i}\right)$, then $\hat{\Sigma}_{i}$
is a properly embedded $C^{2}$  minimal surface satisfying $\left\Vert A_{\hat{\Sigma}_{i}}\right\Vert _{L^{\infty}}\leq1$,
$\left|A_{\hat{\Sigma}_{i}}\left(O\right)\right|\geq\frac{1}{2}$
and 
\[
\sup_{r>0}\frac{\mathcal{H}^{2}\left(\hat{\Sigma}_{i}\cap B_{r}\left(O\right)\right)_{O}}{\pi r^{2}}\leq1+\frac{1}{i}.
\]
By the compactness theorem for the space of minimal surfaces, it follows
that a subsequence of $\left\{ \hat{\Sigma}_{i}\right\} _{i\in\mathbb{N}}$
converges in the $C^{2}$ topology to a complete minimal surface $\hat{\Sigma}$,
which satisfies $\left\Vert A_{\hat{\Sigma}_{i}}\right\Vert _{L^{\infty}}=1$,
$\left|A_{\hat{\Sigma}}\left(O\right)\right|\geq\frac{1}{2}$ and
\[
\sup_{r>0}\frac{\mathcal{H}^{2}\left(\hat{\Sigma}\cap B_{r}\left(O\right)\right)_{O}}{\pi r^{2}}\leq1
\]
By the monotonicity formula of minimal surfaces (cf. \cite{Al}),
it follows that $\left(\hat{\Sigma}\cap B_{1}\left(O\right)\right)_{O}$
must be a $C^{2}$ minimal cone (and hence a plane), which contradicts
with $\left|A_{\hat{\Sigma}}\left(O\right)\right|\geq\frac{1}{2}$.
\end{proof}
Now we are in a position to establish Li-Wang's pseudolocality theorem
for MCF (cf. \cite{LW}) in the free boundary setting. The proof is
based on a rescaling argument, with the help of Proposition \ref{unit area ratio of MCF}
and Lemma \ref{rigidity}. To simplify the notation in the proof,
let's denote the interior norm for the second fundamental form of
MCF by 
\begin{equation}
\left\llbracket A_{\left\{ \Sigma_{t}\right\} }\right\rrbracket _{B_{R}\left(O\right)\times B_{\rho}\left(0\right)}\label{interior norm}
\end{equation}
\[
=\sup\left\{ r\left|A_{\Sigma_{t_{0}}}\left(P\right)\right|\,:\,t_{0}\in B_{\rho}\left(0\right),\,P\in\Sigma_{t_{0}},\,B_{r}\left(P\right)\times B_{r^{2}}\left(t_{0}\right)\subset B_{R}\left(O\right)\times B_{\rho}\left(0\right)\right\} .
\]
\begin{prop}
\label{Li-Wang curvature estimate}(Li-Wang's Curvature Estimate for
MCF)

There exist $\delta>0$ and $C>0$ with the following property. Let
$\left\{ \Sigma_{t}\right\} _{-1\leq t\leq1}$ be a properly embedded
$C^{2}$  MCF in $U\subset\mathbb{R}^{3}$ with free boundary on $\Gamma=\partial U$.
Suppose that 
\begin{itemize}
\item Either $B_{1}\left(O\right)\subset U$, or $O\in\Gamma$ and $\Gamma$
is mean convex and satisfies the $\kappa-$graph condition for some
$0<\kappa\leq1$;
\item There is $K\geq1$ so that $\left(\Sigma_{0}\cap B_{\frac{\delta}{K}}\left(P\right)\right)_{P}$
is a $\delta-$Lipschitz graph for any $P\in\Sigma_{0}\cap B_{\frac{1}{2}}\left(O\right)$
(which holds, for instance, when $\left\Vert A_{\Sigma_{0}}\right\Vert _{L^{\infty}\left(B_{1}\left(O\right)\right)}\leq K$);
\item There is $0<\Lambda\leq1$ so that $\sup_{\left|t\right|\leq1}\left\Vert H_{\Sigma_{t}}\right\Vert _{L^{\infty}\left(B_{1}\left(O\right)\right)}\leq\Lambda$.
\end{itemize}
Then we have
\[
\left\llbracket A_{\left\{ \Sigma_{t}\right\} }\right\rrbracket _{B_{\frac{\delta}{K}}\left(O\right)\times B_{\min\left\{ 1,\,\frac{\delta}{\Lambda K}\right\} }\left(0\right)}\leq C;
\]
in particular, there holds
\[
\sup_{\left|t\right|\leq\min\left\{ \frac{1}{4},\,\frac{\delta}{4\Lambda K}\right\} }\left\Vert A_{\Sigma_{t}}\right\Vert _{L^{\infty}\left(B_{\frac{\delta}{2K}}\left(O\right)\right)}\leq\frac{2C}{\delta}K.
\]
\end{prop}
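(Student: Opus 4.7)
The plan is a blow-up/point-picking argument by contradiction, using the three main ingredients assembled so far: Proposition \ref{unit area ratio of MCF} (the modified area ratio stays near $1$ along the flow), Proposition \ref{compactness} (compactness of the space of MCF with free boundary), and Lemma \ref{rigidity} (rigidity of complete minimal surfaces with almost-unit area ratio).

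Suppose for contradiction that no such $C$ exists. Then there is a sequence of MCF $\{\Sigma_t^i\}_{-1\le t\le 1}$ in $U_i$ satisfying the hypotheses with parameters $K_i,\Lambda_i,\kappa_i$, and
$$\ell_i:=\llbracket A_{\{\Sigma_t^i\}}\rrbracket_{B_{\delta/K_i}(O)\times B_{\min\{1,\delta/(\Lambda_iK_i)\}}(0)}\to\infty.$$
A parabolic point-picking on the supremum defining $\llbracket\cdot\rrbracket$ produces $(P_i,t_i)$ in the relevant cylinder and scales $r_i>0$ such that $A_i:=|A_{\Sigma_{t_i}^i}(P_i)|$ satisfies $r_iA_i\ge\ell_i/2\to\infty$ while $|A_{\Sigma_\tau^i}|\le 2A_i$ on $B_{r_i/2}(P_i)\times B_{r_i^2/4}(t_i)$. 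Let $\mathring{P}_i$ be the projection of $P_i$ onto $\Gamma_i$ when $P_i$ lies in the tubular neighborhood, and $\mathring{P}_i=P_i$ otherwise. Rescale parabolically about $(\mathring{P}_i,t_i)$ by $A_i$ to obtain flows $\hat{\Sigma}_\tau^i$ in $\hat{U}_i:=A_i(U_i-\mathring{P}_i)$; these satisfy $|A_{\hat{\Sigma}_\tau^i}|\le 2$ on parabolic balls of radius $A_ir_i/2\to\infty$, have mean curvature bounded by $\Lambda_i/A_i\to 0$, and their support surfaces $\hat{\Gamma}_i$ obey the $(\kappa_i/A_i)$-graph condition with $\kappa_i/A_i\to 0$.

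After passing to a subsequence, $\hat{U}_i$ converges either to all of $\mathbb{R}^3$ (when $d_{\Gamma_i}(P_i)A_i\to\infty$) or to a half-space $\mathbb{R}^3_+$ (otherwise), with $\hat{\Gamma}_i\to\partial\mathbb{R}^3_+$ in $C^3$ in the second case. Applying Proposition \ref{unit area ratio of MCF} on the original scale with $\varepsilon$ chosen smaller than the constant $\vartheta$ of Lemma \ref{rigidity} and then rescaling shows that for every fixed radius the modified area ratios of $\hat{\Sigma}_0^i$ around $\hat{P}_i:=A_i(P_i-\mathring{P}_i)$ stay $\le 1+\vartheta$. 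Proposition \ref{compactness} then yields a subsequential limit $\hat{\Sigma}_\tau$; the vanishing of $H$ makes the limit static, so $\hat{\Sigma}:=\hat{\Sigma}_0$ is a complete, properly embedded $C^2$ minimal surface (with free boundary on $\partial\mathbb{R}^3_+$ in the boundary case) satisfying the almost-unit area-ratio hypothesis of Lemma \ref{rigidity}. The lemma forces $A_{\hat{\Sigma}}\equiv 0$, contradicting $|A_{\hat{\Sigma}}(\hat{P})|=1$ (preserved by $C^2$ convergence at $\hat{P}_i\to\hat{P}$). The ``in particular'' statement is immediate from the definition \eqref{interior norm}: for $P\in\Sigma_t\cap B_{\delta/(2K)}(O)$ and $|t|\le\min\{1/4,\,\delta/(4\Lambda K)\}$, take $r=\delta/(2K)$ so that $B_r(P)\times B_{r^2}(t)$ lies in the target cylinder (after shrinking $\delta$ once if needed), giving $r|A|\le C$ hence $|A|\le 2CK/\delta$.

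The main obstacle is the boundary case: the non-degeneracy hypothesis in Proposition \ref{compactness} requires the limit to possess accumulation points off $\partial\mathbb{R}^3_+$. This is secured by the uniform curvature bound $|A_{\Sigma_{t_i}^i}|\le 2A_i$ near $P_i$, which via Stahl's gradient estimate (Lemma \ref{Stahl gradient}) exhibits $\Sigma_{t_i}^i$ locally as a small-gradient graph; after rescaling, the surface must extend transversally into $\mathbb{R}^3_+$ rather than collapse into the boundary plane, ensuring both the non-degeneracy assumption of Proposition \ref{compactness} and the preservation of the identity $|A_{\hat{\Sigma}}(\hat{P})|=1$ in the limit.
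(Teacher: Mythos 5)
Your proposal is correct and follows essentially the same route as the paper: contradiction, parabolic point-picking on the interior norm (\ref{interior norm}), rescaling by the curvature $A_i$, the dichotomy $A_i\,d_{\Gamma_i}(P_i)\to\infty$ versus bounded, area-ratio control from Proposition \ref{unit area ratio of MCF} with $\varepsilon=\vartheta$, compactness via Proposition \ref{compactness}, and the rigidity of Lemma \ref{rigidity}; your extra remark securing the non-degeneracy hypothesis of Proposition \ref{compactness} is a welcome addition the paper leaves implicit. The only slip is in the centering: when $A_i\,d_{\Gamma_i}(P_i)\to\infty$ you must rescale about $P_i$ itself (as the paper does in its Case 1), since recentering at the projection $\mathring{P}_i$ would send $\hat{P}_i=A_i(P_i-\mathring{P}_i)$ off to infinity and lose the curvature-one point in the limit.
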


\begin{proof}
Let $\vartheta>0$ be the constant in Lemma \ref{rigidity} and $\delta>0$
be the constant in Proposition \ref{unit area ratio of MCF} with
the choice $\varepsilon=\vartheta$.

Suppose that the proposition does not hold. Then for each $i\in\mathbb{N}$,
we can find a MCF $\left\{ \Sigma_{t}^{i}\right\} _{-1\leq t\leq1}$
in $U_{i}$ with free boundary on $\Gamma_{i}=\partial U_{i}$ so
that it satisfies the hypotheses and
\[
C_{i}\coloneqq\left\llbracket A_{\left\{ \Sigma_{t}^{i}\right\} }\right\rrbracket _{B_{\frac{\delta}{K}}\left(O\right)\times B_{\min\left\{ 1,\,\frac{\delta}{\Lambda K}\right\} }\left(0\right)}\rightarrow\infty.
\]
For each $i\in\mathbb{N}$, choose $\left(P_{i},t_{i},r_{i}\right)$
so that $r_{i}\left|A_{\Sigma_{t}^{i}}\left(P_{i}\right)\right|\geq\frac{C_{i}}{2}$.
Let $A_{i}=\left|A_{\Sigma_{t}^{i}}\left(P_{i}\right)\right|$, then
we have $A_{i}\geq r_{i}A_{i}\rightarrow\infty$ as $i\rightarrow\infty$.
Moreover, by (\ref{interior norm}), we have 
\[
\sup_{\left|t-t_{i}\right|<\left(\frac{r_{i}}{2}\right)^{2}}\,\frac{r_{i}}{2}\left\Vert A_{\Sigma_{t}}\right\Vert _{L^{\infty}\left(B_{\frac{r_{i}}{2}}\left(P_{i}\right)\right)}\,\leq\,C_{i}\,\leq\,2r_{i}A_{i},
\]
which implies
\[
\sup_{\left|t-t_{i}\right|<\frac{r_{i}^{2}}{4}}\,\left\Vert A_{\Sigma_{t}}\right\Vert _{L^{\infty}\left(B_{\frac{r_{i}}{2}}\left(P_{i}\right)\right)}\,\leq\,4A_{i}.
\]
Here we have two cases to consider:
\begin{itemize}
\item \textit{Case 1}: $\limsup_{i\rightarrow\infty}A_{i}\,d_{\Gamma_{i}}\left(P_{i}\right)=\infty$;
\item \textit{Case 2}: $\limsup_{i\rightarrow\infty}A_{i}\,d_{\Gamma_{i}}\left(P_{i}\right)<R$
for some $R>0$.
\end{itemize}
$\mathbf{Case\;1}$ ($\limsup_{i\rightarrow\infty}A_{i}\,d_{\Gamma_{i}}\left(P_{i}\right)=\infty$):

By passing to a subsequence, we may assume that $\lim_{i\rightarrow\infty}A_{i}\,d_{\Gamma_{i}}\left(P_{i}\right)=\infty$.
Let 
\[
\hat{\Sigma}_{\tau}^{i}=\left(A_{i}\left(\Sigma_{t_{i}+\frac{\tau}{A_{i}^{2}}}^{i}-P_{i}\right)\cap B_{\frac{1}{2}r_{i}A_{i}}\left(O\right)\right)_{O_{\tau}},\quad\left|\tau\right|\leq\frac{1}{4}\left(r_{i}A_{i}\right)^{2},
\]
where $O_{\tau}$ is the ``normal trajectory'' of $O$ along the
flow at time $\tau$. Then we have 
\[
\left\Vert A_{\hat{\Sigma}_{\tau}^{i}}\right\Vert _{L^{\infty}}\leq4,\quad\left|A_{\hat{\Sigma}_{0}^{i}}\left(O\right)\right|=1,
\]
\[
\left\Vert H_{\hat{\Sigma}_{\tau}^{i}}\right\Vert _{L^{\infty}}\leq\frac{\Lambda}{A_{i}}\rightarrow0,
\]
\[
\sup_{\left|\tau\right|<\frac{1}{4}\left(r_{i}A_{i}\right)^{2}}\,\,\sup_{Q\in\hat{\Sigma}_{\tau}^{i}}\,\,\sup_{0<r<\textrm{dist}\left(Q,\,\partial\hat{\Sigma}_{\tau}^{i}\right)}\frac{\mathcal{H}^{2}\left(\hat{\Sigma}_{\tau}^{i}\cap B_{r}\left(Q\right)\right)_{Q}}{\pi r^{2}}\leq1+\vartheta,
\]
in which the last inequality comes from Proposition \ref{unit area ratio of MCF}.
It follows, by Proposition \ref{compactness}, that a subsequence
of the rescaled flows converges to a complete, properly embedded minimal
surface $\hat{\Sigma}$, which satisfies 
\[
\left\Vert A_{\hat{\Sigma}}\right\Vert _{L^{\infty}}\leq4,\quad\left|A_{\hat{\Sigma}}\left(O\right)\right|=1,
\]
\[
\sup_{Q\in\hat{\Sigma}}\,\,\sup_{r>0}\frac{\mathcal{H}^{2}\left(\hat{\Sigma}\cap B_{r}\left(Q\right)\right)_{Q}}{\pi r^{2}}\leq1+\vartheta.
\]
This contradicts Lemma \ref{rigidity}.

$\mathbf{Case\;2}$ ($\limsup_{i\rightarrow\infty}A_{i}\,d_{\Gamma_{i}}\left(P_{i}\right)<R$
for some $R>0$):

Let's first choose $\mathring{P}_{i}\in\Gamma_{i}$ so that $\left|\mathring{P}_{i}-P_{i}\right|=d_{\Gamma_{i}}\left(P_{i}\right)$.
For for $i\gg1$ (so that $r_{i}A_{i}>R$), let 
\[
\mathcal{P}_{i}=A_{i}\left(P_{i}-\mathring{P}_{i}\right)\in B_{R}\left(O\right);
\]
\[
\hat{\Sigma}_{\tau}^{i}=\left(A_{i}\left(\Sigma_{t_{i}+\frac{\tau}{A_{i}^{2}}}^{i}-\mathring{P}_{i}\right)\cap B_{\frac{1}{2}r_{i}A_{i}}\left(\mathcal{P}_{i}\right)\right)_{\mathcal{P}_{i}\left(\tau\right)},\quad\left|\tau\right|\leq\frac{1}{4}\left(r_{i}A_{i}\right)^{2};
\]
\[
\hat{U}_{i}=A_{i}\left(U_{i}-\mathring{P}_{i}\right),\quad\partial\hat{U}_{i}=\hat{\Gamma}_{i}=A_{i}\left(\Gamma_{i}-\mathring{P}_{i}\right).
\]
where $\mathcal{P}_{i}\left(\tau\right)$ is the ``normal trajectory''
of $\mathcal{P}_{i}$ along the flow at time $\tau$. Then $\hat{\Gamma}_{i}$
satisfies $\kappa_{i}-$graph condition, where $\kappa_{i}=\kappa A_{i}^{-1}\rightarrow0$,
and
\[
\left\Vert A_{\hat{\Sigma}_{\tau}^{i}}\right\Vert _{L^{\infty}}\leq4,\quad\left|A_{\hat{\Sigma}_{0}^{i}}\left(\mathcal{P}_{i}\right)\right|=1,
\]
\[
\left\Vert H_{\hat{\Sigma}_{\tau}^{i}}\right\Vert _{L^{\infty}}\leq\frac{\Lambda}{A_{i}}\rightarrow0,
\]
and
\[
\frac{\mathcal{H}^{2}\left(\hat{\Sigma}_{\tau}^{i}\cap B_{r}\left(Q\right)\right)_{Q}+\mathcal{H}^{2}\left(\left(\hat{\Sigma}_{\tau}^{i}\cap B_{r}\left(Q\right)\right)_{Q}\cap\tilde{B}_{r}\left(Q\right)\right)}{\pi r^{2}}\leq1+\vartheta
\]
for all $\left(Q,\tau,r\right)$ satisfying
\[
\left|\tau\right|<\frac{1}{4}\left(r_{i}A_{i}\right)^{2},\quad Q\in\hat{\Sigma}_{\tau}^{i},\quad B_{r}\left(Q\right)\subset B_{\frac{1}{2}r_{i}A_{i}}\left(\mathcal{P}_{i}\right).
\]
Note that the last inequality comes from Proposition \ref{unit area ratio of MCF}.
Passing to a subsequence, we may assume $\mathcal{P}_{i}\rightarrow\mathcal{P}\in B_{R}\left(O\right)$
and (by Proposition \ref{compactness}) that the rescaled flows converge
to a complete, properly embedded minimal surface $\hat{\Sigma}$ in
$\mathbb{R}_{+}^{3}$ with free boundary on $\partial\mathbb{R}_{+}^{3}\simeq\mathbb{R}^{2}$.
The limiting minimal surface satisfies 
\[
\left\Vert A_{\hat{\Sigma}}\right\Vert _{L^{\infty}}\leq4,\quad\left|A_{\hat{\Sigma}}\left(\mathcal{P}\right)\right|=1,
\]
\[
\sup_{Q\in\hat{\Sigma}}\,\,\sup_{r>0}\frac{\mathcal{H}^{2}\left(\hat{\Sigma}\cap B_{r}\left(Q\right)\right)_{Q}+\mathcal{H}^{2}\left(\left(\hat{\Sigma}\cap B_{r}\left(Q\right)\right)_{Q}\cap\tilde{B}_{r}\left(Q\right)\right)}{\pi r^{2}}\leq1+\vartheta,
\]
which contradicts Lemma \ref{rigidity}.
\end{proof}
To apply Proposition \ref{Li-Wang curvature estimate}, one of the
conditions to be satisfied is that we need to know in what scale can
we write the surface as a local graph with small gradient. Sometimes
this is not known in advance, especially when proving our main theorem.
Instead, we would like to replace this condition by the smallness
of $L^{2}$ norm of the second fundamental form, which is what we
called the small energy theorem. The key to making the transition
is through the following lemma and its corollary (see Corollary \ref{energy concentration coro}).
\begin{lem}
\label{energy concentration}(High-Curvature and Energy Concentration)

Given $K\geq5$, there exists $\epsilon>0$ with the following property.
Let $\left\{ \Sigma_{t}\right\} _{-1\leq t\leq1}$ be a properly embedded
$C^{2}$  MCF in $U\subset\mathbb{R}^{3}$ with free boundary on $\Gamma=\partial U$.
Suppose that 
\begin{itemize}
\item Either $B_{1}\left(O\right)\subset U$, or $O\in\Gamma$ and $\Gamma$
is mean convex and satisfies the $\kappa-$graph condition for some
$0<\kappa\leq1$;
\item There is $0<\Lambda\leq1$ so that $\sup_{\left|t\right|\leq1}\left\Vert H_{\Sigma_{t}}\right\Vert _{L^{\infty}\left(B_{1}\left(O\right)\right)}\leq\Lambda$;
\item There holds $\sup\left\{ r\left|A_{\Sigma_{0}}\left(P\right)\right|\,:\,P\in\Sigma_{0},\,B_{r}\left(P\right)\subset B_{1}\left(O\right)\right\} >K$.
\end{itemize}
Then we have 
\[
\int_{\Sigma_{0}\cap B_{1}\left(O\right)}\left|A_{\Sigma_{0}}\right|^{2}d\mathcal{H}^{2}>\epsilon.
\]
\end{lem}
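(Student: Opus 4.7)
I argue by contradiction via a blow-up analysis parallel to the proof of Proposition \ref{Li-Wang curvature estimate}. Assuming the lemma fails for some $K\geq 5$, one obtains a sequence of free-boundary MCFs $\left\{\Sigma_{t}^{i}\right\}_{-1\leq t\leq 1}$ satisfying the three bulleted hypotheses (with a common $\Lambda\leq 1$ and $\kappa\leq 1$) yet with $\int_{\Sigma_{0}^{i}\cap B_{1}(O)}\left|A_{\Sigma_{0}^{i}}\right|^{2}d\mathcal{H}^{2}\to 0$. My first step is a standard iterative point-picking on each $\Sigma_{0}^{i}$: starting from any $(P_{0},r_{0})$ with $B_{r_{0}}(P_{0})\subset B_{1}(O)$ and $r_{0}\bigl|A_{\Sigma_{0}^{i}}(P_{0})\bigr|>K$, I repeatedly halve the radius whenever the curvature more than doubles somewhere in the concentric half-radius ball. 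The procedure terminates (since $|A|$ is continuous and the centers form a Cauchy sequence while $\bigl|A(P_{n})\bigr|$ doubles at each step), producing $P_{i}\in\Sigma_{0}^{i}$ with $A_{i}:=\bigl|A_{\Sigma_{0}^{i}}(P_{i})\bigr|\geq K\geq 5$ and $\bigl|A_{\Sigma_{0}^{i}}\bigr|\leq 2A_{i}$ throughout $\Sigma_{0}^{i}\cap B_{K/(2A_{i})}(P_{i})$.

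Next I rescale by $A_{i}$ about $P_{i}$ if $A_{i}d_{\Gamma_{i}}(P_{i})\to\infty$ along a subsequence (Case 1, interior blow-up), and about the nearest boundary point $\mathring{P}_{i}\in\Gamma_{i}$ otherwise (Case 2, boundary blow-up). The rescaled flow $\hat{\Sigma}_{\tau}^{i}$ with rescaled center $\hat{P}_{i}$ satisfies
\[
\bigl|A_{\hat{\Sigma}_{0}^{i}}(\hat{P}_{i})\bigr|=1,\quad\bigl|A_{\hat{\Sigma}_{0}^{i}}\bigr|\leq 2\;\;\text{on}\;\; B_{K/2}(\hat{P}_{i}),\quad\bigl|H_{\hat{\Sigma}_{\tau}^{i}}\bigr|\leq\Lambda/A_{i}\leq 1/K\leq 1,
\]
and in Case 2 the rescaled support $\hat{\Gamma}_{i}$ is mean convex, satisfies the $(\kappa/A_{i})$-graph condition, and $\bigl|\hat{P}_{i}\bigr|=A_{i}d_{\Gamma_{i}}(P_{i})$ is uniformly bounded. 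By the scale invariance of $\int|A|^{2}$ in dimension two,
\[
\int_{\hat{\Sigma}_{0}^{i}\cap B_{K/2}(\hat{P}_{i})}\bigl|A_{\hat{\Sigma}_{0}^{i}}\bigr|^{2}d\mathcal{H}^{2}\;\leq\;\int_{\Sigma_{0}^{i}\cap B_{1}(O)}\left|A_{\Sigma_{0}^{i}}\right|^{2}d\mathcal{H}^{2}\;\longrightarrow\;0.
\]

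Feeding these uniform initial bounds into Proposition \ref{Li-Wang curvature estimate} with effective parameters $K_{\mathrm{prop}}=2$ and $\Lambda_{\mathrm{prop}}=1/K$ --- the interior version centered at $\hat{P}_{i}$ in Case 1, and the boundary version centered at $0\in\hat{\Gamma}_{i}$ in Case 2, the latter preceded if necessary by a further harmless rescaling of factor $1/d_{\hat{\Gamma}_{i}}(\hat{P}_{i})$ to bring $\hat{P}_{i}$ inside the pseudolocality ball --- propagates the bound into a uniform space-time $C^{2}$ bound on a definite parabolic neighborhood of $(\hat{P}_{i},0)$. Proposition \ref{compactness} then extracts a subsequence converging in $C^{2}$ to a limit MCF $\bigl\{\hat{\Sigma}_{\tau}\bigr\}$; since $\bigl|H_{\hat{\Sigma}_{\tau}^{i}}\bigr|\to 0$, the limit is minimal (complete in $\mathbb{R}^{3}$ in Case 1, or with free boundary on a plane in Case 2). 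By $C^{2}$ convergence, $\bigl|A_{\hat{\Sigma}_{0}}(\hat{P}_{\infty})\bigr|=1$ at $\hat{P}_{\infty}=\lim\hat{P}_{i}$, yet $\int_{\hat{\Sigma}_{0}\cap B_{K/2}(\hat{P}_{\infty})}\bigl|A_{\hat{\Sigma}_{0}}\bigr|^{2}d\mathcal{H}^{2}=0$, forcing $A_{\hat{\Sigma}_{0}}\equiv 0$ near $\hat{P}_{\infty}$ --- a contradiction.

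The main obstacle is the bookkeeping in Case 2, where $\hat{P}_{i}$ sits at positive but bounded distance from $\hat{\Gamma}_{i}$: Proposition \ref{Li-Wang curvature estimate} centered on the boundary only controls a small ball around that boundary point, so one must combine it with a secondary rescaling and verify that the resulting pseudolocality bound still reaches $\hat{P}_{i}$. The assumption $K\geq 5$ provides the elbow room needed to carry out these rescalings while keeping all auxiliary parameters within the ranges allowed by Propositions \ref{Li-Wang curvature estimate} and \ref{compactness}.
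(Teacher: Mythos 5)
Your overall strategy coincides with the paper's: argue by contradiction, point-pick an almost-curvature-maximizing point on $\Sigma_{0}^{i}$, blow up at the curvature scale, invoke Proposition \ref{Li-Wang curvature estimate} and Proposition \ref{compactness}, and contradict the vanishing of $\int\left|A\right|^{2}$ at the limit. (The paper's point selection is the simpler one-step near-maximizer of $r\left|A_{\Sigma_{0}^{i}}\left(P\right)\right|$, giving $\left\Vert A_{\Sigma_{0}^{i}}\right\Vert _{L^{\infty}\left(B_{r_{i}/2}\left(P_{i}\right)\right)}\leq4A_{i}$ with $r_{i}A_{i}>K/2$, rather than your iterated halving, but that difference is immaterial.)

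The genuine gap is in your Case 2, the regime where $A_{i}\,d_{\Gamma_{i}}\left(P_{i}\right)$ is bounded but possibly bounded away from $0$, say $\alpha\leq A_{i}\,d_{\Gamma_{i}}\left(P_{i}\right)\leq R$. The boundary version of Proposition \ref{Li-Wang curvature estimate} centered at $O\in\hat{\Gamma}_{i}$ only yields a curvature bound on the small ball $B_{\delta/K_{\mathrm{prop}}}\left(O\right)$, which does not contain $\hat{P}_{i}$ when $\left|\hat{P}_{i}\right|\geq\alpha>\delta/K_{\mathrm{prop}}$; so you obtain no control at the one point where you know $\left|A_{\hat{\Sigma}_{0}^{i}}\right|=1$, and the contradiction evaporates. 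Your proposed fix --- a further rescaling by $1/d_{\hat{\Gamma}_{i}}\left(\hat{P}_{i}\right)$ about the boundary point --- does not help: it normalizes the point to unit distance from the boundary, still outside the fixed small ball where the boundary pseudolocality gives control, and no rescaling centered on $\hat{\Gamma}_{i}$ can simultaneously move $\hat{P}_{i}$ into that ball and keep its curvature bounded below by a fixed constant. The paper resolves this by choosing the threshold $\alpha$ \emph{after} the pseudolocality constant $\delta$ (requiring $\alpha<\delta/2$) and splitting accordingly: if $A_{i}\,d_{\Gamma_{i}}\left(P_{i}\right)<\alpha$ the rescaled point lies in $B_{2\alpha}\left(O\right)\subset B_{\delta}\left(O\right)$ and the boundary version applies; if $A_{i}\,d_{\Gamma_{i}}\left(P_{i}\right)\geq\alpha$ one re-centers at $P_{i}$ and rescales by $A_{i}/\alpha$ so that $B_{1}\left(O\right)$ lies inside the rescaled domain and the \emph{interior} version applies (the curvature at the center is then $\alpha$, still a fixed positive constant, so the contradiction survives). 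Separately, your assertion that the limit flow is minimal is unjustified --- the point picking only gives $A_{i}\geq K$, not $A_{i}\rightarrow\infty$, so $\left|H\right|\leq\Lambda/A_{i}$ need not tend to $0$ --- but this is harmless since the contradiction only requires the limit to be a $C^{2}$ flow with $\int\left|A\right|^{2}=0$ near a point of unit (or fixed positive) curvature.
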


\begin{proof}
Suppose the contrary. Then for each $i\in\mathbb{N}$, there is a
MCF $\left\{ \Sigma_{t}^{i}\right\} _{-1\leq t\leq1}$ in $U_{i}$
with free boundary on $\Gamma_{i}=\partial U_{i}$, which satisfies
the hypotheses and
\[
\int_{\Sigma_{0}^{i}\cap B_{1}\left(O\right)}\left|A_{\Sigma_{0}^{i}}\right|^{2}d\mathcal{H}^{2}\leq\frac{1}{i}.
\]
For each $i\in\mathbb{N}$, choose $P_{i}\in\Sigma_{0}^{i}$ and $0<r_{i}<1$
so that 
\[
r_{i}\left|A_{\Sigma_{0}^{i}}\left(P_{i}\right)\right|\geq\frac{1}{2}\sup\left\{ r\left|A_{\Sigma_{0}^{i}}\left(P\right)\right|\,:\,P\in\Sigma_{0}^{i},\,B_{r}\left(P\right)\subset B_{1}\left(O\right)\right\} >\frac{K}{2}.
\]
Note that $\left\Vert A_{\Sigma_{0}^{i}}\right\Vert _{L^{\infty}\left(B_{\frac{1}{2}r_{i}}\left(P_{i}\right)\right)}\,\leq\,4A_{i}$,
where $A_{i}=\left|A_{\Sigma_{0}^{i}}\left(P_{i}\right)\right|$. 

Let $0<\alpha\leq\frac{1}{4}$ be a small number to be determined.
By passing to a subsequence, we may assume that 
\begin{itemize}
\item Either $\limsup_{i\rightarrow\infty}A_{i}\,d_{\Gamma_{i}}\left(P_{i}\right)<\alpha$
(\textit{Case 1});
\item Or $\limsup_{i\rightarrow\infty}A_{i}\,d_{\Gamma_{i}}\left(P_{i}\right)\geq\alpha$
(\textit{Case 2}).
\end{itemize}
$\mathbf{Case\;1}$ ($\limsup_{i\rightarrow\infty}A_{i}\,d_{\Gamma_{i}}\left(P_{i}\right)<\alpha$):

Choose $\mathring{P}_{i}\in\Gamma_{i}$ so that $\left|\mathring{P}_{i}-P_{i}\right|=d_{\Gamma_{i}}\left(P_{i}\right)$.
Define
\[
\mathcal{P}_{i}=A_{i}\left(P_{i}-\mathring{P}_{i}\right),\quad\hat{\Sigma}_{\tau}^{i}=A_{i}\left(\Sigma_{\tau A_{i}^{-2}}^{i}-\mathring{P}_{i}\right),\quad\hat{U}_{i}=A_{i}\left(U_{i}-\mathring{P}_{i}\right).
\]
Then $\hat{\Gamma}_{i}\coloneqq\partial\hat{U}_{i}=A_{i}\left(\Gamma_{i}-\mathring{P}_{i}\right)$
satisfies $\kappa_{i}-$graph condition, where $\kappa_{i}=\kappa A_{i}^{-1}\leq1$,
and
\[
\left\Vert A_{\hat{\Sigma}_{0}^{i}}\right\Vert _{L^{\infty}\left(B_{\frac{1}{2}r_{i}A_{i}}\left(\mathcal{P}_{i}\right)\right)}\leq4,\quad\left|A_{\hat{\Sigma}_{0}^{i}}\left(\mathcal{P}_{i}\right)\right|=1,
\]
\[
\sup_{\left|\tau\right|\leq A_{i}^{2}}\left\Vert H_{\hat{\Sigma}_{\tau}^{i}}\right\Vert _{L^{\infty}\left(B_{\frac{1}{2}r_{i}A_{i}}\left(\mathcal{P}_{i}\right)\right)}\leq\frac{\Lambda}{A_{i}}\leq1,
\]
\[
\int_{\hat{\Sigma}_{0}^{i}\cap B_{\frac{1}{2}r_{i}A_{i}}\left(\mathcal{P}_{i}\right)}\left|A_{\hat{\Sigma}_{0}^{i}}\right|^{2}d\mathcal{H}^{2}\leq\frac{1}{i}.
\]
Note that $B_{\frac{1}{2}r_{i}A_{i}}\left(\mathcal{P}_{i}\right)\supset B_{1}\left(O\right)$
since $\frac{1}{2}r_{i}A_{i}\geq\frac{K}{4}>\frac{5}{4}$ and $\mathcal{P}_{i}\in B_{\alpha}\left(O\right)$.
By Proposition \ref{Li-Wang curvature estimate}, there exist universal
constants $\delta>0$ and $C>0$ so that 

\[
\sup_{\left|\tau\right|\leq\delta^{2}}\left\Vert A_{\hat{\Sigma}_{\tau}^{i}}\right\Vert _{L^{\infty}\left(B_{\delta}\left(O\right)\right)}\leq C.
\]
It follows, by Proposition \ref{compactness} and passing to a subsequence,
that $\mathcal{P}_{i}\rightarrow\mathcal{P}\in B_{2\alpha}\left(O\right)$
and $\left\{ \hat{\Sigma}_{\tau}^{i}\right\} \rightarrow\left\{ \hat{\Sigma}_{\tau}\right\} $.
The limiting MCF $\left\{ \hat{\Sigma}_{\tau}\right\} $ satisfies
\[
\int_{\hat{\Sigma}_{0}\cap B_{\delta}\left(O\right)}\left|A_{\hat{\Sigma}_{0}}\right|^{2}d\mathcal{H}^{2}=0,\quad\left|A_{\hat{\Sigma}_{0}}\left(\mathcal{P}\right)\right|=1,
\]
which is a contradiction if we choose $\alpha<\frac{\delta}{2}$. 

$\mathbf{Case\;2}$ ($\limsup_{i\rightarrow\infty}A_{i}\,d_{\Gamma_{i}}\left(P_{i}\right)\geq\alpha$):

Let 
\[
\hat{\Sigma}_{\tau}^{i}=\frac{A_{i}}{\alpha}\left(\Sigma_{\tau\left(\frac{\alpha}{A_{i}}\right)^{2}}^{i}-P_{i}\right).
\]
Then we have 
\[
\left\Vert A_{\hat{\Sigma}_{0}^{i}}\right\Vert _{L^{\infty}\left(B_{\frac{1}{2}r_{i}\frac{A_{i}}{\alpha}}\left(O\right)\right)}\leq4\alpha,\quad\left|A_{\hat{\Sigma}_{0}^{i}}\left(O\right)\right|=\alpha,
\]
\[
\sup_{\left|\tau\right|\leq\left(\frac{A_{i}}{\alpha}\right)^{2}}\left\Vert H_{\hat{\Sigma}_{\tau}^{i}}\right\Vert _{L^{\infty}\left(B_{\frac{1}{2}r_{i}\frac{A_{i}}{\alpha}}\left(O\right)\right)}\leq\frac{\alpha\Lambda}{A_{i}}\leq1,
\]
\[
\int_{\hat{\Sigma}_{0}^{i}\cap B_{\frac{1}{2}r_{i}\frac{A_{i}}{\alpha}}\left(O\right)}\left|A_{\hat{\Sigma}_{0}^{i}}\right|^{2}d\mathcal{H}^{2}\leq\frac{1}{i}.
\]
Proposition \ref{Li-Wang curvature estimate} then implies that there
exist $\delta>0$ and $C>0$ so that 

\[
\sup_{\left|\tau\right|\leq\delta^{2}}\left\Vert A_{\hat{\Sigma}_{\tau}^{i}}\right\Vert _{L^{\infty}\left(B_{\delta}\left(O\right)\right)}\leq C.
\]
It follows, by Proposition \ref{compactness}, that a subsequence
of the flows converges to a limiting MCF $\left\{ \hat{\Sigma}_{\tau}\right\} $,
which satisfies
\[
\int_{\hat{\Sigma}_{0}\cap B_{\delta}\left(O\right)}\left|A_{\hat{\Sigma}_{0}}\right|^{2}d\mathcal{H}^{2}=0,\quad\left|A_{\hat{\Sigma}_{0}}\left(O\right)\right|=\alpha,
\]
so we get a contradiction. 
\end{proof}
Setting $K=5$ in Lemma \ref{energy concentration}, we then get the
following corollary. 
\begin{cor}
\label{energy concentration coro}There exists $\epsilon>0$ with
the following property. Let $\left\{ \Sigma_{t}\right\} _{-1\leq t\leq1}$
be a properly embedded $C^{2}$  MCF in $U\subset\mathbb{R}^{3}$
with free boundary on $\Gamma=\partial U$. Suppose that 
\begin{itemize}
\item Either $B_{1}\left(O\right)\subset U$, or $O\in\Gamma$ and $\Gamma$
is mean convex and satisfies the $\kappa-$graph condition for some
$\kappa\leq1$;
\item There is $0<\Lambda\leq1$ so that $\sup_{\left|t\right|\leq1}\left\Vert H_{\Sigma_{t}}\right\Vert _{L^{\infty}\left(B_{1}\left(O\right)\right)}\leq\Lambda$;
\item There holds $\int_{\Sigma_{0}\cap B_{1}\left(O\right)}\left|A_{\Sigma_{0}}\right|^{2}d\mathcal{H}^{2}\leq\epsilon$.
\end{itemize}
Then we have 
\[
\sup\left\{ r\left|A_{\Sigma_{0}}\left(P\right)\right|\,:\,P\in\Sigma_{0},\,B_{r}\left(P\right)\subset B_{1}\left(O\right)\right\} \leq5,
\]
which, in particular, implies $\left\Vert A_{\Sigma_{0}}\right\Vert _{L^{\infty}\left(B_{\frac{1}{2}}\left(O\right)\right)}\leq10.$
\end{cor}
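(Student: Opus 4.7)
The plan is to obtain this as the direct contrapositive of Lemma \ref{energy concentration} specialized to $K = 5$. Let $\epsilon = \epsilon(5) > 0$ be the constant produced by Lemma \ref{energy concentration} for this choice of $K$, and suppose toward a contradiction that the flow $\{\Sigma_t\}_{-1\le t\le 1}$ satisfies all three hypotheses of the corollary but that
\[
\sup\left\{ r\,|A_{\Sigma_{0}}(P)| \,:\, P\in\Sigma_{0},\; B_{r}(P)\subset B_{1}(O)\right\} >5.
\]
Then the three bulleted hypotheses of Lemma \ref{energy concentration} are met verbatim (the mean convexity and $\kappa$-graph condition on $\Gamma$, the bound $\sup_{|t|\le 1}\|H_{\Sigma_t}\|_{L^\infty(B_1(O))}\le\Lambda$, and the curvature-scale condition just displayed). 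The conclusion of that lemma then forces
\[
\int_{\Sigma_{0}\cap B_{1}(O)}|A_{\Sigma_{0}}|^{2}\,d\mathcal{H}^{2}>\epsilon,
\]
contradicting the third bullet of the corollary. This proves the first assertion.

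For the second assertion, fix any $P\in\Sigma_{0}\cap B_{1/2}(O)$. Then $B_{1/2}(P)\subset B_{1}(O)$, so $P$ with $r = \tfrac12$ is admissible in the supremum above, giving $\tfrac12\,|A_{\Sigma_{0}}(P)|\le 5$, i.e. $|A_{\Sigma_{0}}(P)|\le 10$. Taking the supremum over such $P$ yields $\|A_{\Sigma_{0}}\|_{L^{\infty}(B_{1/2}(O))}\le 10$, as claimed. There is no real obstacle here; the work is entirely contained in Lemma \ref{energy concentration}.
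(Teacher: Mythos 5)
Your proof is correct and is exactly the paper's argument: the paper obtains this corollary simply by "setting $K=5$ in Lemma \ref{energy concentration}," i.e.\ by taking the contrapositive, and your derivation of the $L^{\infty}$ bound on $B_{1/2}(O)$ via the admissible choice $r=\tfrac12$ is the intended (and only) step needed for the second assertion.
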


Thanks to Corollary \ref{energy concentration coro}, Proposition
\ref{Li-Wang curvature estimate} can be improved as follows (cf.
\cite{LW}).
\begin{prop}
\label{small energy implies regularity}(Li-Wang's Small Energy Theorem)

There exist $\epsilon>0$ and $C>0$ with the following property.
Let $\left\{ \Sigma_{t}\right\} _{-1\leq t\leq1}$ be a properly embedded
$C^{2}$  MCF in $U\subset\mathbb{R}^{3}$ with free boundary on $\Gamma=\partial U$.
Suppose that 
\begin{itemize}
\item Either $B_{1}\left(O\right)\subset U$, or $O\in\Gamma$ and $\Gamma$
is mean convex and satisfies the $\kappa-$graph condition for some
$0<\kappa\leq1$;
\item There holds $\int_{\Sigma_{0}\cap B_{1}\left(O\right)}\left|A_{\Sigma_{0}}\right|^{2}d\mathcal{H}^{2}\leq\epsilon$;
\item There is $0<\Lambda\leq1$ so that $\sup_{\left|t\right|\leq1}\left\Vert H_{\Sigma_{t}}\right\Vert _{L^{\infty}\left(B_{1}\left(O\right)\right)}\leq\Lambda$.
\end{itemize}
Then we have 
\[
\sup_{\left|t\right|\leq\min\left\{ \frac{1}{4},\,\frac{\epsilon}{4\Lambda}\right\} }\left\Vert A_{\Sigma_{t}}\right\Vert _{L^{\infty}\left(B_{\frac{\epsilon}{2}}\left(O\right)\right)}\leq C.
\]
\end{prop}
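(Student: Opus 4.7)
The proof should be essentially a two-step chain in which Corollary \ref{energy concentration coro} converts the integral hypothesis into a pointwise hypothesis, and Proposition \ref{Li-Wang curvature estimate} then propagates that pointwise bound forward and backward in time. In this sense the proposition is a purely formal consequence of what has already been established; the work has been done in Lemma \ref{energy concentration} and Proposition \ref{Li-Wang curvature estimate}, and here we only have to align the constants.

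More concretely, let $\epsilon_{0}>0$ be the constant produced by Corollary \ref{energy concentration coro}. If we require that the $\epsilon$ in our hypothesis satisfies $\epsilon\leq\epsilon_{0}$, then the $L^{2}$ smallness assumption together with the mean curvature bound $\|H_{\Sigma_{t}}\|_{L^{\infty}(B_{1}(O))}\leq\Lambda\leq1$ yields
\[
\sup\bigl\{r\,|A_{\Sigma_{0}}(P)|\,:\,P\in\Sigma_{0},\,B_{r}(P)\subset B_{1}(O)\bigr\}\leq 5.
\]
In particular $\|A_{\Sigma_{0}}\|_{L^{\infty}(B_{1/2}(O))}\leq 10$. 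This is exactly the type of quantitative ``$K$-bound'' on the second fundamental form at time $0$ that guarantees, for any $P\in\Sigma_{0}\cap B_{1/2}(O)$, that $(\Sigma_{0}\cap B_{\delta/K}(P))_{P}$ is a $\delta$-Lipschitz graph with $K=10$, which is exactly the second bullet in the hypotheses of Proposition \ref{Li-Wang curvature estimate}.

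The remaining two hypotheses of Proposition \ref{Li-Wang curvature estimate} are already built into our assumptions: the alternative on $B_{1}(O)$ versus the $\kappa$-graph condition is identical, and the mean curvature bound is the same. Applying Proposition \ref{Li-Wang curvature estimate} with this $K=10$ produces constants $\delta_{1}>0$ and $C_{1}>0$ (depending only on $\delta$ and $C$ there) such that
\[
\sup_{|t|\leq\min\{\frac{1}{4},\,\frac{\delta_{1}}{40\Lambda}\}}\,\|A_{\Sigma_{t}}\|_{L^{\infty}(B_{\delta_{1}/20}(O))}\,\leq\,\frac{20\,C_{1}}{\delta_{1}}.
\]

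Finally, to convert this into the statement as written, we set $\epsilon\coloneqq\min\{\epsilon_{0},\,\delta_{1}/10\}$ and $C\coloneqq 20C_{1}/\delta_{1}$. Then $\epsilon/2\leq\delta_{1}/20$ and $\epsilon/(4\Lambda)\leq\delta_{1}/(40\Lambda)$, so the inclusion $B_{\epsilon/2}(O)\subset B_{\delta_{1}/20}(O)$ and the inequality on time intervals give the claimed conclusion. There is no real obstacle here; the only thing that requires care is verifying that the Lipschitz graph condition supplied by Corollary \ref{energy concentration coro} meshes with the scale $\delta/K$ demanded by Proposition \ref{Li-Wang curvature estimate}, and this is automatic once one observes that the $K$ in both statements can be taken to be the universal constant $10$ (or $5$) coming from the small-energy corollary.
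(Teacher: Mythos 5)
Your proposal is correct and is exactly the argument the paper intends: the paper gives no separate proof, stating only that the proposition follows by combining Corollary \ref{energy concentration coro} (which converts the small $L^{2}$ energy into the pointwise bound $\left\Vert A_{\Sigma_{0}}\right\Vert _{L^{\infty}\left(B_{1/2}\left(O\right)\right)}\leq10$ and hence the $\delta$-Lipschitz graph hypothesis) with Proposition \ref{Li-Wang curvature estimate}. Your bookkeeping of the constants ($K=10$, $\epsilon=\min\{\epsilon_{0},\delta_{1}/10\}$, $C=20C_{1}/\delta_{1}$) is consistent with the stated conclusion.
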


\section{\label{Hypotheses}Hypotheses}

In this section we will specify the hypotheses of our main theorem.
From now on, let $\left\{ \boldsymbol{\Sigma}_{t}\right\} _{0\leq t<T}$
be a compact, embedded $C^{2}$  MCF in $\boldsymbol{U}\subset\mathbb{R}^{3}$
with free boundary on $\boldsymbol{\Gamma}=\partial\boldsymbol{U}$,
where $T>0$ is a finite constant. We assume that 
\begin{itemize}
\item $\boldsymbol{\Gamma}=\partial\boldsymbol{U}$ is a properly embedded
$C^{3,1}$ surface which satisfies the $\boldsymbol{\kappa}-$graph
condition for some $\boldsymbol{\kappa}>0$ (see Definition \ref{kappa graph condition})
and is mean convex, i.e. 
\begin{equation}
H_{\boldsymbol{\Gamma}}=-\nabla_{\boldsymbol{\Gamma}}\cdot\boldsymbol{\nu}\geq0,\label{mean convex barrier}
\end{equation}
where $\boldsymbol{\nu}$ is the inward, unit normal vector of $\boldsymbol{\Gamma}$; 
\item The mean curvature of $\left\{ \boldsymbol{\Sigma}_{t}\right\} _{0\leq t<T}$
is uniformly bounded, i.e. 
\begin{equation}
\sup_{0\leq t<T}\left\Vert H_{\boldsymbol{\Sigma}_{t}}\right\Vert _{L^{\infty}}\leq\boldsymbol{\Lambda}<\infty;\label{mean curvature bound}
\end{equation}
\item The perimeter of $\left\{ \boldsymbol{\Sigma}_{t}\right\} _{0\leq t<T}$
is uniformly bounded, i.e. 
\begin{equation}
\sup_{0\leq t<T}\mathcal{H}^{1}\left(\partial\boldsymbol{\Sigma}_{t}\right)\leq\boldsymbol{l}<\infty.\label{boundary length bound}
\end{equation}
\end{itemize}
Note that in order to distinguish from the generic MCF that appeared
in the previous sections, we use boldface to denote the specific MCF
in the main theorem. Also, in the proof we only need $\boldsymbol{\Gamma}$
to satisfy the $\boldsymbol{\kappa}-$graph condition in the region
where the flow exists (i.e. the support of the flow). Actually, the
support of the flow is bounded as $\boldsymbol{\Sigma}_{0}$ is compact
and its mean curvature stays uniformly bound. Since every properly
embedded $C^{3,1}$ surface locally satisfies the $\boldsymbol{\kappa}-$graph
condition (with $\boldsymbol{\kappa}$ depending on the given bounded
region), one can regard this condition as a byproduct of the other
conditions. 

The goal of this paper is to show that the second fundamental form
of $\left\{ \boldsymbol{\Sigma}_{t}\right\} _{0\leq t<T}$ is uniformly
bounded; whence, by \cite{S} the flow can be extended (see Theorem
\ref{main thm}). The proof begins in this section and will be completed
in Section \ref{Proof of the Main Theorem}. For the rest of this
section, we will first show that the $L^{2}$ norm of the second fundamental
form is uniformly bounded. Then we will use that to prove the condensation
compactness theorem for sequences of parabolic rescaling of the flow.
\begin{lem}
(Uniformly Bounded Energy)

The $L^{2}$ norm of the second fundamental form is uniformly bounded.
More precisely, there holds
\begin{equation}
\int_{\boldsymbol{\Sigma}_{t}}\left|A_{\boldsymbol{\Sigma}_{t}}\right|^{2}\,d\mathcal{H}^{2}\,\leq\,C\left(\boldsymbol{\Lambda},\boldsymbol{\kappa},\boldsymbol{l},\mathcal{H}^{2}\left(\boldsymbol{\Sigma}_{0}\right),\chi\left(\boldsymbol{\Sigma}_{0}\right)\right)\label{energy bound}
\end{equation}
for $0\leq t<T$.
\end{lem}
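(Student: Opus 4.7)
The plan is to exploit the Gauss--Bonnet theorem, the mean convexity of $\boldsymbol{\Gamma}$, and the $L^{\infty}$ bound on $H_{\boldsymbol{\Sigma}_t}$ in tandem, together with the standard area identity $|A|^{2}=H^{2}-2K$ valid for any surface in $\mathbb{R}^3$ (writing $K$ for Gaussian curvature). Integrating this identity gives
\[
\int_{\boldsymbol{\Sigma}_{t}}|A_{\boldsymbol{\Sigma}_{t}}|^{2}\,d\mathcal{H}^{2}=\int_{\boldsymbol{\Sigma}_{t}}H_{\boldsymbol{\Sigma}_{t}}^{2}\,d\mathcal{H}^{2}-2\int_{\boldsymbol{\Sigma}_{t}}K_{\boldsymbol{\Sigma}_{t}}\,d\mathcal{H}^{2},
\]
so it suffices to control each of the two terms on the right.

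For the first term, I would use the evolution formula $\frac{d}{dt}\mathcal{H}^{2}(\boldsymbol{\Sigma}_{t})=-\int_{\boldsymbol{\Sigma}_{t}}H_{\boldsymbol{\Sigma}_{t}}^{2}\,d\mathcal{H}^{2}$ for MCF with free boundary (the boundary contribution vanishes thanks to the orthogonality condition $N_{\boldsymbol{\Sigma}_{t}}\cdot\boldsymbol{\nu}=0$). This shows that the area is non-increasing, hence $\mathcal{H}^{2}(\boldsymbol{\Sigma}_{t})\leq\mathcal{H}^{2}(\boldsymbol{\Sigma}_{0})$ for every $t\in[0,T)$, and combining with $\|H_{\boldsymbol{\Sigma}_{t}}\|_{L^{\infty}}\leq\boldsymbol{\Lambda}$ yields $\int_{\boldsymbol{\Sigma}_{t}}H_{\boldsymbol{\Sigma}_{t}}^{2}\leq\boldsymbol{\Lambda}^{2}\mathcal{H}^{2}(\boldsymbol{\Sigma}_{0})$.

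For the second term, Gauss--Bonnet on the compact $C^{2}$ surface $\boldsymbol{\Sigma}_{t}$ with boundary gives
\[
\int_{\boldsymbol{\Sigma}_{t}}K_{\boldsymbol{\Sigma}_{t}}\,d\mathcal{H}^{2}+\int_{\partial\boldsymbol{\Sigma}_{t}}k_{g}\,d\mathcal{H}^{1}=2\pi\,\chi(\boldsymbol{\Sigma}_{t}).
\]
The Euler characteristic is a topological invariant: since $\{\boldsymbol{\Sigma}_{t}\}_{0\leq t<T}$ is a $C^{2}$ family, the surfaces are all diffeomorphic, so $\chi(\boldsymbol{\Sigma}_{t})=\chi(\boldsymbol{\Sigma}_{0})$. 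The geodesic curvature of $\partial\boldsymbol{\Sigma}_{t}$ inside $\boldsymbol{\Sigma}_{t}$ is precisely the term that needs care: the free boundary condition $N_{\boldsymbol{\Sigma}_{t}}\cdot\boldsymbol{\nu}=0$ forces $\boldsymbol{\nu}$ (restricted to $\partial\boldsymbol{\Sigma}_{t}$) to lie in $T\boldsymbol{\Sigma}_{t}$ and to be the unit conormal of $\partial\boldsymbol{\Sigma}_{t}$ in $\boldsymbol{\Sigma}_{t}$. If $T$ denotes the unit tangent to $\partial\boldsymbol{\Sigma}_{t}$, then since $T\cdot\boldsymbol{\nu}=0$ along $\partial\boldsymbol{\Sigma}_{t}\subset\boldsymbol{\Gamma}$,
\[
k_{g}=\langle \nabla_{T}T,\boldsymbol{\nu}\rangle=-\langle T,\nabla_{T}\boldsymbol{\nu}\rangle=A_{\boldsymbol{\Gamma}}(T,T),
\]
and the $\boldsymbol{\kappa}$-graph hypothesis on $\boldsymbol{\Gamma}$ gives $|A_{\boldsymbol{\Gamma}}|\lesssim\boldsymbol{\kappa}$, so
\[
\Bigl|\int_{\partial\boldsymbol{\Sigma}_{t}}k_{g}\,d\mathcal{H}^{1}\Bigr|\lesssim \boldsymbol{\kappa}\,\mathcal{H}^{1}(\partial\boldsymbol{\Sigma}_{t})\leq \boldsymbol{\kappa}\,\boldsymbol{l}.
\]

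Assembling the three bounds gives
\[
\int_{\boldsymbol{\Sigma}_{t}}|A_{\boldsymbol{\Sigma}_{t}}|^{2}\,d\mathcal{H}^{2}\leq \boldsymbol{\Lambda}^{2}\mathcal{H}^{2}(\boldsymbol{\Sigma}_{0})+C\boldsymbol{\kappa}\,\boldsymbol{l}+4\pi|\chi(\boldsymbol{\Sigma}_{0})|,
\]
which is (\ref{energy bound}). The only subtle point is the identification $k_{g}=A_{\boldsymbol{\Gamma}}(T,T)$; everything else is an immediate combination of the imposed hypotheses, Gauss--Bonnet, and topological invariance of $\chi$ under the smooth deformation $\{\boldsymbol{\Sigma}_{t}\}$.
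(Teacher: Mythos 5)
Your proposal is correct and follows essentially the same route as the paper: the area bound from $\partial_t\,d\mu_t=-H^2\,d\mu_t$, the Gauss--Bonnet theorem with the identification $k_g=A_{\boldsymbol{\Gamma}}(T,T)$ coming from the orthogonality condition, the Gauss equation $|A|^2=H^2-2K$, and topological invariance of $\chi$. No gaps.
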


\begin{proof}
Using the normal parametrization of the flow, i.e. 
\[
X_{t}=X\left(\cdot,t\right):M^{2}\times\left[0,T\right)\rightarrow\boldsymbol{U}\subset\mathbb{R}^{3},\qquad\partial_{t}X_{t}=\overrightarrow{H}_{\boldsymbol{\Sigma}_{t}},
\]
we have
\[
\partial_{t}\,d\boldsymbol{\mu}_{t}=-H_{\boldsymbol{\Sigma}_{t}}^{2}d\boldsymbol{\mu}_{t},
\]
where $d\boldsymbol{\mu}_{t}$ is the pull-back measure of $\boldsymbol{\Sigma}_{t}$
on $M$. It follows that
\begin{equation}
\sup_{0\leq t<T}\mathcal{H}^{2}\left(\boldsymbol{\Sigma}_{t}\right)\leq\mathcal{H}^{2}\left(\boldsymbol{\Sigma}_{0}\right)<\infty.\label{area bound}
\end{equation}
Next, for each $P\in\boldsymbol{U}\cup\boldsymbol{\Gamma}$, let 
\[
r_{P}=\left\{ \begin{array}{c}
d_{\boldsymbol{\Gamma}}\left(P\right),\quad\textrm{if}\quad P\in\boldsymbol{U}\\
\frac{1}{\sqrt{2}}\left(\frac{3}{320}\right)^{\frac{5}{2}}\boldsymbol{\kappa}^{-1},\quad\textrm{if}\quad P\in\boldsymbol{\Gamma}
\end{array}\right..
\]
Lemma \ref{monotonicity formula for MCF} yields the following area
ratio estimate (cf. \cite{E,K}):
\begin{equation}
\frac{\mathcal{H}^{2}\left(\boldsymbol{\Sigma}_{t}\cap B_{R\sqrt{T-t}}\left(P\right)\right)}{R^{2}\left(T-t\right)}\leq C\left(\boldsymbol{\kappa}\right)\frac{\mathcal{H}^{2}\left(\boldsymbol{\Sigma}_{T-\frac{5}{14}r_{P}^{2}}\cap B_{r_{P}}\left(P\right)\right)}{r_{P}^{2}}\leq C\left(\boldsymbol{\kappa},\mathcal{H}^{2}\left(\boldsymbol{\Sigma}_{0}\right),r_{P}\right)\label{area ratio bound}
\end{equation}
for $R>1$, $T-\frac{1}{10}\left(\frac{r_{P}}{R}\right)^{2}<t<T$.

By the Gauss-Bonnet theorem, for each $t\in\left[0,T\right)$ we have
\[
\int_{\boldsymbol{\Sigma}_{t}}K_{\boldsymbol{\Sigma}_{t}}\,d\mathcal{H}^{2}+\int_{\gamma_{t}}\vec{k}_{\gamma_{t}}\cdot\boldsymbol{\nu}\,d\mathcal{H}^{1}=2\pi\chi\left(\boldsymbol{\Sigma}_{t}\right),
\]
where $K_{\boldsymbol{\Sigma}_{t}}$ is the Gauss curvature of $\boldsymbol{\Sigma}_{t}$,
\[
\gamma_{t}\coloneqq\partial\boldsymbol{\Sigma}_{t}=\boldsymbol{\Sigma}_{t}\cap\boldsymbol{\Gamma}
\]
is the boundary curve, $\vec{k}_{\gamma_{t}}=D_{T_{\gamma_{t}}}T_{\gamma_{t}}$
is the curvature vector of $\gamma_{t}$ in $\mathbb{R}^{3}$, $T_{\gamma_{t}}$
is the unit tangent vector of $\gamma_{t}$, and $\chi\left(\boldsymbol{\Sigma}_{t}\right)$
is the Euler characteristic of $\boldsymbol{\Sigma}_{t}$. Note that
\[
K_{\boldsymbol{\Sigma}_{t}}=\frac{1}{2}\left(H_{\boldsymbol{\Sigma}_{t}}^{2}-\left|A_{\boldsymbol{\Sigma}_{t}}\right|^{2}\right),
\]
\[
\vec{k}_{\gamma_{t}}\cdot\boldsymbol{\nu}=D_{T_{\gamma_{t}}}T_{\gamma_{t}}\cdot\boldsymbol{\nu}=-T_{\gamma_{t}}\cdot D_{T_{\gamma_{t}}}\boldsymbol{\nu}=A_{\boldsymbol{\Gamma}}\left(T_{\gamma_{t}},T_{\gamma_{t}}\right),
\]
\[
\chi\left(\boldsymbol{\Sigma}_{t}\right)=\chi\left(\boldsymbol{\Sigma}_{0}\right).
\]
It follows from (\ref{mean curvature bound}), (\ref{boundary length bound})
and (\ref{area bound}) that 
\[
\int_{\boldsymbol{\Sigma}_{t}}\left|A_{\boldsymbol{\Sigma}_{t}}\right|^{2}\,d\mathcal{H}^{2}=\int_{\boldsymbol{\Sigma}_{t}}H_{\boldsymbol{\Sigma}_{t}}^{2}\,d\mathcal{H}^{2}+2\int_{\gamma_{t}}A_{\boldsymbol{\Gamma}}\left(T_{\gamma_{t}},T_{\gamma_{t}}\right)\,d\mathcal{H}^{1}-4\pi\chi\left(\boldsymbol{\Sigma}_{0}\right)
\]
\[
\lesssim\boldsymbol{\Lambda}^{2}\mathcal{H}^{2}\left(\boldsymbol{\Sigma}_{0}\right)+2\boldsymbol{\kappa}\boldsymbol{l}-4\pi\chi\left(\boldsymbol{\Sigma}_{0}\right).
\]
\end{proof}
The following remark points out that the energy concentrates at only
finitely many points; hence there are at most finitely many singularities
at time $T$.
\begin{rem}
\label{isolated singularities}Let 
\[
\omega_{t}=\left|A_{\boldsymbol{\Sigma}_{t}}\right|^{2}\lfloor\,d\mathcal{H}^{2}.
\]
By (\ref{energy bound}) and compactness, given any sequence $t_{i}\nearrow T$,
there is a subsequence (still denoted by $\left\{ t_{i}\right\} $
for simplicity of notations) so that $\omega_{t_{i}}\rightharpoonup\omega$
in the sense of Radon measure in $\mathbb{R}^{3}$. It follows from
the regularity of Radon measures that $\omega_{t_{i}}\left(\mathcal{B}\right)\rightarrow\omega\left(\mathcal{B}\right)$
for any bounded Borel set $\mathcal{B}$ satisfying $\omega\left(\partial\mathcal{B}\right)=0$.
As a result (together with the local finiteness of $\omega$), we
have 
\[
\omega_{t_{i}}\left(B_{r}\left(P\right)\right)\rightarrow\omega\left(B_{r}\left(P\right)\right)
\]
for every $P\in\mathbb{R}^{3}$ and almost every $r>0$. Let's define
\begin{equation}
\mathcal{S}=\left\{ P\in\mathbb{R}^{3}\left|\,\lim_{i\rightarrow\infty}\omega_{t_{i}}\left(B_{r}\left(P\right)\right)\geq\epsilon\quad\forall\;r>0\right.\right\} ,\label{singularities}
\end{equation}
where $\epsilon$ is the constant in Proposition \ref{small energy implies regularity}.
Note that 
\begin{equation}
\mathcal{H}^{0}\left(\mathcal{S}\right)\leq\frac{1}{\epsilon}\,\limsup_{t\nearrow T}\int_{\boldsymbol{\Sigma}_{t}}\left|A_{\boldsymbol{\Sigma}_{t}}\right|^{2}\,d\mathcal{H}^{2}\leq C\left(\boldsymbol{\Lambda},\boldsymbol{\kappa},\boldsymbol{l},\mathcal{H}^{2}\left(\boldsymbol{\Sigma}_{0}\right),\chi\left(\boldsymbol{\Sigma}_{0}\right)\right)\label{number of singularities}
\end{equation}
by (\ref{energy bound}). By Proposition \ref{small energy implies regularity}
and Proposition \ref{local graph thm}, one can conclude that for
any limit point $P$ of $\left\{ \boldsymbol{\Sigma}_{t}\right\} $
(as $t\nearrow T$) which is not in $\mathcal{S}$, there exists $r>0$
so that $\left\{ \boldsymbol{\Sigma}_{t}\cap B_{r}\left(P\right)\right\} $
is $C^{2,1}$ up to time $T$. Therefore, there are at most finitely
many singularities of $\left\{ \boldsymbol{\Sigma}_{t}\right\} $
as $t\nearrow T$ . 

Below we consider the parabolic rescaling of $\left\{ \boldsymbol{\Sigma}_{t}\right\} $
about a singular point on the boundary and prove the condensation
compactness theorem.
\end{rem}

\begin{prop}
\label{condensation compactness for MCF}(Condensation Compactness)

Given $P\in\mathcal{S}\cap\boldsymbol{\Gamma}$ and a sequence$\left\{ \lambda_{i}\searrow0\right\} _{i\in\mathbb{N}}$.
Let 
\begin{equation}
\boldsymbol{\Sigma}_{\tau}^{\left(P,T\right),\lambda_{i}}=\frac{1}{\lambda_{i}}\left(\boldsymbol{\Sigma}_{T+\lambda_{i}^{2}\tau}-P\right),\quad-\frac{T}{\lambda_{i}^{2}}\leq\tau<0\label{parabolic rescaling}
\end{equation}
and 
\[
\boldsymbol{U}^{P,\lambda_{i}}=\frac{1}{\lambda_{i}}\left(\boldsymbol{U}-P\right),\quad\boldsymbol{\Gamma}^{P,\lambda_{i}}=\partial\boldsymbol{U}^{P,\lambda_{i}}=\frac{1}{\lambda_{i}}\left(\boldsymbol{\Gamma}-P\right).
\]
Then there exist a half plane $\Pi$ which meets $\lim_{i\rightarrow\infty}\boldsymbol{\Gamma}^{P,\lambda_{i}}\simeq\mathbb{R}^{2}$
orthogonally, a finite set $\mathfrak{S}_{P}\subset\Pi$, and an integer
$m\in\mathbb{N}$ so that a subsequence of (\ref{parabolic rescaling})
converges to $\left\{ \Pi\right\} _{-\infty<\tau<0}$ with multiplicity
$m$ away from $\mathfrak{S}_{P}\times\left(-\infty,0\right)$. Moreover,
we have 
\[
\mathcal{H}^{0}\left(\mathfrak{S}_{P}\right)\leq C\left(\boldsymbol{\Lambda},\boldsymbol{\kappa},\boldsymbol{l},\mathcal{H}^{2}\left(\boldsymbol{\Sigma}_{0}\right),\chi\left(\boldsymbol{\Sigma}_{0}\right)\right).
\]

An analogous result holds for $P\in\mathcal{S}\cap\boldsymbol{U}$,
in which case $\Pi$ is a plane. 
\end{prop}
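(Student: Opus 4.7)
The plan is to combine the small-energy regularity theorem (Proposition \ref{small energy implies regularity}) with the MCF compactness result (Proposition \ref{compactness}) to extract a subsequence converging smoothly away from a finite spatial concentration set, and then use Huisken's monotonicity formula (Lemma \ref{monotonicity formula for MCF}) to identify the limit as a static minimal cone, hence a half-plane.

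To begin, each rescaled flow $\{\boldsymbol{\Sigma}_\tau^{(P,T),\lambda_i}\}$ is a properly embedded $C^{2}$ MCF in $\boldsymbol{U}^{P,\lambda_i}$ with free boundary on $\boldsymbol{\Gamma}^{P,\lambda_i}$. By Remark \ref{scale-preserving}, $\boldsymbol{\Gamma}^{P,\lambda_i}$ satisfies the $(\lambda_i\boldsymbol{\kappa})$-graph condition and converges in $C^{3}_{\textrm{loc}}$ to the tangent plane $T_{P}\boldsymbol{\Gamma}\simeq\mathbb{R}^{2}$. The rescaled mean curvature satisfies $\|H\|_{L^{\infty}}\le\lambda_{i}\boldsymbol{\Lambda}\to 0$; local area bounds follow from (\ref{area ratio bound}); and the scale-invariant global $L^{2}$ norm of $A$ is uniformly bounded by (\ref{energy bound}). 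After a diagonal extraction over a countable dense set of times $\tau<0$, the Radon measures $\omega_{\tau}^{i}=|A_{\boldsymbol{\Sigma}_{\tau}^{i}}|^{2}\lfloor d\mathcal{H}^{2}$ converge weakly to limits $\omega_{\tau}$, and I set
\[
\mathfrak{S}_P=\bigl\{Q\in\mathbb{R}^{3}:\exists\,\tau<0\text{ with }\omega_{\tau}(\{Q\})\geq\epsilon\bigr\},
\]
where $\epsilon$ is the constant of Proposition \ref{small energy implies regularity}. By (\ref{energy bound}) and the argument of Remark \ref{isolated singularities}, $|\mathfrak{S}_P|$ is bounded by the stated constant (and can be shown to be time-independent once the limit below is known to be static).

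At every point outside $\mathfrak{S}_P\times(-\infty,0)$, the local $L^{2}$ energy is eventually below $\epsilon$, so Proposition \ref{small energy implies regularity} yields a uniform $C^{2,1}$ curvature bound in a spacetime neighborhood. Proposition \ref{compactness} (after a further diagonal extraction) then supplies convergence, in the sense of Definition \ref{topology} and with some finite multiplicity $m$, to a limit MCF on compact subsets of the complement of $\mathfrak{S}_P\times(-\infty,0)$. Since $\|H\|\to 0$, the limit has zero mean curvature and hence is static in $\tau$; denote the limit surface by $\Pi$. Under the parabolic rescaling, the boundary-type Huisken integral of Lemma \ref{monotonicity formula for MCF} evaluated at $(\boldsymbol{0},0)$ on $\{\boldsymbol{\Sigma}_\tau^{(P,T),\lambda_i}\}$ agrees, up to a correction factor tending to $1$ as $\lambda_{i}\boldsymbol{\kappa}\to 0$, with the original Huisken integral at $(P,T)$ evaluated at scale $\lambda_{i}\sqrt{-\tau}$, and therefore converges to $\Theta_{\{\boldsymbol{\Sigma}_{t}\}}(P,T)$ independently of $\tau$. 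Passing to the limit in the monotonicity inequality forces equality, which combined with $H_{\Pi}=0$ yields $X\cdot N_{\Pi}=0$; thus $\Pi$ is a minimal cone with apex at the origin.

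Finally, the singular set of any cone is a union of rays from its apex, whereas the singular set of $\Pi$ lies in the finite set $\mathfrak{S}_P$; hence $\Pi$ is smooth away from the origin. The classical classification of smooth $2$-dimensional minimal cones in $\mathbb{R}^{3}$ then forces $\Pi$ to be a union of (half-)planes through the origin, meeting $\mathbb{R}^{2}$ orthogonally in the free boundary case. Two distinct such half-planes would intersect along a half-line of singular points, contradicting the finiteness of $\mathfrak{S}_P$; hence $\Pi$ is a single half-plane with multiplicity $m$, and the interior case $P\in\mathcal{S}\cap\boldsymbol{U}$ is entirely analogous. The main obstacle is verifying that the bad set $\mathfrak{S}_P$ is simultaneously finite and time-independent and captures every defect of smooth convergence; this is handled by coupling the weak convergence of the energy measures with the static nature of the limit via the small-energy theorem.
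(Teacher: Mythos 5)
Your proposal follows essentially the same route as the paper's proof: rescale, observe that $\left\Vert H\right\Vert _{L^{\infty}}\leq\lambda_{i}\boldsymbol{\Lambda}\rightarrow0$ together with the area ratio bound (\ref{area ratio bound}) and the scale-invariant energy bound (\ref{energy bound}), define the concentration set by the small-energy threshold of Proposition \ref{small energy implies regularity}, invoke Proposition \ref{compactness} to get finite-multiplicity convergence to a static minimal limit away from that set, and use Lemma \ref{monotonicity formula for MCF} to force the self-shrinker equation $X\cdot N_{\Pi}=0$, whence $\Pi$ is a (half-)plane. The only notable difference is bookkeeping: the paper defines $\mathfrak{S}_{P}$ from the energy measures of the single slice $\tau=-1$, which makes its finiteness immediate from (\ref{energy bound}), whereas you take a union over a dense set of times and must then argue separately (as you acknowledge) that this union is finite and time-independent.
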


\begin{proof}
Throughout the proof, we will assume that $P\in\mathcal{S}\cap\boldsymbol{\Gamma}$.
The result for $P\in\mathcal{S}\cap\boldsymbol{U}$ follows from the
same argument. 

Firstly, note that $\left\{ \boldsymbol{\Sigma}_{\tau}^{\left(P,T\right),\lambda_{i}}\right\} $
is a MCF in $\boldsymbol{U}^{P,\lambda_{i}}$ with free boundary on
$\boldsymbol{\Gamma}^{P,\lambda_{i}}$, which is mean convex and satisfies
$\lambda_{i}\boldsymbol{\kappa}-$graph condition. Also, by (\ref{area ratio bound}),
(\ref{mean curvature bound}) and (\ref{energy bound}), we have
\begin{equation}
\sup_{R>1}\,\sup_{-\frac{1}{10}\left(\frac{r_{p}}{\lambda_{i}R}\right)^{2}<\tau<0}\frac{\mathcal{H}^{2}\left(\boldsymbol{\Sigma}_{\tau}^{\left(P,T\right),\lambda_{i}}\cap B_{R\sqrt{-\tau}}\left(O\right)\right)}{R^{2}\left(-\tau\right)}\leq C\left(\boldsymbol{\kappa},\mathcal{H}^{2}\left(\boldsymbol{\Sigma}_{0}\right),r_{p}\right),\label{rescaled area ratio bound}
\end{equation}
\begin{equation}
\sup_{-\frac{T}{\lambda_{i}^{2}}\leq\tau<0}\left\Vert H_{\boldsymbol{\Sigma}_{\tau}^{\left(P,T\right),\lambda_{i}}}\right\Vert _{L^{\infty}}\leq\lambda_{i}\boldsymbol{\Lambda},\label{rescaled mean curvature bound}
\end{equation}
\[
\int_{\boldsymbol{\Sigma}_{-1}^{\left(P,T\right),\lambda_{i}}}\left|A_{\boldsymbol{\Sigma}_{-1}^{\left(P,T\right),\lambda_{i}}}\right|^{2}\,d\mathcal{H}^{2}\leq C\left(\boldsymbol{\Lambda},\boldsymbol{\kappa},\boldsymbol{l},\mathcal{H}^{2}\left(\boldsymbol{\Sigma}_{0}\right),\chi\left(\boldsymbol{\Sigma}_{0}\right)\right).
\]
By a similar argument as in Remark \ref{isolated singularities},
the set
\begin{equation}
\mathfrak{S}_{P}\coloneqq\left\{ Q\in\mathbb{R}^{3}\left|\,\lim_{i\rightarrow\infty}\int_{\boldsymbol{\Sigma}_{-1}^{\left(P,T\right),\lambda_{i}}\cap B_{r}\left(Q\right)}\left|A_{\boldsymbol{\Sigma}_{-1}^{\left(P,T\right),\lambda_{i}}}\right|^{2}\,d\mathcal{H}^{2}\left(B_{r}\left(Q\right)\right)\geq\epsilon\quad\forall\;r>0\right.\right\} \label{energy-concentrated points}
\end{equation}
is finite, where $\epsilon$ is the constant in Proposition \ref{small energy implies regularity},
and 
\[
\mathcal{H}^{0}\left(\mathfrak{S}_{P}\right)\leq C\left(\boldsymbol{\Lambda},\boldsymbol{\kappa},\boldsymbol{l},\mathcal{H}^{2}\left(\boldsymbol{\Sigma}_{0}\right),\chi\left(\boldsymbol{\Sigma}_{0}\right)\right).
\]
It follows, by Proposition \ref{small energy implies regularity},
(\ref{rescaled area ratio bound}), (\ref{rescaled mean curvature bound})
and Proposition \ref{compactness}, that $\left\{ \boldsymbol{\Sigma}_{\tau}^{\left(P,T\right),\lambda_{i}}\right\} $
converges with finite multiplicity to a minimal surface $\left\{ \Pi\right\} _{-\infty<\tau<0}$
away from $\mathfrak{S}_{P}$. Note that
\[
\boldsymbol{U}^{P,\lambda_{i}}\rightarrow\mathbb{R}_{+}^{3},\quad\boldsymbol{\Gamma}^{P,\lambda_{i}}\rightarrow\partial\mathbb{R}_{+}^{3}\simeq\mathbb{R}^{2}
\]
and $\Pi$ is orthogonal to $\lim_{i\rightarrow\infty}\boldsymbol{\Gamma}^{P,\lambda_{i}}$.
Furthermore, Lemma \ref{monotonicity formula for MCF} implies that
the limiting surface $\Pi$ satisfies the self-shrinker equation
\[
H_{\Pi}+\frac{1}{2}X\cdot N_{\Pi}=0
\]
(cf. \cite{I,B}). Consequently, the minimal surface $\Pi$ must be
flat. 
\end{proof}

\section{\label{Multiplicity One Convergence}Unity of Huisken's Density }

The goal of this section is to show the unity of Huisken's density
of $\left\{ \boldsymbol{\Sigma}_{t}\right\} _{0\leq t<T}$ at time
$T$. By ``unity'' we mean that it is one for the interior limit
points and one half for the boundary limit points (see Proposition
\ref{unit Huisken density}). We will follow closely the procedure
in \cite{LW} to prove that. Our discussion will focus on the boundary
limit points since the arguments are similar for the interior limit
points. 

In order to prove the unity of Huisken's density, we will work with
the normalized MCF defined in (\ref{NMCF}). As a consequence of Proposition
\ref{condensation compactness for MCF}, any sequence of time-slices
of (\ref{NMCF}) would converge to a half-plane with multiplicity
away from (at most) finitely many singularities (see Lemma \ref{condensation compactness for NMCF}).
Following the idea of \cite{LW}, we will choose special sequences
and use that to prove the unity of Huisken's density by contradiction
(see Proposition \ref{delta pinching sequence}, Lemma \ref{Harnack estimate}
and Proposition \ref{unit Huisken density}). It then follows from
Allard's regularity theorem (cf. \cite{Al}) and Proposition \ref{Li-Wang curvature estimate}
that the convergence in Proposition \ref{condensation compactness for MCF}
is of multiplicity one and without singularities (see Corollary \ref{flat tangent flow}).

Let's begin our discussion with the parabolic rescaling defined in
(\ref{parabolic rescaling}). Given $P\in\mathcal{S}\cap\boldsymbol{\Gamma}$
and a sequence $\left\{ t_{i}\nearrow T\right\} _{i\in\mathbb{N}}$,
let $\lambda_{i}=\sqrt{T-t_{i}}$ and
\[
\boldsymbol{\Sigma}_{\tau}^{\left(P,T\right),\lambda_{i}}=\frac{1}{\lambda_{i}}\left(\boldsymbol{\Sigma}_{T+\lambda_{i}^{2}\tau}-P\right)\quad\textrm{for}\quad-\frac{T}{\lambda_{i}^{2}}\leq\tau<0,
\]
\[
\boldsymbol{U}^{P,\lambda_{i}}=\frac{1}{\lambda_{i}}\left(\boldsymbol{U}-P\right),\quad\boldsymbol{\Gamma}^{P,\lambda_{i}}=\partial\boldsymbol{U}^{P,\lambda_{i}}=\frac{1}{\lambda_{i}}\left(\boldsymbol{\Gamma}-P\right).
\]
Note that $\boldsymbol{\Gamma}^{P,\lambda_{i}}$ satisfies $\lambda_{i}\boldsymbol{\kappa}-$graph
condition. As usual, we will parametrize its tubular neighborhood
near $O$ by a map $\boldsymbol{\Phi}^{P,\lambda_{i}}$ (defined by
Definition \ref{tubular nbd} and Remark \ref{scale-preserving}). 

On the other hand, it is very useful to consider the following time-dependent
parabolic rescaling of $\left\{ \boldsymbol{\Sigma}_{t}\right\} $
(called ``normalized MCF'') , $\boldsymbol{U}$ and $\boldsymbol{\Gamma}$:
\begin{equation}
\Pi_{s}\coloneqq\left.\frac{1}{\sqrt{T-t}}\left(\boldsymbol{\Sigma}_{t}-P\right)\right|_{t=T-e^{-s}}\quad\textrm{for}\quad-\ln T\leq s<\infty,\label{NMCF}
\end{equation}
\[
\boldsymbol{U}_{s}\coloneqq\left.\frac{1}{\sqrt{T-t}}\left(\boldsymbol{U}-P\right)\right|_{t=T-e^{-s}},\quad\boldsymbol{\Gamma}_{s}\coloneqq\partial\boldsymbol{U}_{s}=\left.\frac{1}{\sqrt{T-t}}\left(\boldsymbol{\Gamma}-P\right)\right|_{t=T-e^{-s}}.
\]
Note that $\Pi_{s}$ has free boundary on $\boldsymbol{\Gamma}_{s}$
and that $\boldsymbol{\Gamma}_{s}$ satisfies $e^{-\frac{s}{2}}\boldsymbol{\kappa}-$graph
condition. Likewise, the tubular neighborhood of $\boldsymbol{\Gamma}_{s}$
near $O$ is parametrized by a map $\boldsymbol{\Phi}_{s}$ (defined
by Definition \ref{tubular nbd} and Remark \ref{scale-preserving}). 

As a remark, let $s_{i}=-\ln\left(T-t_{i}\right)$, then one can see
that 
\[
\Pi_{s_{i}}=\frac{1}{\sqrt{T-t_{i}}}\left(\boldsymbol{\Sigma}_{t_{i}}-P\right)=\boldsymbol{\Sigma}_{-1}^{\left(P,T\right),\lambda_{i}},
\]
\[
\boldsymbol{U}_{s_{i}}=\frac{1}{\sqrt{T-t_{i}}}\left(\boldsymbol{U}-P\right)=\boldsymbol{U}^{P,\lambda_{i}},\quad\boldsymbol{\Gamma}_{s_{i}}=\frac{1}{\sqrt{T-t_{i}}}\left(\boldsymbol{\Gamma}-P\right)=\boldsymbol{\Gamma}^{P,\lambda_{i}}.
\]
More generally, for each $\sigma\in\left[-\ln T-s_{i},\infty\right)$,
we have
\begin{equation}
\Pi_{s_{i}+\sigma}=\left.\frac{1}{\sqrt{-\tau}}\boldsymbol{\Sigma}_{\tau}^{\left(P,\boldsymbol{T}\right),\lambda_{i}}\right|_{\tau=-e^{-\sigma}},\label{drifted NMCF}
\end{equation}
\[
\boldsymbol{U}_{s_{i}+\sigma}=\left.\frac{1}{\sqrt{-\tau}}\boldsymbol{U}^{P,\lambda_{i}}\right|_{\tau=-e^{-\sigma}},\quad\boldsymbol{\Gamma}_{s_{i}+\sigma}=\partial\boldsymbol{U}_{s_{i}+\sigma}=\left.\frac{1}{\sqrt{-\tau}}\boldsymbol{\Gamma}^{P,\lambda_{i}}\right|_{\tau=-e^{-\sigma}}.
\]
The following lemma is a paraphrasing of Proposition \ref{condensation compactness for MCF}
in terms of $\left\{ \Pi_{s_{i}+\sigma}\right\} $. 
\begin{lem}
\label{condensation compactness for NMCF}Given $P\in\mathcal{S}\cap\boldsymbol{\Gamma}$
and a sequence $\left\{ s_{i}\nearrow\infty\right\} _{i\in\mathbb{N}}$,
there exist a half plane $\Pi$, a finite set $\mathfrak{S}_{P}\subset\Pi$,
and an integer $m\in\mathbb{N}$ so that, after passing to a subsequence,
$\left\{ \Pi_{s_{i}+\sigma}\right\} _{-\ln T-s_{i}\leq\sigma<\infty}$
converges to $\left\{ \Pi\right\} _{-\infty<\sigma<\infty}$ with
multiplicity $m$ away from $\underset{-\infty<\sigma<\infty}{\cup}\,e^{\frac{\sigma}{2}}\mathfrak{S}_{P}\times\left\{ \sigma\right\} $. 

The half plane $\Pi$ meets $\lim_{i\rightarrow\infty}\boldsymbol{\Gamma}_{s_{i+\sigma}}\simeq\mathbb{R}^{2}$
orthogonally. The number of $\mathfrak{S}_{P}$ is bounded by
\[
\mathcal{H}^{0}\left(\mathfrak{S}_{P}\right)\leq C\left(\boldsymbol{\Lambda},\boldsymbol{\kappa},\boldsymbol{l},\mathcal{H}^{2}\left(\boldsymbol{\Sigma}_{0}\right),\chi\left(\boldsymbol{\Sigma}_{0}\right)\right).
\]
Moreover, Huisken's densitiy of $\left\{ \boldsymbol{\Sigma}_{t}\right\} $
at $\left(P,T\right)$ (see Lemma \ref{Huisken's density}) is given
by
\[
\Theta_{\left\{ \boldsymbol{\Sigma}_{t}\right\} }\left(P,T\right)=\frac{m}{2}.
\]
\end{lem}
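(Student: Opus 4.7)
The plan is to recognize the lemma as a reparameterization of Proposition \ref{condensation compactness for MCF} together with a Gaussian density computation. First I would extract the subsequential limit by identifying $\{\Pi_{s_i+\sigma}\}$ with rescaled time-slices of $\{\boldsymbol{\Sigma}_\tau^{(P,T),\lambda_i}\}$ via (\ref{drifted NMCF}), and then evaluate $\Theta_{\{\boldsymbol{\Sigma}_t\}}(P,T)$ by passing Buckland's monotonicity integrand to the limit on the emerging half-plane.

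Setting $\lambda_i = e^{-s_i/2}$, the identification (\ref{drifted NMCF}) reads $\Pi_{s_i+\sigma} = (-\tau)^{-1/2}\, \boldsymbol{\Sigma}_\tau^{(P,T),\lambda_i}$ at $\tau = -e^{-\sigma}$. Applying Proposition \ref{condensation compactness for MCF} to $\{\lambda_i\}$ and passing to a subsequence yields a half-plane $\Pi$ (through the origin and orthogonal to $\lim_i \boldsymbol{\Gamma}^{P,\lambda_i} = \partial \mathbb{R}_+^3$), a finite set $\mathfrak{S}_P \subset \Pi$ with the claimed cardinality bound, and $m \in \mathbb{N}$ such that $\boldsymbol{\Sigma}_\tau^{(P,T),\lambda_i} \to \Pi$ with multiplicity $m$ away from $\mathfrak{S}_P \times (-\infty, 0)$. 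Rescaling spatially by $(-\tau)^{-1/2} = e^{\sigma/2}$ leaves $\Pi$ invariant (as it passes through the origin, forced by the self-shrinker equation $H_\Pi + \tfrac{1}{2}X\cdot N_\Pi = 0$) while sending the singular set at ``time'' $\sigma$ to $e^{\sigma/2}\mathfrak{S}_P$, giving the convergence statement claimed in the lemma.

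For the density, write
\[
\mathcal{I}(t) = \int_{\boldsymbol{\Sigma}_t} e^{85(\boldsymbol{\kappa}^2(T-t))^{2/5}}\, \eta_{\boldsymbol{\Gamma};P,T}\, \Psi_{\boldsymbol{\Gamma};P,T}\, d\mathcal{H}^2
\]
and change variables $X = P + \sqrt{T-t}\, Y$, so that $Y$ ranges over $\Pi_s$ with $s = -\ln(T-t)$. Using the $\boldsymbol{\kappa}$-graph condition together with (\ref{approximate identity}) and (\ref{reflection preserving}), $|\tilde X - P|^2 = (T-t)|Y|^2 + O\bigl(\boldsymbol{\kappa}(T-t)^{3/2}|Y|^3\bigr)$, and since $e^{85(\boldsymbol{\kappa}^2 e^{-s})^{2/5}}$, $\eta_{\boldsymbol{\Gamma};P,T}$, and $(1 + 16(\boldsymbol{\kappa}^2 e^{-s})^{2/5})^{-1}$ all tend to $1$ locally uniformly in $Y$, one obtains
\[
\mathcal{I}(T - e^{-s_i}) = \int_{\Pi_{s_i}} \tfrac{1}{4\pi}\, e^{-|Y|^2/4}\, (1 + o(1))\, d\mathcal{H}^2_Y.
\]
Then multiplicity-$m$ convergence $\Pi_{s_i} \to \Pi$ away from $\mathfrak{S}_P$ yields $\mathcal{I}(T - e^{-s_i}) \to m \int_\Pi \tfrac{1}{4\pi}\, e^{-|Y|^2/4}\, d\mathcal{H}^2 = m/2$, the last equality being the elementary Gaussian integral over a half-plane through the origin orthogonal to $\mathbb{R}^2$. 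Since $\lim_{t \nearrow T}\mathcal{I}(t)$ exists by the monotonicity in Lemma \ref{monotonicity formula for MCF}, this common value is $\Theta_{\{\boldsymbol{\Sigma}_t\}}(P,T)$.

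The main obstacle is justifying the passage to the limit in $\mathcal{I}$: on any compact subset of $\Pi$ disjoint from $\mathfrak{S}_P$ the convergence of $\Pi_{s_i}$ is smooth with multiplicity $m$, so the integrand converges pointwise and is uniformly bounded; near each $Q \in \mathfrak{S}_P$ I would use the uniform area ratio bound (\ref{rescaled area ratio bound}) to estimate $\mathcal{H}^2\bigl(\Pi_{s_i} \cap B_\delta(Q)\bigr) \lesssim \delta^2$, so that the contribution of $B_\delta(Q)$ to $\mathcal{I}$ is $O(\delta^2)$ uniformly in $i$; and the tail at $|Y| \gg 1$ is absorbed by the Gaussian decay together with the same area ratio bound. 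Sending $i \to \infty$ first and then $\delta \to 0$ produces the claimed density $m/2$.
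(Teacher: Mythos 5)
Your proposal follows essentially the same route as the paper: apply Proposition \ref{condensation compactness for MCF} to $\lambda_i=e^{-s_i/2}$, use the scale-invariance of the half-plane through the origin to transfer the convergence to $\Pi_{s_i+\sigma}$ with singular set $e^{\sigma/2}\mathfrak{S}_P$, and compute $\Theta_{\{\boldsymbol{\Sigma}_t\}}(P,T)=m/2$ by rescaling Buckland's density integral and passing to the limit, controlling the contribution near $\mathfrak{S}_P$ and the tail by the area ratio bounds. The paper is terser on the last limit passage (citing (\ref{rescaled area ratio bound}) and Lemma \ref{monotonicity of area ratio}), but your justification fills in exactly those details and is correct.
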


\begin{proof}
By Proposition \ref{condensation compactness for MCF}, there exist
a half plane $\Pi$, a finite set $\mathfrak{S}_{P}\subset\Pi$, and
an integer $m\in\mathbb{N}$ so that $\left\{ \boldsymbol{\Sigma}_{\tau}^{\left(P,T\right),\lambda_{i}}\right\} \rightarrow\left\{ \Pi\right\} $
with multiplicity $m$ away from the set $\mathfrak{S}_{P}\times\left(-\infty,0\right)$.
It follows that 
\[
\Pi_{s_{i}+\sigma}=\left.\frac{1}{\sqrt{-\tau}}\boldsymbol{\Sigma}_{\tau}^{\left(P,T\right),\lambda_{i}}\right|_{\tau=-e^{-\sigma}}\,\rightarrow\,\frac{1}{\sqrt{-\tau}}\Pi=\Pi
\]
away from $\underset{-\infty<\sigma<\infty}{\cup}\,\frac{1}{\sqrt{-\tau}}\mathfrak{S}_{P}\times\left\{ \sigma\right\} $,
where $\tau=-e^{-\sigma}$. Notice that 
\[
\left\Vert H_{\Pi_{s_{i}+\sigma}}\right\Vert _{L^{\infty}}\leq\boldsymbol{\Lambda}e^{-\frac{1}{2}\left(s_{i}+\sigma\right)}\rightarrow0\qquad\textrm{as}\quad i\rightarrow\infty
\]
and $\boldsymbol{\Gamma}_{s_{i}+\sigma}$ satisfies a $\boldsymbol{\kappa}e^{-\frac{1}{2}\left(s_{i}+\sigma\right)}-$graph
condition.

On the other hand, let $t_{i}=T-e^{-s_{i}}$, then Lemma \ref{Huisken's density}
yields 
\[
\Theta_{\left\{ \boldsymbol{\Sigma}_{t}\right\} }\left(P,T\right)=\lim_{i\rightarrow\infty}\int_{\boldsymbol{\Sigma}_{t_{i}}}e^{85\left(\boldsymbol{\kappa}^{2}\left(T-t_{i}\right)\right)^{\frac{2}{5}}}\eta_{\boldsymbol{\Gamma};P,T}\,\Psi_{\boldsymbol{\Gamma};P,T}\left(X,t_{i}\right)\,d\mathcal{H}^{2}\left(X\right)
\]
\[
=\lim_{i\rightarrow\infty}\int_{\Pi_{s_{i}}}e^{85\left(\boldsymbol{\kappa}^{2}e^{-s_{i}}\right)^{\frac{2}{5}}}\eta_{\boldsymbol{\Gamma}_{s_{i}}}\,\Psi_{\boldsymbol{\Gamma}_{s_{i}}}\left(Y\right)\,d\mathcal{H}^{2}\left(Y\right),
\]
where 
\[
\eta_{\boldsymbol{\Gamma}_{s_{i}}}\left(Y\right)=\left(1-\frac{\left|Y\right|^{2}+\left|\tilde{Y}\right|^{2}-80}{\left(\frac{1}{2}\left(\boldsymbol{\kappa}^{2}e^{-s_{i}}\right)^{\frac{2}{5}}\left(\boldsymbol{\kappa}e^{-\frac{s_{i}}{2}}\right)^{-1}\right)^{2}}\right)_{+}^{4},
\]
\[
\Psi_{\boldsymbol{\Gamma}_{s_{i}}}\left(Y\right)=\frac{1}{4\pi}\exp\left(-\frac{\frac{1}{2}\left(\left|Y\right|^{2}+\left|\tilde{Y}\right|^{2}\right)}{4\left(1+16\left(\boldsymbol{\kappa}^{2}e^{-s_{i}}\right)^{\frac{2}{5}}\right)}\right).
\]
Due to $\boldsymbol{\Gamma}_{s_{i}}\rightarrow\partial\mathbb{R}^{3}\simeq\mathbb{R}^{2}$,
$\Pi_{s_{i}}\rightarrow\Pi$ with multiplicity $m$ away from $\mathfrak{S}_{P}$,
(\ref{rescaled area ratio bound}) and Lemma \ref{monotonicity of area ratio}
(with $\Sigma$ replaced by $\Pi_{s_{i}}$), we get $\Theta_{\left\{ \boldsymbol{\Sigma}_{t}\right\} }\left(P,T\right)=\frac{m}{2}$.
\end{proof}
Next, we would like to choose carefully sequences in Lemma \ref{condensation compactness for NMCF}
in the hope that it could help to prove the unity of Huisken's density.
For that purpose, let's make the following definition. Given $\delta>0$,
for each $s\in\left[-\ln T,\infty\right)$ we define
\[
\boldsymbol{U}_{s}^{\delta-\textrm{curved}}=\left\{ Q\in\boldsymbol{U}_{s}\cap B_{\delta^{-1}}\left(O\right)\left|\,\,\exists\,Q'\in\Pi_{s}\cap B_{\delta}\left(Q\right)\;\,\textrm{s.t.}\;\,\delta\left|A_{\Pi_{s}}\left(Q'\right)\right|\geq1\right.\right\} .
\]
Let $\boldsymbol{U}_{s}^{\delta-\textrm{outer}}$ be the set consisting
of all points $Q\in\left(\boldsymbol{U}_{s}\cap B_{\delta^{-1}}\left(O\right)\right)\setminus\left(\Pi_{s}\cup\boldsymbol{U}_{s}^{\delta-\textrm{curved}}\right)$
for which there is a continuous path $\gamma:\left[0,1\right)\rightarrow\boldsymbol{U}_{s}\setminus\left(\Pi_{s}\cup\boldsymbol{U}_{s}^{\delta-\textrm{curved}}\right)$
so that $\gamma\left(0\right)=Q$ and $\textrm{dist}\left(\gamma\left(\xi\right),\Pi_{s}\right)\rightarrow\infty$
as $\xi\nearrow1$. Then we define 
\begin{equation}
\boldsymbol{U}_{s}^{\delta}=\left(\boldsymbol{U}_{s}\cap B_{\delta^{-1}}\left(O\right)\right)\setminus\left(\boldsymbol{U}_{s}^{\delta-\textrm{curved}}\cup\boldsymbol{U}_{s}^{\delta-\textrm{outer}}\right).\label{enclosed region}
\end{equation}
Loosely speaking, $\boldsymbol{U}_{s}^{\delta}$ is the region in
$B_{\delta^{-1}}\left(O\right)$ which is enclosed by $\Pi_{s}$ (especially
when it has multiple sheets) and away from points of large curvature
of $\Pi_{s}$ (cf. \cite{LW}). 

The next lemma will be used to choose the special sequences in Proposition
\ref{delta pinching sequence}.
\begin{lem}
\label{pinching estimate}Given $\delta>0$ and $i\in\mathbb{N}$,
there exists $s_{i}\geq i$ so that 
\[
\sup_{0<\sigma\leq i}\mathcal{H}^{3}\left(\boldsymbol{U}_{s_{i}+\sigma}^{\delta}\right)\leq\left(1+\frac{1}{i}\right)\,\mathcal{H}^{3}\left(\boldsymbol{U}_{s_{i}}^{\delta}\right).
\]
\end{lem}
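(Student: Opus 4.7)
The plan is to establish the lemma by a standard pigeonhole / iteration argument, relying only on a crude a priori upper bound for the volume. Set $V(s) := \mathcal{H}^{3}(\boldsymbol{U}_{s}^{\delta})$. From the definition (\ref{enclosed region}) we have the inclusion $\boldsymbol{U}_{s}^{\delta} \subset B_{\delta^{-1}}(O)$, hence
\[
V(s) \,\leq\, \tfrac{4\pi}{3\delta^{3}} \,=:\, M
\]
for every admissible $s$. No MCF-specific machinery is needed beyond this trivial inclusion; the entire argument is really an abstract statement about nonnegative bounded functions.

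I would next argue by contradiction. Suppose no $s_{i} \geq i$ with the required property exists, so for every $s \geq i$,
\[
\sup_{0 < \sigma \leq i} V(s + \sigma) \,>\, \bigl(1 + \tfrac{1}{i}\bigr)\, V(s).
\]
If $V \equiv 0$ on $[i, \infty)$ this strict inequality fails, so there is some $s_{0} \geq i$ with $V(s_{0}) > 0$. Applying the hypothesis at $s_{0}$ and using the defining property of the supremum, I pick $\sigma_{0} \in (0, i]$ with $V(s_{0} + \sigma_{0}) > (1 + 1/i) V(s_{0})$. Setting $s_{1} := s_{0} + \sigma_{0} \geq i$ and reapplying the hypothesis at $s_{1}$, then at $s_{2}$, and so on, I obtain an increasing sequence $s_{0} < s_{1} < s_{2} < \cdots$ in $[i, \infty)$ satisfying
\[
V(s_{k}) \,>\, \bigl(1 + \tfrac{1}{i}\bigr)^{k} V(s_{0}) \,\to\, \infty,
\]
contradicting $V(s_{k}) \leq M$.

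The only mildly delicate points are to ensure the iteration is well-defined (which follows from the strict inequality in the contradiction hypothesis together with $V(s_{0}) > 0$) and that the constraint $s_{k} \geq i$ is preserved (which is immediate from the monotonicity of the $s_{k}$). The degenerate case $V \equiv 0$ on $[i, \infty)$ is disposed of by taking $s_{i} = i$, where both sides of the desired inequality are $0$. I do not expect any genuine obstacle beyond bookkeeping; the heart of the lemma is simply that a nonnegative function bounded above by $M$ cannot grow by a factor $(1 + 1/i)$ over every window of length $i$.
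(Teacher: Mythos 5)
Your proof is correct and is essentially the same pigeonhole-by-contradiction argument as the paper's: negate the conclusion, iterate the strict growth inequality to produce a sequence $s_{k}$ with $\mathcal{H}^{3}(\boldsymbol{U}_{s_{k}}^{\delta})>(1+\tfrac{1}{i})^{k}\mathcal{H}^{3}(\boldsymbol{U}_{s_{0}}^{\delta})\to\infty$, and contradict the trivial bound coming from $\boldsymbol{U}_{s}^{\delta}\subset B_{\delta^{-1}}(O)$. The only cosmetic difference is that you make the upper bound $M=\tfrac{4\pi}{3}\delta^{-3}$ and the non-vanishing of $V(s_{0})$ explicit, whereas the paper leaves the bound implicit.
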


\begin{proof}
Suppose the contrary, then for every $s\geq i$, there exists $0<\sigma_{s}\leq i$
so that 
\[
\mathcal{H}^{3}\left(\boldsymbol{U}_{s+\sigma_{s}}^{\delta}\right)>\left(1+\frac{1}{i}\right)\,\mathcal{H}^{3}\left(\boldsymbol{U}_{s}^{\delta}\right).
\]
In particular, we have 
\[
\mathcal{H}^{3}\left(\boldsymbol{U}_{i+\sigma_{i}}^{\delta}\right)>\left(1+\frac{1}{i}\right)\,\mathcal{H}^{3}\left(\boldsymbol{U}_{i}^{\delta}\right)\geq0.
\]
Define a sequence $\left\{ s_{k}\right\} _{k\in\mathbb{Z}_{+}}$ recursively
by setting $s_{k+1}=s_{k}+\sigma_{s_{k}}$ for $k\geq0$ and $s_{0}=i+\sigma_{i}$.
Then we have
\[
\mathcal{H}^{3}\left(\boldsymbol{U}_{s_{k}}^{\delta}\right)>\left(1+\frac{1}{i}\right)^{k}\,\mathcal{H}^{3}\left(\boldsymbol{U}_{s_{0}}^{\delta}\right)\qquad\forall\,\,k\in\mathbb{N},
\]
which implies $\limsup_{k\rightarrow\infty}\,\mathcal{H}^{3}\left(\boldsymbol{U}_{s_{k}}^{\delta}\right)=\infty$
(a contradiction). 
\end{proof}
In the following proposition, we use Lemma \ref{pinching estimate}
to choose a special sequence of (\ref{drifted NMCF}). By Lemma \ref{condensation compactness for NMCF}
and Definition \ref{tubular nbd}, we can parametrize each flow as
a multigraph over a half-plane. In that case, if the multiplicity
is not one, the volume of (\ref{enclosed region}) can be roughly
interpreted as the integral of the difference of the upper and lower
graphs, over which we have some uniform control within any finite
period of time (see (\ref{delta pinching of graph})). Moreover, we
also derive the equation satisfied by the difference functions.
\begin{prop}
\label{delta pinching sequence}Given $P\in\mathcal{S}\cap\boldsymbol{\Gamma}$
and $\delta>0$. Suppose that $\Theta_{\left\{ \boldsymbol{\Sigma}_{t}\right\} }\left(P,T\right)>\frac{1}{2}$,
then there exists a sequence $\left\{ s_{i}\nearrow\infty\right\} _{i\in\mathbb{N}}$,
a half plane $\Pi$, a finite set $\mathfrak{S}_{P}\subset\Pi$ and
an integer $m>1$ with the following property. If we assume (without
loss of generality) that 
\[
\lim_{i\rightarrow\infty}\boldsymbol{U}_{s_{i}}=\left\{ \left.\left(x_{1},x_{2},x_{3}\right)\right|\,x_{1},x_{3}\in\mathbb{R},\,x_{2}>0\right\} ,
\]
\[
\lim_{i\rightarrow\infty}\boldsymbol{\Gamma}_{s_{i}}=\left\{ \left.\left(x_{1},0,x_{3}\right)\right|\,x_{1},x_{3}\in\mathbb{R}\right\} ,
\]
\[
\Pi=\left\{ \left.\left(x_{1},x_{2},0\right)\right|\,x_{1}\in\mathbb{R},\,x_{2}\geq0\right\} ,
\]
then the rescaled flow $\left\{ \Pi_{s_{i}+\sigma}\right\} _{-\ln T-s_{i}\leq\sigma<\infty}$
defined in (\ref{drifted NMCF}) can be parametrized as a multigraph
\[
Y_{s_{i}+\sigma}\left(y_{1},y_{2}\right)=\boldsymbol{\Phi}_{s_{i}+\sigma}\left(y_{1},y_{2},v_{s_{i}}^{j}\left(y_{1},y_{2},\sigma\right)\right),\quad j=1,\cdots,m
\]
where $\boldsymbol{\Phi}_{s_{i}+\sigma}$ is the map in Definition
\ref{tubular nbd} which parametrizes the tubular neighborhood of
$\boldsymbol{\Gamma}_{s_{i}+\sigma}$ near $O$, and the functions
satisfy 
\[
v_{s_{i}}^{1}\left(y,\sigma\right)<\cdots<v_{s_{i}}^{m}\left(y,\sigma\right),
\]
\[
v_{s_{i}}^{j}\left(y,\sigma\right)\rightarrow0\qquad\textrm{away from}\;\,\underset{-\infty<\sigma<\infty}{\cup}\,e^{\frac{\sigma}{2}}\mathfrak{S}_{P}\times\left\{ \sigma\right\} \;\,\textrm{for}\;\,j\in\left\{ 1,\cdots,m\right\} .
\]
In addition, let $v_{s_{i}}=v_{s_{i}}^{m}-v_{s_{i}}^{1}$, then it
satisfies
\[
\partial_{\sigma}v_{s_{i}}=\partial_{k}\left(g_{s_{i}+\sigma}^{kl}\left(y,v_{s_{i}}^{m},\nabla v_{s_{i}}^{m}\right)\,\partial_{l}v_{s_{i}}\right)+\left(-\frac{1}{2}y+b_{s_{i}}\left(y,\sigma\right)\right)\cdot\nabla v_{s_{i}}+\left(\frac{1}{2}+c_{s_{i}}\left(y,\sigma\right)\right)v_{s_{i}},
\]
\begin{equation}
\left.\partial_{2}v_{s_{i}}\right|_{y_{2}=0}=0,\label{linear eq of type I graph}
\end{equation}
in which the coefficients satisfy 
\begin{equation}
g_{s_{i}+\sigma}^{kl}\left(y,v_{s_{i}}^{m},\nabla v_{s_{i}}^{m}\right)\rightarrow\boldsymbol{\delta}^{kl}\quad\textrm{and}\quad\left.g_{s_{i}+\sigma}^{12}\left(y,v_{s_{i}}^{m},\nabla v_{s_{i}}^{m}\right)\right|_{y_{2}=0}=0,\label{coefficients bound for type I eq}
\end{equation}
\[
\left|b_{s_{i}}\right|\,+\,\left|c_{s_{i}}\right|\rightarrow0.
\]
Furthermore, given $0<\varepsilon<1<\mathcal{T}<\infty$, for $i\gg1$
there holds
\begin{equation}
\sup_{0<\sigma\leq\mathcal{T}}\,\int_{\Pi\cap B_{\left(1-\varepsilon\right)\delta^{-1}}\left(O\right)\setminus\,\cup_{Q\in e^{\frac{\sigma}{2}}\mathfrak{S}_{P}}B_{\left(1+\varepsilon\right)\delta}\left(Q\right)}\,v_{s_{i}}\left(y,\sigma\right)\,dy\label{delta pinching of graph}
\end{equation}
\[
\leq\left(1+\varepsilon\right)\int_{\Pi\cap B_{\left(1+\varepsilon\right)\delta^{-1}}\left(O\right)\setminus\,\cup_{Q\in e^{\frac{\sigma}{2}}\mathfrak{S}_{P}}B_{\left(1-\varepsilon\right)\delta}\left(Q\right)}\,v_{s_{i}}\left(y,0\right)\,dy.
\]
 
\end{prop}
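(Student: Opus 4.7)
The plan is to combine the condensation compactness statement (Lemma \ref{condensation compactness for NMCF}) with the volume-pinching mechanism (Lemma \ref{pinching estimate}), and then convert the resulting bound on $\mathcal{H}^{3}(\boldsymbol{U}_{s}^{\delta})$ into a bound on the $L^{1}$ norm of the graph difference $v_{s_{i}} = v_{s_{i}}^{m} - v_{s_{i}}^{1}$. First, since $\Theta_{\{\boldsymbol{\Sigma}_{t}\}}(P,T) > \tfrac{1}{2}$, Lemma \ref{condensation compactness for NMCF} forces any subsequential limit of $\{\Pi_{s+\sigma}\}$ to have multiplicity $m = 2\,\Theta > 1$. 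For each $i\in\mathbb{N}$ apply Lemma \ref{pinching estimate} to produce $s_{i}\geq i$ with
\[
\sup_{0<\sigma\leq i}\mathcal{H}^{3}\bigl(\boldsymbol{U}_{s_{i}+\sigma}^{\delta}\bigr) \leq \Bigl(1+\tfrac{1}{i}\Bigr)\,\mathcal{H}^{3}\bigl(\boldsymbol{U}_{s_{i}}^{\delta}\bigr).
\]
Passing to a further subsequence by Lemma \ref{condensation compactness for NMCF}, I may assume $\Pi_{s_{i}+\sigma}$ converges to some half-plane $\Pi$ with multiplicity $m>1$ away from the orbit of a finite set $\mathfrak{S}_{P}\subset\Pi$, and then rotate coordinates so that $\Pi$ takes the stated normal form.

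Next I would use Proposition \ref{local graph thm} in the rescaled picture: on every compact set in $\Pi\setminus\mathfrak{S}_{P}$ and every compact time interval, $\Pi_{s_{i}+\sigma}$ is, for large $i$, a union of $m$ disjoint free-boundary $C^{2,1}$ graphs over $\Pi$ using the tubular map $\boldsymbol{\Phi}_{s_{i}+\sigma}$. The graphs can be ordered $v_{s_{i}}^{1}<\cdots<v_{s_{i}}^{m}$, and the convergence statement in Lemma \ref{condensation compactness for NMCF} yields $v_{s_{i}}^{j}\to 0$ locally in $C^{2}$ away from $\bigcup_{\sigma}e^{\sigma/2}\mathfrak{S}_{P}\times\{\sigma\}$.

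To derive the equation for $v_{s_{i}}$, start from the MCF graph equation (\ref{eq of graph}) for each $v_{s_{i}}^{j}$ written with respect to the parametrization $\boldsymbol{\Phi}_{s_{i}+\sigma}$; the normalized time variable $\sigma$ corresponds to a backward self-similar rescaling, which (after the chain rule $t=T-e^{-s}$, $Y=(X-P)e^{s/2}$) contributes the familiar $-\tfrac{1}{2}y\cdot\nabla v + \tfrac{1}{2}v$ drift-reaction terms in the normal direction. Subtracting the equations for $v_{s_{i}}^{m}$ and $v_{s_{i}}^{1}$ and applying the mean value theorem to linearize the quasilinear principal part around $v_{s_{i}}^{m}$ yields equation (\ref{linear eq of type I graph}) in divergence form with $g_{s_{i}+\sigma}^{kl}(y,v_{s_{i}}^{m},\nabla v_{s_{i}}^{m})$ as leading coefficients; the lower-order corrections $b_{s_{i}},c_{s_{i}}$ tend to zero because $v_{s_{i}}^{j}, \nabla v_{s_{i}}^{j}\to 0$ and $\boldsymbol{\Gamma}_{s_{i}+\sigma}$ flattens. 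The symmetry relation $\left.g_{s_{i}+\sigma}^{12}\right|_{y_{2}=0}=0$ and the Neumann condition come directly from (\ref{reflection condition}) and (\ref{homogeneous Neumann boundary condition}), since each $v_{s_{i}}^{j}$ realizes the free boundary condition.

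The main technical point, and the one I expect to be the most delicate, is the pinching estimate (\ref{delta pinching of graph}). Unwinding the definitions of $\boldsymbol{U}_{s}^{\delta-\textrm{curved}}$ and $\boldsymbol{U}_{s}^{\delta-\textrm{outer}}$: away from the singular set $e^{\sigma/2}\mathfrak{S}_{P}$ and from $\partial B_{\delta^{-1}}(O)$ the curvature $|A_{\Pi_{s_{i}+\sigma}}|$ stays bounded (Proposition \ref{small energy implies regularity}), so $\boldsymbol{U}_{s_{i}+\sigma}^{\delta-\textrm{curved}}$ concentrates near these sets, while $\boldsymbol{U}_{s_{i}+\sigma}^{\delta-\textrm{outer}}$ is precisely the component of $\boldsymbol{U}_{s}\setminus\Pi_{s}$ escaping past the outermost sheet $v_{s_{i}}^{m}$. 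Therefore the enclosed region $\boldsymbol{U}_{s_{i}+\sigma}^{\delta}$ is, modulo an arbitrarily thin neighborhood of the singular/outer sets, the region trapped between $v_{s_{i}}^{1}$ and $v_{s_{i}}^{m}$, giving the two-sided comparison
\[
\int_{\Pi\cap B_{(1-\varepsilon)\delta^{-1}}\setminus\bigcup_{Q\in e^{\sigma/2}\mathfrak{S}_{P}}B_{(1+\varepsilon)\delta}(Q)} v_{s_{i}}(y,\sigma)\,dy \leq \bigl(1+o(1)\bigr)\mathcal{H}^{3}\bigl(\boldsymbol{U}_{s_{i}+\sigma}^{\delta}\bigr)
\]
at time $\sigma$ and the reverse inequality at time $0$ with the slightly enlarged/shrunken regions. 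Inserting these into the pinching estimate from Lemma \ref{pinching estimate} and letting $i\to\infty$ produces (\ref{delta pinching of graph}). The Jacobian factor from $\boldsymbol{\Phi}_{s_{i}+\sigma}$ contributes only a $1+o(1)$ perturbation because $\boldsymbol{\Gamma}_{s_{i}+\sigma}$ satisfies an $e^{-(s_{i}+\sigma)/2}\boldsymbol{\kappa}$-graph condition (see (\ref{approximate identity})), which is absorbed into the $\varepsilon$-slack.
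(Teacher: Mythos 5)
Your proposal is correct and follows essentially the same route as the paper: select $s_i$ via the volume-pinching lemma, extract a multiplicity-$m>1$ limit via condensation compactness, write the sheets as ordered graphs through the tubular map $\boldsymbol{\Phi}_{s_i+\sigma}$, subtract the two extreme graph equations and linearize to get the divergence-form equation with vanishing lower-order coefficients, and convert $\mathcal{H}^{3}(\boldsymbol{U}_{s_i+\sigma}^{\delta})$ into the $L^{1}$ bound on $v_{s_i}$. The only cosmetic difference is that the paper first derives the graph equation in the unnormalized variable $\tau$ and then performs the type-I rescaling, whereas you fold the two steps together; the content is the same.
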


\begin{proof}
Given $\delta>0$, by Proposition \ref{condensation compactness for MCF},
Lemma \ref{condensation compactness for NMCF} and Lemma \ref{pinching estimate},
there exist a sequence $\left\{ t_{i}\nearrow T\right\} $, a half
plane $\Pi$ and a finite set $\mathfrak{S}_{P}\subset\Pi$, and an
integer $m\in\mathbb{N}$ with the following properties. 
\begin{itemize}
\item Let $\lambda_{i}=\sqrt{T-t_{i}}$, then the sequence of MCF defined
in (\ref{parabolic rescaling}) satisfies $\left\{ \boldsymbol{\Sigma}_{\tau}^{\left(P,T\right),\lambda_{i}}\right\} _{-\frac{T}{\lambda_{i}^{2}}\leq\tau<0}\rightarrow\left\{ \Pi\right\} _{-\infty<\tau<0}$
with finite multiplicity $m$ away from $\mathfrak{S}_{P}\times\left(-\infty,0\right)$.
The half plane $\Pi$ has free boundary on $\lim_{i\rightarrow\infty}\boldsymbol{\Gamma}^{P,\lambda_{i}}\simeq\mathbb{R}^{2}$; 
\item Let $s_{i}=-\ln\left(T-t_{i}\right)$, then the sequence of normalized
MCF defined in (\ref{drifted NMCF}) satisfies $\left\{ \Pi_{s_{i}+\sigma}\right\} _{-\ln T-s_{i}\leq\sigma<\infty}\rightarrow\left\{ \Pi\right\} _{-\infty<\tau<\infty}$
with finite multiplicity $m$ away from $\underset{-\infty<\tau<\infty}{\cup}\,e^{\frac{\sigma}{2}}\mathfrak{S}_{P}\times\left\{ \sigma\right\} $;
\item The set $\boldsymbol{U}_{s_{i}}^{\delta}$ defined in (\ref{enclosed region})
satisfies 
\begin{equation}
\sup_{0<\sigma\leq i}\mathcal{H}^{3}\left(\boldsymbol{U}_{s_{i}+\sigma}^{\delta}\right)\leq\left(1+\frac{1}{i}\right)\,\mathcal{H}^{3}\left(\boldsymbol{U}_{s_{i}}^{\delta}\right).\label{delta pinching condition}
\end{equation}
\end{itemize}
With out loss of generality, we may assume that 
\[
\lim_{i\rightarrow\infty}\boldsymbol{U}^{P,\lambda_{i}}=\left\{ \left.\left(x_{1},x_{2},x_{3}\right)\right|\,x_{1},x_{3}\in\mathbb{R},\,x_{2}>0\right\} ,
\]
\[
\lim_{i\rightarrow\infty}\boldsymbol{\Gamma}^{P,\lambda_{i}}=\left\{ \left.\left(x_{1},0,x_{3}\right)\right|\,x_{1},x_{3}\in\mathbb{R}\right\} ,
\]
\[
\Pi=\left\{ \left.\left(x_{1},x_{2},0\right)\right|\,x_{1}\in\mathbb{R},\,x_{2}\geq0\right\} .
\]
Let $\boldsymbol{\Phi}^{P,\lambda_{i}}$ be the map defined in Definition
\ref{tubular nbd}, which parametrizes the tubular neighborhood of
$\boldsymbol{\Gamma}^{P,\lambda_{i}}$ near $O$ (see also Remark
\ref{scale-preserving}). Since $\boldsymbol{\Gamma}^{P,\lambda_{i}}$
satisfies $e^{-\frac{1}{2}\left(s_{i}+\sigma\right)}\boldsymbol{\kappa}-$graph
condition, $\boldsymbol{\Phi}^{P,\lambda_{i}}$ converges to the identity
map as $i\rightarrow\infty$. By (\ref{approximate identity}) and
the normal-vector-preserving property of $\boldsymbol{\Phi}^{P,\lambda_{i}}$,
for each $-\infty<\tau<0$, $\left(\boldsymbol{\Phi}^{P,\lambda_{i}}\right)^{-1}\left(\boldsymbol{\Sigma}_{\tau}^{\left(P,T\right),\lambda_{i}}\right)$
has free boundary on 
\[
\left(\boldsymbol{\Phi}^{P,\lambda_{i}}\right)^{-1}\left(\boldsymbol{\Gamma}^{P,\lambda_{i}}\right)\subset\left\{ \left.\left(y_{1},0,y_{3}\right)\,\right|\,y_{1},y_{3}\in\mathbb{R}\right\} 
\]
and it converges to 
\[
\left\{ \left.\left(y_{1},y_{2},0\right)\,\right|\,y_{1}\in\mathbb{R},\,y_{2}\geq0\right\} \simeq\Pi
\]
with multiplicity $m$ away from $\mathfrak{S}_{P}$. It follows that
for $i\gg1$ and away from $\mathfrak{S}_{P}$, $\left(\boldsymbol{\Phi}^{P,\lambda_{i}}\right)^{-1}\left(\boldsymbol{\Sigma}_{\tau}^{\left(P,T\right),\lambda_{i}}\right)$
is a disjoint union of graphs of $u_{\lambda_{i}}^{j}\left(y_{1},y_{2},t\right)$
defined on $\Pi$ for $j=1,\cdots,m$. We may assume that $u_{\lambda_{i}}^{1}\left(y,t\right)<\cdots<u_{\lambda_{i}}^{m}\left(y,t\right)$.
Note that $\Theta_{\left\{ \boldsymbol{\Sigma}_{t}\right\} }\left(P,T\right)>\frac{1}{2}$
implies $m>1$ (see Lemma \ref{condensation compactness for NMCF}),
and that $u_{\lambda_{i}}^{j}\left(y,t\right)\rightarrow0$ away from
$\mathfrak{S}_{P}\times\left(-\infty,0\right)$ for each $j\in\left\{ 1,\cdots,m\right\} $.
Thus, we can parametrize $\boldsymbol{\Sigma}_{\tau}^{\left(P,T\right),\lambda_{i}}$
(away from $\mathfrak{S}_{P}$) as 
\[
X_{\tau}^{\left(P,T\right),\lambda_{i}}\left(y_{1},y_{2}\right)=\boldsymbol{\Phi}^{P,\lambda_{i}}\left(y_{1},y_{2},u_{\lambda_{i}}^{j}\left(y_{1},y_{2},t\right)\right),\quad j=1,\cdots,m.
\]

Using a similar argument as in Lemma \ref{parametrization of MCF near the boundary},
$u_{\lambda_{i}}^{j}\left(y,t\right)$ satisfies an analogous equation
as (\ref{eq of graph}). Namely, 
\begin{equation}
\partial_{\tau}u_{\lambda_{i}}^{j}=\boldsymbol{g}_{\lambda_{i}}^{kl}\left(y,u_{\lambda_{i}}^{j},\nabla u_{\lambda_{i}}^{j}\right)\partial_{kl}^{2}u_{\lambda_{i}}^{j}+\boldsymbol{f}_{\lambda_{i}}\left(y,u_{\lambda_{i}}^{j},\nabla u_{\lambda_{i}}^{j}\right),\label{eq of rescaled graph}
\end{equation}
where $\boldsymbol{g}_{\lambda_{i}}^{kl}\left(y,u_{\lambda_{i}}^{j},\nabla u_{\lambda_{i}}^{j}\right)$
and $\boldsymbol{f}_{\lambda_{i}}\left(y,u_{\lambda_{i}}^{j},\nabla u_{\lambda_{i}}^{j}\right)$
are defined in the same way as (\ref{induced metric}) and (\ref{inhomogeneous term})
but with $\boldsymbol{\Phi}^{P,\lambda_{i}}$ in place of $\Phi$.
More precisely, $\boldsymbol{g}_{\lambda_{i}}^{kl}\left(y,u_{\lambda_{i}}^{j},\nabla u_{\lambda_{i}}^{j}\right)$
is the inverse of
\[
\boldsymbol{g}_{kl}^{\lambda_{i}}\left(y,u_{\lambda_{i}}^{j},\nabla u_{\lambda_{i}}^{j}\right)\coloneqq\partial_{k}X_{\tau}^{\left(P,T\right),\lambda_{i}}\cdot\partial_{l}X_{\tau}^{\left(P,T\right),\lambda_{i}}
\]
\[
=\boldsymbol{h}_{kl}^{\lambda_{i}}\left(y,u_{\lambda_{i}}^{j}\right)+\boldsymbol{h}_{k3}^{\lambda_{i}}\left(y,u_{\lambda_{i}}^{j}\right)\partial_{l}u_{\lambda_{i}}^{j}+\boldsymbol{h}_{l3}^{\lambda_{i}}\left(y,u_{\lambda_{i}}^{j}\right)\partial_{k}u_{\lambda_{i}}^{j}+\boldsymbol{h}_{33}^{\lambda_{i}}\left(y,u_{\lambda_{i}}^{j}\right)\partial_{k}u_{\lambda_{i}}^{j}\,\partial_{l}u_{\lambda_{i}}^{j},
\]
where 
\[
\boldsymbol{h}_{kl}^{\lambda_{i}}\left(y_{1},y_{2},y_{3}\right)=\partial_{k}\boldsymbol{\Phi}^{P,\lambda_{i}}\left(y_{1},y_{2},y_{3}\right)\cdot\partial_{l}\boldsymbol{\Phi}^{P,\lambda_{i}}\left(y_{1},y_{2},y_{3}\right).
\]
is the pull-back metric by $\boldsymbol{\Phi}^{P,\lambda_{i}}$ (see
also (\ref{pull-back metric}) and (\ref{induced metric})). Similarly,
we define 
\[
\boldsymbol{f}_{\lambda_{i}}\left(y,u_{\lambda_{i}}^{j},\nabla u_{\lambda_{i}}^{j}\right)=\boldsymbol{g}_{\lambda_{i}}^{kl}\left(y,u_{\lambda_{i}}^{j},\nabla u_{\lambda_{i}}^{j}\right)\left\{ \boldsymbol{\varGamma}_{kl,\lambda_{i}}^{3}\left(y,u_{\lambda_{i}}^{j}\right)+\boldsymbol{Q}_{kl,,\lambda_{i}}\left(y,u_{\lambda_{i}}^{j},\nabla u_{\lambda_{i}}^{j}\right)\right\} ,
\]
where $\boldsymbol{\varGamma}_{kl,\lambda_{i}}^{3}\left(y,u_{\lambda_{i}}^{j}\right)$
and $\boldsymbol{Q}_{kl,,\lambda_{i}}\left(y,u_{\lambda_{i}}^{j},\nabla u_{\lambda_{i}}^{j}\right)$
are defined analogously as (\ref{connection}) and (\ref{quadratic term})
but with $\boldsymbol{\Phi}^{P,\lambda_{i}}$ and $\left(\boldsymbol{\Phi}^{P,\lambda_{i}},u_{\lambda_{i}}^{j},\nabla u_{\lambda_{i}}^{j}\right)$
in place of $\Phi$ and $\left(\Phi,u,\nabla u\right)$, respectively.
As in Lemma \ref{parametrization of MCF near the boundary}, we have
\begin{equation}
\left.\partial_{2}u_{\lambda_{i}}^{j}\right|_{y_{2}=0}=0,\label{Neumann boundary condition for resclaed graph}
\end{equation}
\begin{equation}
\left.\boldsymbol{g}_{\lambda_{i}}^{12}\left(y,u_{\lambda_{i}}^{j},\nabla u_{\lambda_{i}}^{j}\right)\right|_{y_{2}=0}=0.\label{reflection condition for rescaled graph}
\end{equation}
Note that $\boldsymbol{\Gamma}^{P,\lambda_{i}}$ satisfies a $\lambda_{i}\boldsymbol{\kappa}-$graph
condition.

By (\ref{drifted NMCF}) and Remark \ref{scale-preserving}, $\Pi_{s_{i}+\sigma}$
can be parametrized as a multigraph 
\[
Y_{s_{i}+\sigma}\left(y\right)=\boldsymbol{\Phi}_{s_{i}+\sigma}\left(y,v_{s_{i}}^{j}\left(y,\sigma\right)\right),\quad j=1,\cdots,m,
\]
where
\begin{equation}
v_{s_{i}}^{j}\left(y,\sigma\right)=\left.\frac{1}{\sqrt{-\tau}}\,u_{\lambda_{i}}^{j}\left(\sqrt{-\tau}\,y,\tau\right)\right|_{\tau=-e^{-\sigma}}\label{type I rescaling of graph}
\end{equation}
and $\boldsymbol{\Phi}_{s_{i}+\sigma}$ is the map defined in \ref{tubular nbd}
(which parametrizes a tubular neighborhood of $\boldsymbol{\Gamma}_{s_{i}+\sigma}$).
Note that $v_{s_{i}}^{1}\left(y,\sigma\right)<\cdots<v_{s_{i}}^{m}\left(y,\sigma\right)$
and $v_{s_{i}}^{j}\left(y,\sigma\right)\rightarrow0$ away from $\left\{ \left(y,\sigma\right)\left|\,y\in e^{\frac{\sigma}{2}}\mathfrak{S}_{P},\,\sigma\in\mathbb{R}\right.\right\} $.
By (\ref{eq of rescaled graph}), (\ref{Neumann boundary condition for resclaed graph})
and (\ref{type I rescaling of graph}), we have
\begin{equation}
\partial_{\sigma}v_{s_{i}}^{j}+\frac{1}{2}y\cdot\nabla v_{s_{i}}^{j}-\frac{1}{2}v_{s_{i}}^{j}=\boldsymbol{g}_{s_{i}+\sigma}^{kl}\left(y,v_{s_{i}}^{j},\nabla v_{s_{i}}^{j}\right)\partial_{kl}^{2}v_{s_{i}}^{j}+\boldsymbol{f}_{s_{i}+\sigma}\left(y,v_{s_{i}}^{j},\nabla v_{s_{i}}^{j}\right),\label{eq of type I rescaled graph}
\end{equation}
\[
\left.\partial_{2}v_{s_{i}}^{j}\right|_{y_{2}=0}=0,
\]
for $j=1,\cdots,m$, where $\boldsymbol{g}_{s_{i}+\sigma}^{kl}\left(y,v_{s_{i}}^{j},\nabla v_{s_{i}}^{j}\right)$
and $\boldsymbol{f}_{s_{i}+\sigma}\left(y,v_{s_{i}}^{j},\nabla v_{s_{i}}^{j}\right)$
are defined analogously as (\ref{induced metric}) and (\ref{inhomogeneous term})
but with $\boldsymbol{\Phi}_{s_{i}+\sigma}$ in place of $\Phi$.
Note that $\boldsymbol{\Gamma}_{s_{i}+\sigma}$ satisfies our $e^{-\frac{s_{i}+\sigma}{2}}\boldsymbol{\kappa}-$graph
condition.

Next, let $v_{s_{i}}=v_{s_{i}}^{m}-v_{s_{i}}^{1}$. By (\ref{eq of type I rescaled graph}),
we get 
\[
\partial_{\sigma}v_{s_{i}}+\frac{1}{2}y\cdot\nabla v_{s_{i}}-\frac{1}{2}v_{s_{i}}=\partial_{k}\left(\boldsymbol{g}_{s_{i}+\sigma}^{kl}\left(y,v_{s_{i}}^{m},\nabla v_{s_{i}}^{m}\right)\,\partial_{l}v_{s_{i}}\right)+\Xi_{s_{i}+\sigma},
\]
\[
\left.\partial_{2}v_{s_{i}}\right|_{y_{2}=0}=0,
\]
where 
\[
\Xi_{s_{i}+\sigma}=-\partial_{k}\left(\boldsymbol{g}_{s_{i}+\sigma}^{kl}\left(y,v_{s_{i}}^{m},\nabla v_{s_{i}}^{m}\right)\right)\partial_{l}v_{s_{i}}
\]
\[
+\left(\boldsymbol{g}_{s_{i}+\sigma}^{kl}\left(y,v_{s_{i}}^{m},\nabla v_{s_{i}}^{m}\right)-\boldsymbol{g}_{s_{i}+\sigma}^{kl}\left(y,v_{s_{i}}^{1},\nabla v_{s_{i}}^{1}\right)\right)\partial_{kl}^{2}v_{s_{i}}^{1}
\]
\[
+\boldsymbol{f}_{s_{i}+\sigma}\left(y,v_{s_{i}}^{m},\nabla v_{s_{i}}^{m}\right)-\boldsymbol{f}_{s_{i}+\sigma}\left(y,v_{s_{i}}^{1},\nabla v_{s_{i}}^{1}\right).
\]
From (\ref{inhomogeneous term}), we have
\[
\Xi_{s_{i}+\sigma}=b_{s_{i}}\left(y,\sigma\right)\cdot\nabla v_{s_{i}}+c_{s_{i}}\left(y,\sigma\right)v_{s_{i}}.
\]
with the vector-valued function $b_{s_{i}}$ and the scalar function
$c_{s_{i}}$ satisfying 
\[
\left|b_{s_{i}}\right|\,+\,\left|c_{s_{i}}\right|
\]
\[
\leq C\left(\left\Vert \boldsymbol{\varphi}_{s_{i}+\sigma}\right\Vert _{C^{3}},y_{2},\nabla v_{s_{i}}^{m},\nabla v_{s_{i}}^{1}\right)\,\left(\left\Vert v_{s_{i}}^{m}\right\Vert _{C^{2}}+\left\Vert v_{s_{i}}^{1}\right\Vert _{C^{2}}+\left\Vert \boldsymbol{\varGamma}_{kl,s_{i}+\sigma}^{p}\right\Vert _{L^{\infty}}+\left[\boldsymbol{\varGamma}_{kl,s_{i}+\sigma}^{p}\right]_{1,\textrm{spacial}}\right),
\]
where $\boldsymbol{\varphi}_{s_{i}+\sigma}$ is the local graph of
$\boldsymbol{\Gamma}_{s_{i}+\sigma}$ (see Definition \ref{kappa graph condition}),
$\boldsymbol{\varGamma}_{kl,s_{i}+\sigma}^{p}$ is the connection
associated with $\boldsymbol{\Phi}_{s_{i}+\sigma}$(see (\ref{connection})),
and $\left[\cdot\right]_{1,\textrm{spacial}}$ is the Lipschitz norm
with respect to the spacial variable (see Lemma \ref{parametrization of MCF near the boundary}).
Note that (\ref{reflection condition for rescaled graph}) yields
\[
\left.\boldsymbol{g}_{s_{i}+\sigma}^{12}\left(y,v_{s_{i}}^{m},\nabla v_{s_{i}}^{m}\right)\right|_{y_{2}=0}=0.
\]
Since $\boldsymbol{\Gamma}_{s_{i}+\sigma}$ satisfies $e^{-\frac{1}{2}\left(s_{i}+\sigma\right)}\boldsymbol{\kappa}-$graph
condition, we have $\boldsymbol{\Phi}_{s_{i}+\sigma}\left(Y\right)\stackrel{C^{3}}{\rightarrow}Y$
as $i\rightarrow\infty$. It follows that 
\[
\boldsymbol{g}_{s_{i}+\sigma}^{kl}\left(y,v_{s_{i}}^{m},\nabla v_{s_{i}}^{m}\right)\rightarrow\boldsymbol{\delta}^{kl},\quad\left|b_{s_{i}}^{k}\right|+\left|c_{s_{i}}\right|\rightarrow0.
\]
 Moreover, the push-forward measure $\boldsymbol{\Phi}_{s_{i}+\sigma}^{*}\mathcal{H}^{3}$
converge to $\mathcal{H}^{3}$ as $i\rightarrow\infty$. Namely, for
every Borel set $\mathcal{B}$ in $\mathbb{R}^{3}$, there holds
\[
\boldsymbol{\Phi}_{s_{i}+\sigma}^{*}\mathcal{H}^{3}\left(\mathcal{B}\right)=\mathcal{H}^{3}\left(\boldsymbol{\Phi}_{s_{i}+\sigma}^{-1}\left(\mathcal{B}\right)\right)\rightarrow\mathcal{H}^{3}\left(\mathcal{B}\right).
\]

Lastly, given $0<\varepsilon<1<\mathcal{T}<\infty$, by (\ref{enclosed region}),
for $i\gg1$ we have
\[
\sup_{\left|\sigma\right|\leq\mathcal{T}}\,\int_{\Pi\cap B_{\left(1-\varepsilon\right)\delta^{-1}}\left(O\right)\setminus\,\cup_{Q\in e^{\frac{\sigma}{2}}\mathfrak{S}_{P}}B_{\left(1+\varepsilon\right)\delta}\left(Q\right)}\,v_{s_{i}}\left(y,\sigma\right)\,dy\,\leq\left(1+\frac{\varepsilon}{2}\right)\mathcal{H}^{3}\left(\boldsymbol{U}_{s_{i}+\sigma}^{\delta}\right),
\]
\[
\int_{\Pi\cap B_{\left(1+\varepsilon\right)\delta^{-1}}\left(O\right)\setminus\,\cup_{Q\in e^{\frac{\sigma}{2}}\mathfrak{S}_{P}}B_{\left(1-\varepsilon\right)\delta}\left(Q\right)}\,v_{s_{i}}\left(y,0\right)\,dy\,\geq\left(1-\frac{\varepsilon}{2}\right)\mathcal{H}^{3}\left(\boldsymbol{U}_{s_{i}}^{\delta}\right).
\]
Using (\ref{delta pinching condition}), we get
\[
\sup_{0<\sigma\leq\mathcal{T}}\,\int_{\Pi\cap B_{\left(1-\varepsilon\right)\delta^{-1}}\left(O\right)\setminus\,\cup_{Q\in e^{\frac{\sigma}{2}}\mathfrak{S}_{P}}B_{\left(1+\varepsilon\right)\delta}\left(Q\right)}\,v_{s_{i}}\left(y,\sigma\right)\,dy
\]
\[
\leq\frac{1+\frac{\varepsilon}{2}}{1-\frac{\varepsilon}{2}}\left(1+\frac{1}{i}\right)\int_{\Pi\cap B_{\left(1+\varepsilon\right)\delta^{-1}}\left(O\right)\setminus\,\cup_{Q\in e^{\frac{\sigma}{2}}\mathfrak{S}_{P}}B_{\left(1-\varepsilon\right)\delta}\left(Q\right)}\,v_{s_{i}}\left(y,0\right)\,dy
\]
\[
\leq\left(1+\varepsilon\right)\int_{\Pi\cap B_{\left(1+\varepsilon\right)\delta^{-1}}\left(O\right)\setminus\,\cup_{Q\in e^{\frac{\sigma}{2}}\mathfrak{S}_{P}}B_{\left(1-\varepsilon\right)\delta}\left(Q\right)}\,v_{s_{i}}\left(y,0\right)\,dy
\]
for $i\gg1$.
\end{proof}
What follows are the estimates on the upper and lower bound for the
difference function. Since the function is defined on a half-plane,
we first use the method of reflection and then use the equation satisfied
by the extension to derive the estimates. 
\begin{lem}
\label{Harnack estimate}Let $\delta>0$ and $\left\{ v_{s_{i}}\right\} $
be as in Proposition \ref{delta pinching sequence}. Define $\bar{v}_{s_{i}}$
as the even extension of $v_{s_{i}}$, i.e.
\[
\bar{v}_{s_{i}}\left(y,t\right)=\left\{ \begin{array}{c}
v_{s_{i}}\left(y,\sigma\right),\;y_{2}\geq0\\
v_{s_{i}}\left(\tilde{y},\sigma\right),\;y_{2}<0
\end{array},\right.
\]
where $\tilde{y}=\widetilde{\left(y_{1},y_{2}\right)}=\left(y_{1},-y_{2}\right)$.
Then $\bar{v}_{s_{i}}$ is $C^{2}$ away from $\underset{\sigma\in\mathbb{R}}{\cup}\,e^{\frac{\sigma}{2}}\left(\overline{\mathfrak{S}_{P}}\right)\times\left\{ \sigma\right\} $,
where $\overline{\mathfrak{S}_{P}}=\mathfrak{S}_{P}\cup\widetilde{\mathfrak{S}_{P}}$,
and it satisfies 
\begin{equation}
\partial_{\sigma}\bar{v}_{s_{i}}=\partial_{k}\left(\bar{\boldsymbol{g}}_{s_{i}}^{kl}\left(y,\sigma\right)\,\partial_{l}\bar{v}_{s_{i}}\right)+\left(-\frac{1}{2}y+\bar{b}_{s_{i}}\left(y,\sigma\right)\right)\cdot\nabla v_{s_{i}}+\left(\frac{1}{2}+\bar{c}_{s_{i}}\left(y,\sigma\right)\right)v_{s_{i}},\label{linear eq of extended type I graph}
\end{equation}
where
\[
\bar{\boldsymbol{g}}_{s_{i}}^{kl}\left(y,\sigma\right)=\left\{ \begin{array}{c}
\boldsymbol{g}_{s_{i}+\sigma}^{kl}\left(y,v_{s_{i}}^{m},\nabla v_{s_{i}}^{m}\right),\;y_{2}\geq0\\
\left(-1\right)^{k+l}\boldsymbol{g}_{s_{i}+\sigma}^{kl}\left(\tilde{y},v_{s_{i}}^{m}\left(\tilde{y},\sigma\right),\nabla v_{s_{i}}^{m}\left(\tilde{y},\sigma\right)\right),\;y_{2}<0
\end{array}\right.,\quad\textrm{for}\,\;k,l\in\left\{ 1,2\right\} ;
\]
\[
\bar{b}_{s_{i}}^{k}\left(y,\sigma\right)=\left\{ \begin{array}{c}
b_{s_{i}}^{k}\left(y,\sigma\right),\;y_{2}\geq0\\
\left(-1\right)^{k+1}b_{s_{i}}^{k}\left(\tilde{y},\sigma\right)\;y_{2}<0
\end{array}\right.,\quad\textrm{for}\,\;k\in\left\{ 1,2\right\} ;
\]
\[
\bar{c}_{s_{i}}\left(y,\sigma\right)=\left\{ \begin{array}{c}
c_{s_{i}}\left(y,\sigma\right),\;y_{2}\geq0\\
c_{s_{i}}\left(\tilde{y},\sigma\right)\;y_{2}<0
\end{array}\right..
\]
The functions $\bar{\boldsymbol{g}}_{s_{i}}^{kl}\left(y,\sigma\right),\bar{c}_{s_{i}}\left(y,\sigma\right)$
are Lipschitz in $y$ , $\bar{b}_{s_{i}}\in L^{\infty}$ and 
\begin{equation}
\bar{\boldsymbol{g}}_{s_{i}}^{kl}\rightarrow\boldsymbol{\delta}^{kl},\quad\left|\bar{b}_{s_{i}}\right|\,+\,\left|\bar{c}_{s_{i}}\right|\rightarrow0\label{coefficients bound for extended type I eq}
\end{equation}

Additionally, let 
\[
\sigma_{\delta}=\max\left\{ -2\,\ln\left(\frac{\delta}{2}\,\min_{Q\in\mathfrak{S}_{P}\setminus\left\{ O\right\} }\left|Q\right|\right),1\right\} 
\]
and fix $Q_{\delta}\in\Pi\cap B_{\frac{1}{2}\delta^{-1}}\left(O\right)\setminus B_{\frac{3}{2}\delta}\left(O\right)$.
Then for any given $\mathcal{T}>\sigma_{\delta}+1$, if $i\gg1$,
we have
\begin{itemize}
\item For $y\in B_{\frac{2}{3}\delta^{-1}}\left(O\right)\setminus B_{\frac{4}{3}\delta}\left(O\right)$,
$\sigma_{\delta}+1\leq\sigma\leq\mathcal{T}$, there holds 
\[
C\left(\delta,\mathcal{T}\right)^{-1}\bar{v}_{s_{i}}\left(Q_{\delta},\sigma_{\delta}\right)\leq\,\bar{v}_{s_{i}}\left(y,\sigma\right)\,\leq C\left(\delta\right)\bar{v}_{s_{i}}\left(Q_{\delta},\sigma_{\delta}\right);
\]
\item For $y\in B_{\frac{2}{3}\delta^{-1}}\left(O\right)\setminus B_{\frac{4}{3}\delta}\left(O\right)$,
$\sigma_{\delta}+1\leq\sigma\leq\sigma_{\delta}+2$, there holds 
\[
\bar{v}_{s_{i}}\left(y,\sigma\right)\,\geq C\left(\delta\right)^{-1}\bar{v}_{s_{i}}\left(Q_{\delta},\sigma_{\delta}\right).
\]
\end{itemize}
\end{lem}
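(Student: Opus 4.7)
My plan is to handle the two parts of the lemma in turn: first, the extension of $v_{s_i}$ by reflection and the derivation of the equation (\ref{linear eq of extended type I graph}); second, the two Harnack-type bounds on the annular region. The bulk of the work is geometric: ensuring that the region in question stays away from the singular set where the multigraph representation breaks down.

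For the extension, I would rewrite equation (\ref{linear eq of type I graph}) in non-divergence form and apply Lemma \ref{reflection principle}. Its hypotheses are satisfied by the homogeneous Neumann condition $\partial_2 v_{s_i}|_{y_2=0}=0$ stated in (\ref{linear eq of type I graph}) and by $\boldsymbol{g}^{12}_{s_i+\sigma}(y,v_{s_i}^m,\nabla v_{s_i}^m)|_{y_2=0}=0$ in (\ref{coefficients bound for type I eq}). A direct bookkeeping of how each term transforms under the reflection $(y_1,y_2)\mapsto(y_1,-y_2)$ yields (\ref{linear eq of extended type I graph}) with the stated piecewise-defined coefficients: the drift $-\frac{1}{2}y$ maps to itself, the components of $b_{s_i}$ pick up the $(-1)^{k+1}$ signs indicated, and the scalar $c_{s_i}$ is even. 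The regularity and convergence in (\ref{coefficients bound for extended type I eq}) then follow from (\ref{coefficients bound for type I eq}).

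For the Harnack estimates, the key geometric observation is that the annular region $B_{\frac{2}{3}\delta^{-1}}(O)\setminus B_{\frac{4}{3}\delta}(O)$, together with a slight enlargement, stays uniformly away from the moving singular set $\bigcup_{\sigma}\,e^{\sigma/2}\overline{\mathfrak{S}_P}\times\{\sigma\}$ for every $\sigma\geq\sigma_\delta$. Indeed, by the defining inequality for $\sigma_\delta$, for any $Q\in\overline{\mathfrak{S}_P}\setminus\{O\}$ and $\sigma\geq\sigma_\delta$ one has $|e^{\sigma/2}Q|\geq 2\delta^{-1}$, which places such points well outside the annulus; the only remaining possibility, the origin itself (if in $\overline{\mathfrak{S}_P}$), lies in the excluded inner ball $B_{\frac{4}{3}\delta}(O)$. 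Consequently, for all $i\gg1$, $\bar{v}_{s_i}$ is a non-negative classical solution of the uniformly parabolic linear equation (\ref{linear eq of extended type I graph}) on a parabolic cylinder obtained from a small enlargement of the annulus over the time interval $[\sigma_\delta, \mathcal{T}+1]$, with coefficients bounded uniformly in $i$ thanks to (\ref{coefficients bound for extended type I eq}).

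Moser's parabolic Harnack inequality on this cylinder then yields both bullets by chaining through finitely many overlapping parabolic sub-cylinders. For the first bullet, the lower bound with constant $C(\delta,\mathcal{T})$ comes from a forward-in-time chain from $(Q_\delta,\sigma_\delta)$ to an arbitrary $(y,\sigma)$ whose length grows with $\mathcal{T}-\sigma_\delta$, while the matching upper bound arises from a symmetric chain whose local geometry depends only on $\delta$. For the second bullet the time interval $[\sigma_\delta+1,\sigma_\delta+2]$ has length bounded independently of $\mathcal{T}$, so a chain of cylinders whose cardinality depends only on $\delta$ suffices. The main technical subtlety I anticipate is verifying that the reflected coefficients satisfy Moser's structural hypotheses across $\{y_2=0\}$; this is automatic from (\ref{coefficients bound for extended type I eq}), since the reflection preserves the diagonal entries of $\bar{\boldsymbol{g}}^{kl}_{s_i}$ and the condition $\boldsymbol{g}^{12}_{s_i+\sigma}|_{y_2=0}=0$ prevents any jump of the off-diagonal entry across the reflection line, so the extended operator remains uniformly elliptic with bounded measurable coefficients.
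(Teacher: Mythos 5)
Your treatment of the reflection step and of the lower bounds is fine and matches the paper: the extension via Lemma \ref{reflection principle} using $\partial_2 v_{s_i}|_{y_2=0}=0$ and $\boldsymbol{g}^{12}_{s_i+\sigma}|_{y_2=0}=0$, the observation that $e^{\sigma/2}\mathfrak{S}_P\setminus\{O\}$ exits $B_{2\delta^{-1}}(O)$ for $\sigma\geq\sigma_\delta$, and forward-in-time Harnack chains from $(Q_\delta,\sigma_\delta)$ giving the lower bounds with constants $C(\delta,\mathcal{T})^{-1}$ and $C(\delta)^{-1}$ respectively.

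However, there is a genuine gap in your derivation of the \emph{upper} bound $\bar{v}_{s_i}(y,\sigma)\leq C(\delta)\,\bar{v}_{s_i}(Q_\delta,\sigma_\delta)$ for $\sigma\in[\sigma_\delta+1,\mathcal{T}]$. The parabolic Harnack inequality only controls the supremum over an \emph{earlier} cylinder by the infimum over a \emph{later} one; chaining it in either direction between $(Q_\delta,\sigma_\delta)$ and $(y,\sigma)$ with $\sigma>\sigma_\delta$ can only ever produce the lower bound $\bar{v}_{s_i}(y,\sigma)\gtrsim\bar{v}_{s_i}(Q_\delta,\sigma_\delta)$, never the reverse. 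There is no ``symmetric chain'': a bound of a later value by an earlier one is false for general nonnegative solutions of (\ref{linear eq of extended type I graph}) — the zeroth-order term $(\tfrac12+\bar{c}_{s_i})v$ allows solutions growing like $e^{\sigma/2}$, so no constant independent of $\mathcal{T}$ (indeed, no Harnack-type constant at all) can work without extra input. The extra input is exactly the volume-pinching estimate (\ref{delta pinching of graph}) from Proposition \ref{delta pinching sequence}, which is the reason the special sequence $s_i$ was chosen via Lemma \ref{pinching estimate} in the first place; your argument never uses it. The paper's route is: apply Harnack in the legitimate direction to bound $\bar{v}_{s_i}(\cdot,0)$ (time $0$ being \emph{earlier} than $\sigma_\delta$) by $C(\delta)\,\bar{v}_{s_i}(Q_\delta,\sigma_\delta)$, integrate to get an $L^1$ bound at $\sigma=0$, propagate that $L^1$ bound forward to all $\sigma\leq\mathcal{T}$ using (\ref{delta pinching of graph}) with a constant independent of $\mathcal{T}$, and then convert it into a pointwise bound via the local mean value (sub-solution) inequality of Aronson--Serrin. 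Without this step your first bullet's upper estimate is unproved, and the $\mathcal{T}$-independence of $C(\delta)$ — which is what makes the limit $\mathcal{T}\nearrow\infty$ in Step 1 of Proposition \ref{unit Huisken density} work — is lost.
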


\begin{proof}
The first part follows from (\ref{linear eq of type I graph}), (\ref{coefficients bound for type I eq})
and the reflection principle (see Lemma \ref{reflection principle}).
For the second part, note that $\sigma_{\delta}\geq1$ is chosen so
that 
\[
e^{\frac{\sigma_{\delta}}{2}}\min_{Q\in\mathfrak{S}_{P}\setminus\left\{ O\right\} }\left|Q\right|\geq2\delta^{-1}
\]
if $\mathfrak{S}_{P}\setminus\left\{ O\right\} \neq\emptyset$. In
particular, $v_{s_{i}}\left(\cdot,\sigma\right)$ is $C^{2}$ in $B_{2\delta^{-1}}\left(O\right)$
away from $O$ for $\sigma\geq\sigma_{\delta}$ and $i\gg1$. 

Given $\mathcal{T}>\sigma_{\delta}+1$, by (\ref{linear eq of extended type I graph}),
(\ref{coefficients bound for extended type I eq}) and the Harnack
inequality (cf. \cite{AS}), for $i\gg1$ we have 
\begin{itemize}
\item For $y\in B_{\frac{2}{3}\delta^{-1}}\left(O\right)\setminus B_{\frac{4}{3}\delta}\left(O\right),\,\sigma_{\delta}+1\leq\sigma\leq\mathcal{T}$,
\[
\bar{v}_{s_{i}}\left(\cdot,\sigma\right)\geq C\left(\delta,\mathcal{T}\right)^{-1}\bar{v}_{s_{i}}\left(Q_{\delta},\sigma_{\delta}\right);
\]
\item For $y\in B_{\frac{2}{3}\delta^{-1}}\left(O\right)\setminus B_{\frac{4}{3}\delta}\left(O\right),\,\sigma_{\delta}+1\leq\sigma\leq\sigma_{\delta}+2$,
\[
\bar{v}_{s_{i}}\left(\cdot,\sigma\right)\geq C\left(\delta\right)^{-1}\bar{v}_{s_{i}}\left(Q_{\delta},\sigma_{\delta}\right);
\]
\item For $y\in B_{\frac{5}{4}\delta^{-1}}\left(O\right)\setminus\,\cup_{Q\in\mathfrak{S}_{P}}B_{\frac{3}{4}\delta}\left(Q\right)$,
\[
\bar{v}_{s_{i}}\left(\cdot,0\right)\leq C\left(\delta\right)\bar{v}_{s_{i}}\left(Q_{\delta},\sigma_{\delta}\right).
\]
\end{itemize}
In particular, the last one yields
\[
\int_{B_{\frac{5}{4}\delta^{-1}}\left(O\right)\setminus\,\cup_{Q\in\mathfrak{S}_{P}}B_{\frac{3}{4}\delta}\left(Q\right)}\,\bar{v}_{s_{i}}\left(y,0\right)\,dy\,\leq C\left(\delta\right)\bar{v}_{s_{i}}\left(Q_{\delta},\sigma_{\delta}\right).
\]
To derive the upper bound for $\bar{v}_{s_{i}}$, we first use (\ref{delta pinching of graph})
and the above inequality to get 
\[
\sup_{\sigma_{\delta}\leq\sigma\leq\mathcal{T}}\,\int_{B_{\frac{3}{4}\delta^{-1}}\left(O\right)\setminus B_{\frac{5}{4}\delta}\left(O\right)}\,\bar{v}_{s_{i}}\left(y,\sigma\right)\,dy
\]
\[
\lesssim\int_{\Pi\cap B_{\frac{5}{4}\delta^{-1}}\left(O\right)\setminus B_{\frac{3}{4}\delta}\left(O\right)}\,\bar{v}_{s_{i}}\left(y,0\right)\,dy\,\leq C\left(\delta\right)\,\bar{v}_{s_{i}}\left(Q_{\delta},\sigma_{\delta}\right).
\]
By (\ref{linear eq of extended type I graph}), (\ref{coefficients bound for extended type I eq})
and the mean value inequality (cf. \cite{AS}), for $y\in B_{\frac{2}{3}\delta^{-1}}\left(O\right)\setminus B_{\frac{4}{3}\delta}\left(O\right)$,
$\sigma_{\delta}+1\leq\sigma\leq\mathcal{T}$, we have 
\[
\bar{v}_{s_{i}}\left(y_{0},\sigma_{0}\right)\,\lesssim\fint_{B_{\frac{\delta}{12}}\left(y_{0}\right)\times\left(\sigma_{0}-\left(\frac{\delta}{12}\right)^{2},\sigma\right)}\,\bar{v}_{s_{i}}\left(y,\sigma\right)\,dy\,d\sigma
\]
\[
\lesssim\frac{1}{\delta^{2}}\,\sup_{\sigma_{0}-\left(\frac{\delta}{12}\right)^{2}<\sigma<\sigma_{0}}\,\int_{B_{\frac{3}{4}\delta^{-1}}\left(O\right)\setminus B_{\frac{5}{4}\delta}\left(O\right)}\,\bar{v}_{s_{i}}\left(y,\sigma\right)\,dy\,\leq C\left(\delta\right)\bar{v}_{s_{i}}\left(Q_{\delta},\sigma_{\delta}\right).
\]
\end{proof}
Now we are in a position to prove the unity of Huisken's density.
Our proof follows closely the arguments in \cite{LW} (see also \cite{CM}).
\begin{prop}
\label{unit Huisken density}(Unity of Huisken's Density)

Let $P$ be a limit point of $\left\{ \boldsymbol{\Sigma}_{t}\right\} _{0\leq t<T}$
as $t\nearrow T$. Then 
\[
\Theta_{\left\{ \boldsymbol{\Sigma}_{t}\right\} }\left(P,T\right)=\left\{ \begin{array}{c}
1,\quad P\in\boldsymbol{U}\\
\frac{1}{2},\quad P\in\boldsymbol{\Gamma}
\end{array}\right..
\]
\end{prop}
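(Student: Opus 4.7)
The plan is to argue by contradiction. First, if $P$ is a limit point with $P \notin \mathcal{S}$, then by Remark~\ref{isolated singularities} the flow $\{\boldsymbol{\Sigma}_t\}$ is $C^{2,1}$ up to time $T$ in a neighborhood of $P$, so $P \in \boldsymbol{\Sigma}_T$ and Lemma~\ref{Huisken's density} gives the claimed value $1$ or $\tfrac{1}{2}$. Hence it remains to treat $P \in \mathcal{S}$, and I focus on the boundary case $P \in \mathcal{S} \cap \boldsymbol{\Gamma}$, the interior case being entirely analogous (with the half-plane $\Pi$ replaced by a full plane and the even extension step omitted). Suppose for contradiction that $\Theta_{\{\boldsymbol{\Sigma}_t\}}(P,T) > \tfrac{1}{2}$; by Lemma~\ref{condensation compactness for NMCF} the associated multiplicity $m$ satisfies $m \geq 2$.

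Fix $\delta>0$ small. Proposition~\ref{delta pinching sequence} then produces a sequence $s_i \nearrow \infty$, a limit half-plane $\Pi$, a finite singular set $\mathfrak{S}_P \subset \Pi$, and positive gap functions $v_{s_i} = v_{s_i}^m - v_{s_i}^1$ between the top and bottom sheets of $\Pi_{s_i+\sigma}$ over $\Pi$. These satisfy the linear parabolic equation~(\ref{linear eq of type I graph}) with homogeneous Neumann condition on $\{y_2 = 0\}$ and the crucial pinching bound~(\ref{delta pinching of graph}). I will normalize by $M_i := \bar v_{s_i}(Q_\delta, \sigma_\delta)$, with $\bar v_{s_i}$ the even extension supplied by Lemma~\ref{Harnack estimate}, and set $w_{s_i} := \bar v_{s_i}/M_i$. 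The two-sided Harnack bounds of Lemma~\ref{Harnack estimate}, the coefficient convergence~(\ref{coefficients bound for extended type I eq}) in~(\ref{linear eq of extended type I graph}), and parabolic Schauder theory then yield $C^{2,1}_{\mathrm{loc}}$-bounds, so a subsequence converges to a function $w$ defined on $\mathbb{R}^2 \times \mathbb{R}$ away from the time-varying singular set $\cup_\sigma e^{\sigma/2} \overline{\mathfrak{S}_P} \times \{\sigma\}$; this set is removable since $w$ is locally bounded. A diagonal argument as $\delta \searrow 0$ and $\mathcal{T} \nearrow \infty$ then promotes $w$ to a global positive solution of the Ornstein--Uhlenbeck-type limit equation
\[
\partial_\sigma w \;=\; \Delta w - \tfrac{1}{2}\, y \cdot \nabla w + \tfrac{1}{2}\, w.
\]

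The contradiction will come from the Gaussian moment $F(\sigma) := \int_{\mathbb{R}^2} w(y,\sigma)\, \rho(y)\, dy$ with $\rho(y) = (4\pi)^{-1} e^{-|y|^2/4}$. A direct integration by parts using the identity $\Delta \rho - \mathrm{div}(\tfrac{1}{2} y \rho) = 0$ gives $F'(\sigma) = \tfrac{1}{2} F(\sigma)$, so $F(\sigma) = F(\sigma_0)\, e^{(\sigma - \sigma_0)/2}$ grows exponentially, and positivity together with the Harnack lower bound of Lemma~\ref{Harnack estimate} ensures $F(\sigma_0) > 0$ at some $\sigma_0 \geq \sigma_\delta + 1$. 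On the other hand, passing~(\ref{delta pinching of graph}) to the limit yields the local $L^1$-bound $\int_{B_R(O)} w(y,\sigma)\, dy \leq C(R)$ uniform in $\sigma$; combined with the Gaussian tail this pins $F(\sigma)$ uniformly bounded in $\sigma$, contradicting its exponential growth as $\mathcal{T} \to \infty$. Therefore $m=1$, which forces $\Theta_{\{\boldsymbol{\Sigma}_t\}}(P,T) = \tfrac{1}{2}$.

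The main obstacle I expect is making the diagonal limit rigorous so that $w$ genuinely lives on $\mathbb{R}^2 \times \mathbb{R}$ with enough control at spatial infinity to render the Gaussian moment computation meaningful: one needs a quantitative cutoff version of the integration by parts to dominate both the error from the cutoff and the possible growth of $w$ at infinity, and one must track the even-extension bookkeeping of the coefficients $\bar{\boldsymbol{g}}^{kl}_{s_i}$, $\bar b_{s_i}$, $\bar c_{s_i}$ across $\{y_2=0\}$ in the presence of the boundary curves $e^{\sigma/2} \overline{\mathfrak{S}_P}$. Both are ultimately handled by the pinching estimate~(\ref{delta pinching of graph}) combined with Lemma~\ref{Harnack estimate}, in direct analogy with the closed-surface treatment in \cite{LW}.
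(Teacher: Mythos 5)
Your setup (reduction to $P\in\mathcal{S}$, contradiction hypothesis $m\ge 2$, use of Proposition \ref{delta pinching sequence} and Lemma \ref{Harnack estimate}, normalization $w_{s_i}=\bar v_{s_i}/\bar v_{s_i}(Q_\delta,\sigma_\delta)$) matches the paper up to the point where the contradiction is extracted. From there you diverge: you want a single global positive solution $w$ of $\partial_\sigma w=\Delta w-\tfrac12 y\cdot\nabla w+\tfrac12 w$ on $\mathbb{R}^2\times\mathbb{R}$ and a Gaussian moment $F(\sigma)=\int w\rho$ satisfying $F'=\tfrac12 F$, contradicting a uniform bound. The paper instead takes the logarithm of the equation for $w$, tests against $\eta^2 e^{-|y|^2/4}$ with $\eta$ supported in the annulus $B_{\frac12\delta^{-1}}\setminus B_{\frac32\delta}$, completes the square to get $\int(|\nabla\eta|^2-\tfrac12\eta^2)e^{-|y|^2/4}\,dy\ge 0$, removes the restriction $\eta(O)=0$ by a logarithmic-cutoff (zero capacity) argument, and then violates this inequality with $\eta_R\to 1$.

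The gap in your route is concrete: all the bounds you have on $w$ live only on the annulus $B_{\frac12\delta^{-1}}(O)\setminus B_{\frac32\delta}(O)$ for a \emph{fixed} $\delta$, with constants $C(\delta)$ and a normalization point $Q_\delta,\sigma_\delta$ that both depend on $\delta$. There is no control of $w$ (neither pointwise nor in $L^1$) on $B_{\frac32\delta}(O)$ when $O\in\mathfrak{S}_P$, and no growth control at spatial infinity beyond radius $\delta^{-1}$. Consequently (i) the diagonal limit producing one global $w$ is not defined up to consistent normalization; (ii) $F(\sigma)$ is not known to be finite, let alone uniformly bounded --- the pinching estimate (\ref{delta pinching of graph}) explicitly excises the $\delta$-balls around $e^{\sigma/2}\mathfrak{S}_P$, so it gives no $L^1$ bound near the origin; and (iii) the identity $F'=\tfrac12 F$ requires integrating by parts across the possible singularity of $w$ at $O$, where $w$ could carry a fundamental-solution-type term producing a distributional source, so the ODE may simply be false. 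Your remark that these obstacles are handled ``in direct analogy with \cite{LW}'' is not accurate: \cite{LW} (and this paper) use the log-Harnack/weighted-Poincar\'e mechanism precisely because it transfers the entire argument onto fixed bounded test functions $\eta$, for which the point singularity at $O$ is negligible by a capacity argument; no global control of $w$ itself is ever needed. (Minor: the identity you invoke should read $\Delta\rho+\operatorname{div}(\tfrac12 y\rho)=0$, not with a minus sign; and the coefficients $\bar{\boldsymbol{g}}^{kl}_{s_i}$, $\bar b_{s_i}$ are only Lipschitz/$L^\infty$ after reflection, so classical Schauder $C^{2,1}$ bounds are not available --- the paper works with the weak formulation and De Giorgi--Nash--Moser estimates.)
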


\begin{proof}
We will focus on the case where $P\in\boldsymbol{\Gamma}$ since the
argument for $P\in\boldsymbol{U}$ is similar. 

Note that the mean convexity of $\boldsymbol{\Gamma}$ yields $\Theta_{\left\{ \boldsymbol{\Sigma}_{t}\right\} }\left(P,T\right)\geq\frac{1}{2}$
(cf. \cite{K}). Suppose that $\Theta_{\left\{ \boldsymbol{\Sigma}_{t}\right\} }\left(P,T\right)>\frac{1}{2}$,
then from Proposition \ref{small energy implies regularity}, Proposition
\ref{local graph thm} and Lemma \ref{Huisken's density}, we know
that $P\in\mathcal{S}$ (see Section \ref{Hypotheses}). Below we
will derive a contradiction in three steps and hence prove the proposition. 

$\mathbf{Step\,\,1}$: \textit{Prove that 
\[
\int_{\mathbb{R}^{2}}\left(\left|\nabla\eta\right|^{2}-\frac{1}{2}\eta^{2}\right)\,e^{-\frac{\left|y\right|^{2}}{4}}\,dy\,\geq0
\]
for any function $\eta\left(y_{1},y_{2}\right)\in C_{c}\left(\mathbb{R}^{2}\right)\cap W^{1,2}\left(\mathbb{R}^{2}\right)$
satisfying $\eta\left(0,0\right)=0$. }

\textit{Proof of Step 1.} By approximation, it suffices to show the
following. Given $0<\delta<1$ and function $\eta\left(y_{1},y_{2}\right)\in C_{c}^{1}\left(B_{\frac{1}{2}\delta^{-1}}\left(O\right)\setminus B_{\frac{3}{2}\delta}\left(O\right)\right)$,
there holds
\[
\int_{\mathbb{R}^{2}}\left(\left|\nabla\eta\right|^{2}-\frac{1}{2}\eta^{2}\right)\,e^{-\frac{\left|y\right|^{2}}{4}}\,dy\,\geq0.
\]
For that purpose, let $\left\{ \bar{v}_{s_{i}}\right\} $ be the sequence
of functions in Lemma \ref{Harnack estimate}. Define 
\[
w_{s_{i}}\left(y,\sigma\right)=\frac{\bar{v}_{s_{i}}\left(y,\sigma\right)}{\bar{v}_{s_{i}}\left(Q_{\delta},\sigma_{\delta}\right)}.
\]
By (\ref{linear eq of extended type I graph}), we get 
\[
\partial_{\sigma}w_{s_{i}}=\partial_{k}\left(\bar{\boldsymbol{g}}_{s_{i}}^{kl}\left(y,\sigma\right)\,\partial_{l}w_{s_{i}}\right)+\left(-\frac{1}{2}y+\bar{b}_{s_{i}}\left(y,\sigma\right)\right)\cdot\nabla w_{s_{i}}+\left(\frac{1}{2}+\bar{c}_{s_{i}}\left(y,\sigma\right)\right)w_{s_{i}}
\]
Note that $w_{s_{i}}\left(y_{1},y_{2},\sigma\right)$ is an even function
in $y_{2}$. For $i\gg1$, Lemma \ref{Harnack estimate} implies that
\[
C\left(\delta,\sigma\right)^{-1}\leq w_{s_{i}}\left(y,\sigma\right)\leq C\left(\delta\right)
\]
for $y\in B_{\frac{2}{3}\delta^{-1}}\left(O\right)\setminus B_{\frac{4}{3}\delta}\left(O\right)$,
$\sigma\geq\sigma_{\delta}+1$, and 
\[
w_{s_{i}}\left(y,\sigma\right)\geq C\left(\delta\right)^{-1}
\]
for $y\in B_{\frac{2}{3}\delta^{-1}}\left(O\right)\setminus B_{\frac{4}{3}\delta}\left(O\right)$,
$\sigma_{\delta}+1\leq\sigma\leq\sigma_{\delta}+2$. By (\ref{coefficients bound for extended type I eq})
and H$\ddot{\textrm{o}}$lder estimates (cf. \cite{AS}), there exists
$0<\alpha<1$ (which is independent of $s_{i}$) so that 
\[
\left[w_{s_{i}}\right]_{\alpha}\lesssim\frac{1}{\delta^{\alpha}}\left\Vert w_{s_{i}}\right\Vert _{L^{\infty}}\leq C\left(\delta\right).
\]
Consequently, there exists a positive function $w\left(y,\sigma\right)$
so that, after passing to a subsequence, $w_{s_{i}}$ converges locally
uniformly to $w$ (by Arzel$\grave{\textrm{a}}$-Ascoli theorem) and
$\nabla w_{s_{i}}$ converges weakly in $L_{loc}^{2}$ to $\nabla w$
(by Caccioppoli estimate) on $y\in\left(B_{\frac{1}{2}\delta^{-1}}\left(O\right)\setminus B_{\frac{3}{2}\delta}\left(O\right)\right)$,
$\sigma\geq\sigma_{\delta}+2$ as $i\rightarrow\infty$. Note that
$w\left(y_{1},y_{1},\sigma\right)$ is even in $y_{2}$ and satisfies
\begin{equation}
C\left(\delta,\sigma\right)^{-1}\leq w\left(y,\sigma\right)\leq C\left(\delta\right)\label{upper bound for w}
\end{equation}
for $y\in B_{\frac{1}{2}\delta^{-1}}\left(O\right)\setminus B_{\frac{3}{2}\delta}\left(O\right)$,
$\sigma\geq\sigma_{\delta}+1$, and 
\begin{equation}
w\left(y,\sigma\right)\geq C\left(\delta\right)^{-1}\label{lower bound for w}
\end{equation}
for $y\in B_{\frac{1}{2}\delta^{-1}}\left(O\right)\setminus B_{\frac{3}{2}\delta}\left(O\right)$,
$\sigma_{\delta}+1\leq\sigma\leq\sigma_{\delta}+2$. 

It follows that $\ln w_{s_{i}}$ converges locally uniformly to $\ln w$,
and $\nabla\ln w_{s_{i}}$ converges weakly in $L_{loc}^{2}$ to $\nabla\ln w$
as $i\rightarrow\infty$. Additionally, by taking the logarithm of
the equation for $w_{s_{i}}$, we have
\[
\partial_{\sigma}\ln w_{s_{i}}-\partial_{k}\left(\bar{\boldsymbol{g}}_{s_{i}}^{kl}\left(y,\sigma\right)\,\partial_{l}\ln w_{s_{i}}\right)
\]
\[
=\bar{\boldsymbol{g}}_{s_{i}}^{kl}\left(y,\sigma\right)\,\partial_{k}\ln w_{s_{i}}\,\partial_{l}\ln w_{s_{i}}+\left(-\frac{1}{2}y+\bar{b}_{s_{i}}\left(y,\sigma\right)\right)\cdot\nabla\ln w_{s_{i}}+\left(\frac{1}{2}+\bar{c}_{s_{i}}\left(y,\sigma\right)\right).
\]
Multiplication of the above equation by $\eta^{2}\left(y\right)e^{-\frac{\left|y\right|^{2}}{4}}$
and integration gives
\[
\left.\int_{\mathbb{R}^{2}}\ln w_{s_{i}}\left(y,\sigma\right)\,\eta^{2}\left(y\right)e^{-\frac{\left|y\right|^{2}}{4}}\,dy\right|_{\sigma=\sigma_{\delta}+2}^{\mathcal{T}}\,+\,\int_{\sigma_{\delta}+2}^{\mathcal{T}}\int_{\mathbb{R}^{2}}\bar{\boldsymbol{g}}_{s_{i}}^{kl}\left(y,\sigma\right)\,\partial_{l}\ln w_{s_{i}}\,\left(\partial_{k}\eta^{2}-\frac{1}{2}y_{k}\eta^{2}\right)e^{-\frac{\left|y\right|^{2}}{4}}\,dy\,d\sigma
\]
\[
=\int_{\sigma_{\delta}+2}^{\mathcal{T}}\int_{\mathbb{R}^{2}}\bar{\boldsymbol{g}}_{s_{i}}^{kl}\left(y,\sigma\right)\,\partial_{k}\ln w_{s_{i}}\,\partial_{l}\ln w_{s_{i}}\,\eta^{2}\left(y\right)e^{-\frac{\left|y\right|^{2}}{4}}\,dy\,d\sigma
\]
\[
+\int_{\sigma_{\delta}+2}^{\mathcal{T}}\int_{\mathbb{R}^{2}}\left\{ \left(-\frac{1}{2}y+\bar{b}_{s_{i}}\left(y,\sigma\right)\right)\cdot\nabla\ln w_{s_{i}}+\left(\frac{1}{2}+\bar{c}_{s_{i}}\left(y,\sigma\right)\right)\right\} \eta^{2}\left(y\right)e^{-\frac{\left|y\right|^{2}}{4}}\,dy\,d\sigma
\]
for any $\sigma_{\delta}+2<\mathcal{T}<\infty$. Letting $i\rightarrow\infty$
gives 
\[
\left.\int_{\mathbb{R}^{2}}\ln w\left(y,\sigma\right)\,\eta^{2}e^{-\frac{\left|y\right|^{2}}{4}}\,dy\right|_{\sigma=\sigma_{\delta}+2}^{\mathcal{T}}\,+\,\int_{\sigma_{\delta}+2}^{\mathcal{T}}\int_{\mathbb{R}^{2}}\nabla\ln w\cdot\left(\nabla\eta^{2}-\frac{1}{2}y\eta^{2}\right)e^{-\frac{\left|y\right|^{2}}{4}}\,dy\,d\sigma
\]
\[
=\int_{\sigma_{\delta}+2}^{\mathcal{T}}\int_{\mathbb{R}^{2}}\left|\nabla\ln w\right|^{2}\,\eta^{2}e^{-\frac{\left|y\right|^{2}}{4}}\,dy\,d\sigma\,+\,\int_{\sigma_{\delta}+2}^{\mathcal{T}}\int_{\mathbb{R}^{2}}\left\{ -\frac{1}{2}y\cdot\nabla\ln w+\frac{1}{2}\right\} \eta^{2}e^{-\frac{\left|y\right|^{2}}{4}}\,dy\,d\sigma.
\]
After completing the square, we get 
\[
\int_{\mathbb{R}^{2}}\ln\frac{w\left(y,\mathcal{T}\right)}{w\left(y,\sigma_{\delta}+2\right)}\,\eta^{2}e^{-\frac{\left|y\right|^{2}}{4}}\,dy\,+\,\int_{\sigma_{\delta}+2}^{\mathcal{T}}\int_{\mathbb{R}^{2}}\left(\left|\nabla\eta\right|^{2}-\frac{1}{2}\eta^{2}\right)\,e^{-\frac{\left|y\right|^{2}}{4}}\,dy\,d\sigma
\]
\[
=\int_{\sigma_{\delta}+2}^{\mathcal{T}}\int_{\mathbb{R}^{2}}\left|\nabla\eta-\eta\nabla\ln w\right|^{2}\,e^{-\frac{\left|y\right|^{2}}{4}}\,dy\,d\sigma.
\]
It follows that
\[
\int_{\mathbb{R}^{2}}\left(\left|\nabla\eta\right|^{2}-\frac{1}{2}\eta^{2}\right)\,e^{-\frac{\left|y\right|^{2}}{4}}\,dy\,\geq\,\frac{1}{\mathcal{T}-\left(\sigma_{\delta}+2\right)}\int_{\mathbb{R}^{2}}\ln\frac{w\left(y,\sigma_{\delta}+2\right)}{w\left(y,\mathcal{T}\right)}\,\eta^{2}e^{-\frac{\left|y\right|^{2}}{4}}\,dy
\]
\[
=\int_{\mathbb{R}^{2}}\ln\left(\frac{w\left(y,\sigma_{\delta}+2\right)}{w\left(y,\mathcal{T}\right)}\right)^{\frac{1}{\mathcal{T}-\left(\sigma_{\delta}+2\right)}}\,\eta^{2}e^{-\frac{\left|y\right|^{2}}{4}}\,dy.
\]
By (\ref{upper bound for w}) and (\ref{lower bound for w}), we have
\[
\left(\frac{w\left(y,\sigma_{\delta}+2\right)}{w\left(y,\mathcal{T}\right)}\right)^{\frac{1}{\mathcal{T}-\left(\sigma_{\delta}+2\right)}}\geq C\left(\delta\right)^{-\frac{1}{\mathcal{T}-\left(\sigma_{\delta}+2\right)}}\rightarrow1\qquad\textrm{as}\quad\mathcal{T}\nearrow\infty
\]
Thus, we have $\int_{\mathbb{R}^{2}}\left(\left|\nabla\eta\right|^{2}-\frac{1}{2}\eta^{2}\right)\,e^{-\frac{\left|y\right|^{2}}{4}}\,dy\,\geq0.$
Q.E.D.

$\mathbf{Step\,\,2}$: \textit{Prove that 
\[
\int_{\mathbb{R}^{2}}\left(\left|\nabla\eta\right|^{2}-\frac{1}{2}\eta^{2}\right)\,e^{-\frac{\left|y\right|^{2}}{4}}\,dy\,\geq0
\]
for any function $\eta\left(y_{1},y_{2}\right)\in C_{c}\left(\mathbb{R}^{2}\right)\cap W^{1,2}\left(\mathbb{R}^{2}\right)$.
Note that $\eta$ does not need to vanish at $\left(0,0\right)$ as
in Step 1. }

\textit{Proof of Step 2.} By approximation, it suffices to show the
following. Given a function $\eta\left(y_{1},y_{2}\right)\in C_{c}^{1}\left(\mathbb{R}^{2}\right)$,
there holds
\[
\int_{\mathbb{R}^{2}}\left(\left|\nabla\eta\right|^{2}-\frac{1}{2}\eta^{2}\right)\,e^{-\frac{\left|y\right|^{2}}{4}}\,dy\,\geq0.
\]
To achieve that, for every $0<\delta\ll1$, let 
\[
\psi_{\delta}\left(\xi\right)=2\delta\,\xi^{2\delta-1}\quad\forall\;\,0<\xi<1.
\]
By a simple calculation, we have 
\[
\int_{0}^{1}\psi_{\delta}\left(\xi\right)d\xi=1\quad\textrm{and}\quad\int_{0}^{1}\psi_{\delta}^{2}\left(\xi\right)\xi\,d\xi=\delta.
\]
Now define
\[
\zeta_{\delta}\left(r\right)=\left\{ \begin{array}{c}
\int_{0}^{r}\frac{1}{\delta}\,\psi_{\delta}\left(\frac{\rho}{\delta}\right)d\rho,\qquad0\leq r\leq\delta\\
1,\qquad r>\delta
\end{array}\right..
\]
Notice that $\zeta_{\delta}\left(r\right)\in C\left[0,\infty\right)$,
$\zeta_{\delta}\left(0\right)=0$, 
\[
\int_{0}^{\infty}\left|\zeta'_{\delta}\left(r\right)\right|^{2}r\,dr=\int_{0}^{\delta}\frac{1}{\delta^{2}}\,\psi_{\delta}^{2}\left(\frac{r}{\delta}\right)r\,dr=\delta,
\]
and $\zeta_{\delta}\rightarrow1$ as $\delta\searrow0$. 

Next, let's define 
\[
\eta_{\delta}\left(y\right)=\zeta_{\delta}\left(\left|y\right|\right)\,\eta\left(y\right).
\]
Then $\eta_{\delta}\in C_{c}\left(\mathbb{R}^{2}\right)\cap W^{1,2}\left(\mathbb{R}^{2}\right)$
and $\eta_{\delta}\left(0,0\right)=0$. It follows from Step 1 that
\[
\int_{\mathbb{R}^{2}}\left(\left|\nabla\eta_{\delta}\right|^{2}-\frac{1}{2}\eta_{\delta}^{2}\right)\,e^{-\frac{\left|y\right|^{2}}{4}}\,dy\,\geq0.
\]
Note that
\[
\nabla\eta_{\delta}\left(y\right)=\zeta_{\delta}\left(\left|y\right|\right)\nabla\eta\left(y\right)+\zeta'_{\delta}\left(\left|y\right|\right)\frac{y}{\left|y\right|}\,\eta\left(y\right)
\]
and that 
\[
\int_{\mathbb{R}^{2}}\left|\zeta'_{\delta}\left(\left|y\right|\right)\eta\left(y\right)\right|^{2}\,e^{-\frac{\left|y\right|^{2}}{4}}\,dy\,\leq\,2\pi\left\Vert \eta\right\Vert _{L^{\infty}}^{2}\int_{0}^{\infty}\left|\zeta'_{\delta}\left(r\right)\right|^{2}rdr\,=\,2\pi\left\Vert \eta\right\Vert _{L^{\infty}}^{2}\delta.
\]
Letting $\delta\searrow0$ gives $\int_{\mathbb{R}^{2}}\left(\left|\nabla\eta\right|^{2}-\frac{1}{2}\eta^{2}\right)\,e^{-\frac{\left|y\right|^{2}}{4}}\,dy\,\geq0.$
Q.E.D.

$\mathbf{Step\,\,3}$: For each $R>0$, let \textit{
\[
\eta_{R}\left(y\right)=\left\{ \begin{array}{c}
1,\quad\left|y\right|\leq R\\
R+1-\left|y\right|,\quad R<\left|y\right|\leq R+1\\
0,\quad\left|y\right|>R+1
\end{array}\right..
\]
}Then we have \textit{
\[
\int_{\mathbb{R}^{2}}\left(\left|\nabla\eta_{R}\right|^{2}-\frac{1}{2}\eta_{R}^{2}\right)\,e^{-\frac{\left|y\right|^{2}}{4}}\,dy\,\rightarrow-\frac{1}{2}\int_{\mathbb{R}^{2}}\,e^{-\frac{\left|y\right|^{2}}{4}}\,dy\,=-2\pi
\]
}as $R\rightarrow\infty$. However, by Step 2 we should have \textit{
\[
\int_{\mathbb{R}^{2}}\left(\left|\nabla\eta_{R}\right|^{2}-\frac{1}{2}\eta_{R}^{2}\right)\,e^{-\frac{\left|y\right|^{2}}{4}}\,dy\,\geq0\quad\forall\,R>0.
\]
}Thus, we get the desired contradiction.
\end{proof}
Thanks to Proposition \ref{unit Huisken density} , Allard's regularity
theorem (cf. \cite{Al}) and Proposition \ref{Li-Wang curvature estimate}
, now we can improve Proposition \ref{condensation compactness for MCF}
as follows. 
\begin{cor}
\label{flat tangent flow}Given a sequence $\left\{ \lambda_{i}\searrow0\right\} _{i\in\mathbb{N}}$,
there exist a half plane $\Pi$ so that the a subsequence of (\ref{parabolic rescaling})
converges to $\left\{ \Pi\right\} _{-\infty<\tau<0}$ with multiplicity
one. The half plane $\Pi$ meets $\lim_{i\rightarrow\infty}\boldsymbol{\Gamma}^{P,\lambda_{i}}\simeq\mathbb{R}^{2}$
orthogonally. 

An analogous result holds for $P\in\mathcal{S}\cap\boldsymbol{U}$,
in which case $\Pi$ is a plane. 
\end{cor}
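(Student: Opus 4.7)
The plan is to combine the condensation compactness of Proposition \ref{condensation compactness for MCF} with the density identity of Proposition \ref{unit Huisken density} to fix the multiplicity at one, and then to upgrade the convergence off the exceptional set $\mathfrak{S}_P$ to smooth convergence everywhere by invoking Allard's regularity theorem together with Proposition \ref{Li-Wang curvature estimate}.

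First I would apply Proposition \ref{condensation compactness for MCF} to extract a subsequence of $\{\boldsymbol{\Sigma}_{\tau}^{(P,T),\lambda_i}\}$ converging to $\{\Pi\}_{-\infty<\tau<0}$ with some multiplicity $m\in\mathbb{N}$ away from $\mathfrak{S}_P\times(-\infty,0)$, where $\Pi$ is a half-plane (respectively, a plane, if $P\in\mathcal{S}\cap\boldsymbol{U}$) meeting the limiting support surface orthogonally. Passing to the normalized flow through (\ref{drifted NMCF}), Lemma \ref{condensation compactness for NMCF} identifies this multiplicity with Huisken's density: $\Theta_{\{\boldsymbol{\Sigma}_t\}}(P,T)=m/2$ for $P\in\boldsymbol{\Gamma}$ (and, analogously, $\Theta_{\{\boldsymbol{\Sigma}_t\}}(P,T)=m$ for $P\in\boldsymbol{U}$). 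Proposition \ref{unit Huisken density} evaluates the left-hand side at $1/2$ or $1$ respectively, forcing $m=1$.

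It then remains to show that $\mathfrak{S}_P=\emptyset$. Fix $Q\in\mathfrak{S}_P$ and work at the time slice $\tau=-1$. Since the convergence is multiplicity one away from $Q$, the modified area ratio of $\boldsymbol{\Sigma}_{-1}^{(P,T),\lambda_i}$ on small annuli around $Q$ tends to that of $\Pi$, which is $1$; combining this with the monotonicity of Lemma \ref{monotonicity of area ratio}, the uniform mean-curvature bound (\ref{rescaled mean curvature bound}) and the fact that $\lambda_i\boldsymbol{\kappa}\to 0$, one concludes that the modified area ratio at $Q$ itself is $1+o(1)$ as $i\to\infty$. In the boundary case this uses reflection across $\boldsymbol{\Gamma}^{P,\lambda_i}$, as in Remark \ref{compensation}. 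Allard's regularity theorem (in the interior case), and its free-boundary analogue obtained via the reflection of Lemma \ref{reflection principle} (in the boundary case), then ensure that for $i\gg 1$ the surfaces $\boldsymbol{\Sigma}_{-1}^{(P,T),\lambda_i}$ satisfy the small-Lipschitz-graph hypothesis of Proposition \ref{Li-Wang curvature estimate} in a definite neighborhood of $Q$.

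Proposition \ref{Li-Wang curvature estimate} then yields a uniform $L^\infty$ bound on the second fundamental form of $\boldsymbol{\Sigma}_\tau^{(P,T),\lambda_i}$ in a fixed parabolic cylinder around $(Q,-1)$. In particular,
\[
\int_{\boldsymbol{\Sigma}_{-1}^{(P,T),\lambda_i}\cap B_r(Q)}\bigl|A_{\boldsymbol{\Sigma}_{-1}^{(P,T),\lambda_i}}\bigr|^2\,d\mathcal{H}^2\;\longrightarrow\;0
\]
for every sufficiently small $r>0$, contradicting the defining property (\ref{energy-concentrated points}) of $Q\in\mathfrak{S}_P$. Hence $\mathfrak{S}_P=\emptyset$ and the subsequence converges smoothly with multiplicity one to $\{\Pi\}_{-\infty<\tau<0}$ on the entire space-time slab. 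The main obstacle is precisely this last upgrade, from distributional multiplicity-one convergence to smooth convergence at the putative energy-concentration points; it requires simultaneously invoking Allard's $\varepsilon$-regularity in the free-boundary/mean-convex setting (through reflection) and the Li-Wang pseudolocality estimate, with the area-ratio input furnished by Lemma \ref{monotonicity of area ratio} applied on annuli that already lie in the smooth convergence region.
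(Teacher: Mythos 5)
Your proposal is correct and follows essentially the same route as the paper: multiplicity one is forced by combining Proposition \ref{condensation compactness for MCF}, Lemma \ref{condensation compactness for NMCF} and Proposition \ref{unit Huisken density}, and the exceptional set $\mathfrak{S}_{P}$ is ruled out by showing the (reflected/doubled) area ratio near any $Q\in\mathfrak{S}_{P}$ tends to one, then applying Allard's regularity theorem followed by Proposition \ref{Li-Wang curvature estimate} to contradict the energy concentration in (\ref{energy-concentrated points}). The only cosmetic difference is that the paper obtains the unit area ratio at $Q$ via weak varifold convergence of the reflected surfaces $\overline{\boldsymbol{\Sigma}_{-1}^{\left(P,T\right),\lambda_{i}}}$ to the doubled plane, while you argue via Lemma \ref{monotonicity of area ratio} on annuli in the smooth-convergence region; both are valid.
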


\begin{proof}
By Proposition \ref{condensation compactness for MCF}, Lemma \ref{condensation compactness for NMCF}
and Proposition \ref{unit Huisken density}, there exist a half plane
$\Pi$ (with free boundary on $\lim_{i\rightarrow\infty}\boldsymbol{\Gamma}^{P,\lambda_{i}}\simeq\mathbb{R}^{2}$)
so that, after passing to a subsequence, $\left\{ \boldsymbol{\Sigma}_{\tau}^{\left(P,T\right),\lambda_{i}}\right\} \rightarrow\left\{ \Pi\right\} $
with multiplicity one away from $\mathfrak{S}_{P}\times\left(-\infty,0\right)$,
where $\mathfrak{S}_{P}$ is defined by (\ref{energy-concentrated points}). 

Let $\widetilde{\boldsymbol{\Sigma}_{\tau}^{\left(P,T\right),\lambda_{i}}}$
be the reflection of $\boldsymbol{\Sigma}_{\tau}^{\left(P,T\right),\lambda_{i}}$
on $B_{\frac{1}{\lambda_{i}\kappa}}\left(O\right)$ with respect to
$\boldsymbol{\Gamma}^{P,\lambda_{i}}$, and define
\[
\overline{\boldsymbol{\Sigma}_{\tau}^{\left(P,T\right),\lambda_{i}}}=\boldsymbol{\Sigma}_{\tau}^{\left(P,T\right),\lambda_{i}}\cup\widetilde{\boldsymbol{\Sigma}_{\tau}^{\left(P,T\right),\lambda_{i}}}.
\]
By (\ref{rescaled area ratio bound}) and (\ref{rescaled mean curvature bound}),
$\overline{\boldsymbol{\Sigma}_{-1}^{\left(P,T\right),\lambda_{i}}}$
converges weakly to $\bar{\Pi}$ in the sense of varifolds as $i\rightarrow\infty$,
where $\bar{\Pi}=\Pi\cup\tilde{\Pi}$ is a plane. Suppose that $\mathfrak{S}_{P}\neq\emptyset$,
then pick $Q\in\mathfrak{S}_{P}$. By the weak convergence of varifolds
and a similar argument as in Remark \ref{isolated singularities},
for almost every $r>0$, we have
\[
\frac{\mathcal{H}^{2}\left(\overline{\boldsymbol{\Sigma}_{-1}^{\left(P,T\right),\lambda_{i}}}\cap B_{r}\left(Q\right)\right)}{\pi r^{2}}\rightarrow1\qquad\textrm{as}\quad i\rightarrow\infty.
\]
By (\ref{rescaled mean curvature bound}) and Allard's regularity
theorem (cf. \cite{Al}), it follows that $\boldsymbol{\Sigma}_{-1}^{\left(P,T\right),\lambda_{i}}$
is a graph of a function whose $C^{1,\alpha}$ norm are uniformly
bounded (independent of $i\in\mathbb{N}$) for any fixed $0<\alpha<1$.
Applying Proposition \ref{Li-Wang curvature estimate} to the MCF
$\left\{ \boldsymbol{\Sigma}_{\tau}^{\left(P,T\right),\lambda_{i}}\right\} _{-1\leq\tau<0}$,
we then get a uniform bound (independent of $i\in\mathbb{N}$) on
the second fundamental form of $\boldsymbol{\Sigma}_{-1}^{\left(P,T\right),\lambda_{i}}$
near $Q$. However, this contradicts with the choice of $Q$ (see
(\ref{energy-concentrated points})).
\end{proof}

\section{\label{Proof of the Main Theorem}Proof of the Main Theorem}

This section is devoted to prove our main theorem as stated below.
\begin{thm}
\label{main thm}The MCF $\left\{ \boldsymbol{\Sigma}_{t}\right\} _{0\leq t<T}$
given in Section \ref{Hypotheses} can be extended beyond time $T$.
\end{thm}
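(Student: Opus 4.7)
The plan is to prove that $\sup_{0\le t<T}\|A_{\boldsymbol{\Sigma}_t}\|_{L^\infty}<\infty$; Stahl's extension result \cite{S} then yields Theorem \ref{main thm}. By Remark \ref{isolated singularities} the energy-concentration set $\mathcal S$ is finite, and Proposition \ref{small energy implies regularity} together with Proposition \ref{local graph thm} already supplies a uniform bound on $|A|$ in a spacetime neighborhood of every limit point of $\{\boldsymbol{\Sigma}_t\}$ which does not lie in $\mathcal S$. It therefore suffices to exhibit, for each of the finitely many $P\in\mathcal S$, a fixed spacetime neighborhood of $(P,T)$ on which $|A|$ stays bounded.

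For an interior point $P\in\mathcal S\cap\boldsymbol U$, Proposition \ref{unit Huisken density} gives $\Theta_{\{\boldsymbol{\Sigma}_t\}}(P,T)=1$ and Corollary \ref{flat tangent flow} upgrades this to: every tangent flow at $(P,T)$ is a multiplicity-one plane. Since $\|H_{\boldsymbol{\Sigma}_t}\|_{L^\infty}$ is uniformly bounded by (\ref{mean curvature bound}), White's regularity theorem \cite{Wh} applies and delivers $C^{2,1}$-regularity in a spacetime neighborhood of $(P,T)$, exactly as in \cite{LW}.

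The main obstacle is the boundary case $P\in\mathcal S\cap\boldsymbol\Gamma$, where White's theorem does not apply directly because the flow lives in $\boldsymbol U$ rather than all of $\mathbb R^3$. The plan is to reflect across $\boldsymbol\Gamma$ and reduce to the interior case. Pull back the flow via the tubular parametrization $\boldsymbol\Phi$ of Definition \ref{tubular nbd} centered at $P$; by Lemma \ref{parametrization of MCF near the boundary} the pulled-back flow is locally a graph $u(y,t)$ over $\{y_2\ge 0\}$ satisfying the Neumann condition (\ref{homogeneous Neumann boundary condition}) and the structural relation (\ref{reflection condition}), and Lemma \ref{reflection principle} produces an even extension $\bar u$ on a full ball in $\mathbb R^2$ solving an approximate MCF-type parabolic equation. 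By the reflection-preserving property (\ref{reflection preserving}) of $\boldsymbol\Phi$ and the unity of Huisken's density (Proposition \ref{unit Huisken density}), the doubled graph has generalized density $2\cdot\tfrac12=1$ at $(O,T)$, and its tangent flow is a multiplicity-one plane by (the reflected version of) Corollary \ref{flat tangent flow}.

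The hard part is that the doubled flow is not an exact MCF but an approximate one whose coefficients differ from those of the Euclidean heat operator only through terms controlled by the $\boldsymbol\kappa$-graph condition via (\ref{approximate identity}); these deviations vanish at $O$ and are scale-subcritical, so a White-type regularity argument---comparing parabolic rescalings at $(O,T)$ with the multiplicity-one plane and combining the mean-curvature bound with Proposition \ref{Li-Wang curvature estimate} and Proposition \ref{small energy implies regularity}---still applies to $\bar u$ and yields smoothness up to $(O,T)$. Pushing forward by $\boldsymbol\Phi$ bounds $|A_{\boldsymbol{\Sigma}_t}|$ near $P$ uniformly in $t\in[T-\delta,T)$; carrying this out at each of the finitely many $P\in\mathcal S\cap\boldsymbol\Gamma$ completes the proof.
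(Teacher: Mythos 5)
Your reduction to bounding $\sup_{0\le t<T}\|A_{\boldsymbol{\Sigma}_t}\|_{L^\infty}$, your treatment of points outside $\mathcal S$, and your handling of interior points of $\mathcal S$ via Proposition \ref{unit Huisken density} and White's theorem all match the paper. The genuine gap is in the boundary case $P\in\mathcal S\cap\boldsymbol\Gamma$: you reflect across $\boldsymbol\Gamma$ and then assert that ``a White-type regularity argument still applies'' to the doubled flow. But the doubled flow is \emph{not} a mean curvature flow (nor a Brakke flow) unless $\boldsymbol\Gamma$ is a plane; it is only a solution of a perturbed quasilinear system whose coefficients, after the even extension of Lemma \ref{reflection principle}, are merely Lipschitz across $\{y_2=0\}$. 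White's local regularity theorem is proved through the exact Huisken monotonicity formula and the precise structure of MCF, and no ``almost-MCF'' or free-boundary version of it is available in the toolkit this paper assembles. Observing that the error terms are ``scale-subcritical'' does not by itself produce the theorem; you would in effect have to prove a boundary analogue of White's theorem, which is a substantial missing step. The paper flags exactly this obstruction in the introduction (``here we need an additional argument to handle possible boundary singularities'').

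What the paper does instead is a point-picking blow-up contradiction: it selects $(P_i,t_i)$ realizing the curvature maximum, lets $P_i\to P$, and splits into three cases. When $P\in\boldsymbol\Gamma$ and the blow-up happens at the boundary scale ($\liminf\,\mathrm{dist}(P_i,\partial\boldsymbol{\Sigma}_{t_i})\,A_i<\infty$), the rescalings about the nearest boundary points converge, via Lemma \ref{monotonicity formula for MCF} and Proposition \ref{unit Huisken density}, to a self-shrinker with Gaussian density $\tfrac12$, hence a half-plane, contradicting $|A|=1$ at the marked point. When the blow-up happens at an interior scale relative to the curvature but the limit point is still on $\boldsymbol\Gamma$ ($\liminf\,\mathrm{dist}(P_i,\partial\boldsymbol{\Sigma}_{t_i})\,A_i=\infty$), a two-scale argument combining Corollary \ref{flat tangent flow}, Proposition \ref{unit area ratio of MCF} and the rigidity Lemma \ref{rigidity} produces a nonflat complete minimal surface with modified area ratio at most $1+\vartheta$, again a contradiction. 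Your neighborhood-by-neighborhood strategy would also need to confront this last two-scale configuration, which your reflection argument does not isolate. To repair your proof you would either have to supply a genuine free-boundary (or perturbed-flow) version of White's theorem, or switch to the paper's contradiction scheme.
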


\begin{proof}
By \cite{S}, it suffices to show that the second fundamental form
of $\left\{ \boldsymbol{\Sigma}_{t}\right\} _{0\leq t<T}$ is uniformly
bounded. Below we will prove that by contradiction.

Suppose that 
\[
\limsup_{t\rightarrow T}\left\Vert A_{\boldsymbol{\Sigma}_{t}}\right\Vert _{L^{\infty}}=\infty.
\]
Then choose a sequence $\left\{ \left(P_{i},t_{i}\right)\right\} _{i\in\mathbb{N}}$
so that $t_{i}\nearrow T$, $P_{i}\in\boldsymbol{\Sigma}_{t_{i}}$
and 
\[
\left|A_{\boldsymbol{\Sigma}_{t_{i}}}\left(P_{i}\right)\right|=\sup_{0\leq\tau\leq T-\frac{1}{i}}\left\Vert A_{\boldsymbol{\Sigma}_{\tau}}\right\Vert _{L^{\infty}}\rightarrow\infty\qquad\textrm{as}\;\,\,i\rightarrow\infty.
\]
Since $\boldsymbol{\Sigma}_{0}$ is compact and $T<\infty$, (\ref{mean curvature bound})
implies (after passing to a subsequence) that $P_{i}\rightarrow P$.
Here we have three possibilities to consider: 
\begin{itemize}
\item \textit{Case 1: }$P\in\boldsymbol{U}$;
\item \textit{Case 2: }$P\in\boldsymbol{\Gamma}$ and $\liminf_{i\rightarrow\infty}\textrm{dist}\left(P_{i},\partial\boldsymbol{\Sigma}_{t_{i}}\right)\left|A_{\boldsymbol{\Sigma}_{t_{i}}}\left(P_{i}\right)\right|<\infty$;
\item \textit{Case 3: }$P\in\boldsymbol{\Gamma}$ and $\liminf_{i\rightarrow\infty}\textrm{dist}\left(P_{i},\partial\boldsymbol{\Sigma}_{t_{i}}\right)\left|A_{\boldsymbol{\Sigma}_{t_{i}}}\left(P_{i}\right)\right|=\infty$.
\end{itemize}
Since Case 1 has been studied in \cite{LW} by using Proposition \ref{unit Huisken density}
and White's regularity theorem (cf. \cite{Wh}), we will focus on
the remaining two cases. Actually, the argument for Case 2 is similar
to that for Case 1. 

$\mathbf{Case\;2}$ ($P\in\boldsymbol{\Gamma}$ and $\liminf_{i\rightarrow\infty}\textrm{dist}\left(P_{i},\partial\boldsymbol{\Sigma}_{t_{i}}\right)\left|A_{\boldsymbol{\Sigma}_{t_{i}}}\left(P_{i}\right)\right|<\infty$): 

By passing to a subsequence, we may assume that 
\[
\textrm{dist}\left(P_{i},\partial\boldsymbol{\Sigma}_{t_{i}}\right)A_{i}\leq R<\infty
\]
for all $i\in\mathbb{N}$, where $A_{i}=\left|A_{\boldsymbol{\Sigma}_{t_{i}}}\left(P_{i}\right)\right|$.
Choose $\mathring{P}_{i}\in\partial\boldsymbol{\Sigma}_{t_{i}}$ so
that $\textrm{dist}\left(P_{i},\partial\boldsymbol{\Sigma}_{t_{i}}\right)=\left|P_{i}-\mathring{P}_{i}\right|$.
Clearly, we have $\mathring{P}_{i}\rightarrow P$ as $i\rightarrow\infty$. 

Given $\varepsilon>0$, by Proposition \ref{unit Huisken density},
Lemma \ref{monotonicity formula for MCF}, Lemma \ref{Huisken's density}
and the continuous dependence of the Gaussian integral on the parameters,
there exists $\delta>0$ so that for $i\gg1$ we have
\[
\frac{1}{2}\leq\sup_{t_{i}-\delta^{2}\leq t<t_{i}}\,\int_{\boldsymbol{\Sigma}_{t}}e^{85\left(\boldsymbol{\kappa}^{2}\left(t_{i}-t\right)\right)^{\frac{2}{5}}}\eta_{\boldsymbol{\Gamma};\mathring{P}_{i},t_{i}}\,\Psi_{\boldsymbol{\Gamma};\mathring{P}_{i},t_{i}}\left(X,t\right)\,d\mathcal{H}^{2}\left(X\right)\leq\,\frac{1+\varepsilon}{2}
\]
(cf. \cite{E}). After passing to a subsequence, we may assume that
\[
\frac{1}{2}\leq\sup_{t_{i}-\left(\frac{1}{A_{i}}\right)^{2}\leq t<t_{i}}\,\int_{\boldsymbol{\Sigma}_{t}}e^{85\left(\boldsymbol{\kappa}^{2}\left(t_{i}-t\right)\right)^{\frac{2}{5}}}\eta_{\boldsymbol{\Gamma};\mathring{P}_{i},t_{i}}\,\Psi_{\boldsymbol{\Gamma};\mathring{P}_{i},t_{i}}\left(X,t\right)\,d\mathcal{H}^{2}\left(X\right)\leq\,\frac{1+\frac{1}{i}}{2}\quad\forall\,\,i.
\]

Let 
\[
\hat{\boldsymbol{\Sigma}}_{\tau}^{i}=A_{i}\left(\Sigma_{t_{i}+\frac{\tau}{A_{i}^{2}}}^{i}-\mathring{P}_{i}\right),\qquad\mathcal{P}_{i}=A_{i}\left(P_{i}-\mathring{P}_{i}\right).
\]
Since $\mathcal{P}_{i}\in B_{2R}\left(O\right)$ for $i\gg1$, we
may assume (by passing to a subsequence) that $\mathcal{P}_{i}\rightarrow\mathcal{P}$.
In addition, we have 
\begin{itemize}
\item $\sup_{-t_{i}A_{i}^{2}\leq\tau\leq0}\left\Vert A_{\hat{\boldsymbol{\Sigma}}_{\tau}^{i}}\right\Vert _{L^{\infty}}\leq1=\left|A_{\hat{\boldsymbol{\Sigma}}_{0}^{i}}\left(\mathcal{P}_{i}\right)\right|$;
\item $\hat{\boldsymbol{U}}_{i}\coloneqq A_{i}\left(\boldsymbol{U}-\mathring{P}_{i}\right)\rightarrow\mathbb{R}_{+}^{3},\quad\hat{\boldsymbol{\Gamma}}_{i}\coloneqq\partial\hat{\boldsymbol{U}}_{i}=A_{i}\left(\boldsymbol{\Gamma}-\mathring{P}_{i}\right)$
satisfies $\boldsymbol{\kappa}_{i}=\frac{\boldsymbol{\kappa}}{A_{i}}$
graph condition and converges to $\partial\mathbb{R}_{+}^{3}\simeq\mathbb{R}^{2}$;
\item $\frac{1}{2}\leq\int_{\hat{\boldsymbol{\Sigma}}_{\tau}^{i}}e^{85\left(\boldsymbol{\kappa}_{i}^{2}\left(-\tau\right)\right)^{\frac{2}{5}}}\eta_{\hat{\boldsymbol{\Gamma}}_{i};O,0}\,\Psi_{\hat{\boldsymbol{\Gamma}}_{i};O,0}\left(X,\tau\right)\,d\mathcal{H}^{2}\left(X\right)\leq\,\frac{1+\frac{1}{i}}{2}\quad\forall\,-1\leq\tau<0$. 
\end{itemize}
It follows from Proposition \ref{compactness} that $\left\{ \hat{\boldsymbol{\Sigma}}_{\tau}^{i}\right\} _{-t_{i}A_{i}^{2}\leq\tau\leq0}\rightarrow\left\{ \hat{\boldsymbol{\Sigma}}_{\tau}\right\} _{-\infty<\tau\leq0}$.
The limiting MCF has free boundary on $\lim_{i\rightarrow\infty}\hat{\boldsymbol{\Gamma}}_{i}\simeq\mathbb{R}^{2}$
and it satisfies 
\[
\left|A_{\hat{\boldsymbol{\Sigma}}_{0}}\left(\mathcal{P}\right)\right|=1,
\]
\[
\int_{\hat{\boldsymbol{\Sigma}}_{\tau}}\Psi_{\mathbb{R}^{2};O,0}\left(X,t\right)\,d\mathcal{H}^{2}\left(X\right)=\,\frac{1}{2}\qquad\forall\,-1\leq\tau<0.
\]
Then Lemma \ref{monotonicity formula for MCF} implies that $\left\{ \hat{\boldsymbol{\Sigma}}_{\tau}\right\} _{-1\leq\tau<0}$
is self-shrinking. Namely, it satisfies 
\[
H_{\hat{\boldsymbol{\Sigma}}_{\tau}}+\frac{X_{\tau}\cdot N_{\hat{\boldsymbol{\Sigma}}_{\tau}}}{2\left(-\tau\right)}=0\qquad\forall\,\,X_{\tau}\in\hat{\boldsymbol{\Sigma}}_{\tau},\,-1\leq\tau<0
\]
(cf. \cite{Wh,B}). In particular, we get
\[
X_{\tau}\cdot N_{\hat{\boldsymbol{\Sigma}}_{\tau}}=2\tau H_{\hat{\boldsymbol{\Sigma}}_{\tau}}\rightarrow0\qquad\textrm{as}\quad\tau\nearrow0.
\]
Consequently, $\hat{\boldsymbol{\Sigma}}_{0}$ must be a $C^{2}$
cone (with vertex at $O$) and hence a plane. This contradicts with
the property that $\left|A_{\hat{\boldsymbol{\Sigma}}_{0}}\left(\mathcal{P}\right)\right|=1$. 

$\mathbf{Case\;3}$ ($P\in\boldsymbol{\Gamma}$ and $\liminf_{i\rightarrow\infty}\textrm{dist}\left(P_{i},\partial\boldsymbol{\Sigma}_{t_{i}}\right)\left|A_{\boldsymbol{\Sigma}_{t_{i}}}\left(P_{i}\right)\right|=\infty$): 

Firstly, we claim that $\left|P-P_{i}\right|\geq\sqrt{T-t_{i}}$ for
all but finitely many $i\in\mathbb{N}$. For otherwise, after passing
to a subsequence, we would have
\[
\left|P-P_{i}\right|<\sqrt{T-t_{i}}\qquad\forall\;i.
\]
By Corollary \ref{flat tangent flow}, $\left\{ \frac{1}{\sqrt{T-t_{i}}}\left(\boldsymbol{\Sigma}_{t_{i}}-P\right)\right\} _{i\in\mathbb{N}}$
converges to a half-plane and hence
\[
\sqrt{T-t_{i}}\,\left|A_{\boldsymbol{\Sigma}_{t_{i}}}\left(P_{i}\right)\right|\rightarrow0\qquad\textrm{as}\quad i\rightarrow\infty,
\]
which implies
\[
\textrm{dist}\left(\partial\boldsymbol{\Sigma}_{t_{i}},P\right)\geq\textrm{dist}\left(\partial\boldsymbol{\Sigma}_{t_{i}},P_{i}\right)-\left|P-P_{i}\right|\gg\left|A_{\boldsymbol{\Sigma}_{t_{i}}}\left(P_{i}\right)\right|^{-1}-\sqrt{T-t_{i}}\gg\sqrt{T-t_{i}}
\]
for $i\gg1$. However, (\ref{mean curvature bound}) yields that
\begin{equation}
\textrm{dist}\left(\partial\boldsymbol{\Sigma}_{t_{i}},P\right)\leq\boldsymbol{\Lambda}\left(T-t_{i}\right)\label{traveling speed}
\end{equation}
(cf. \cite{K}). Thus, we get a contradiction. 

By passing to a subsequence, let's assume that 
\begin{equation}
\left|P-P_{i}\right|\geq\sqrt{T-t_{i}}\qquad\forall\;i.\label{pseudo-interior seq}
\end{equation}
It follows from (\ref{traveling speed}) and (\ref{pseudo-interior seq})
that
\[
\textrm{dist}\left(P,\partial\boldsymbol{\Sigma}_{t_{i}}\right)\leq\boldsymbol{\Lambda}\left(T-t_{i}\right)\ll\sqrt{T-t_{i}}\leq\left|P-P_{i}\right|
\]
for $i\gg1$, which implies
\[
\left|P_{i}-P\right|\geq\textrm{dist}\left(P_{i},\partial\boldsymbol{\Sigma}_{t_{i}}\right)-\textrm{dist}\left(P,\partial\boldsymbol{\Sigma}_{t_{i}}\right)\geq\textrm{dist}\left(P_{i},\partial\boldsymbol{\Sigma}_{t_{i}}\right)-\frac{1}{2}\left|P_{i}-P\right|.
\]
Thus we have
\begin{equation}
\left|P_{i}-P\right|\,\geq\,\frac{2}{3}\,\textrm{dist}\left(P_{i},\partial\boldsymbol{\Sigma}_{t_{i}}\right).\label{distance estimate}
\end{equation}

Let $C\geq1$ be a large number to be determined. For $i\gg1$, let
$\lambda_{i}=C\left|P-P_{i}\right|$, then Corollary \ref{flat tangent flow}
implies that
\[
\boldsymbol{\Sigma}_{\tau}^{\left(P,T\right),\lambda_{i}}\coloneqq\frac{1}{\lambda_{i}}\left(\boldsymbol{\Sigma}_{T+\lambda_{i}^{2}\tau}-P\right)
\]
converges to a half-plane $\Pi$ with multiplicity one as $i\rightarrow\infty$.
In particular, we have
\[
\left\Vert A_{\boldsymbol{\Sigma}_{-1}^{\left(P,T\right),\lambda_{i}}}\right\Vert _{L^{\infty}\left(B_{1}\left(O\right)\right)}\leq1
\]
for $i\gg1$. Moreover, since
\[
\sup_{-\frac{T}{\lambda_{i}^{2}}\leq\tau<0}\left\Vert H_{\boldsymbol{\Sigma}_{\tau}^{\left(P,T\right),\lambda_{i}}}\right\Vert _{L^{\infty}}\leq\lambda_{i}\boldsymbol{\Lambda}
\]
and $\boldsymbol{\Gamma}^{P,\lambda_{i}}$ satisfies a $\lambda_{i}\boldsymbol{\kappa}-$graph
condition, Proposition \ref{unit area ratio of MCF} implies that
there is $0<\delta<1$ so that 
\begin{equation}
\sup_{0<r\leq\delta}\frac{\mathcal{H}^{2}\left(\boldsymbol{\Sigma}_{\tau}^{\left(P,T\right),\lambda_{i}}\cap B_{r}\left(Q\right)\right)_{Q}+\mathcal{H}^{2}\left(\left(\boldsymbol{\Sigma}_{\tau}^{\left(P,T\right),\lambda_{i}}\cap B_{r}\left(Q\right)\right)_{Q}\cap\tilde{B}_{r}\left(Q\right)\right)}{\pi r^{2}}\leq1+\vartheta\label{unit area ratio condition}
\end{equation}
\[
\forall\;\,Q\in\boldsymbol{\Sigma}_{\tau}^{\left(P,T\right),\lambda_{i}}\cap B_{\delta}\left(O\right),\quad-1\leq\tau<\min\left\{ 0,-1+\frac{\delta}{\lambda_{i}\Lambda}\right\} =0,\quad i\gg1,
\]
where $\vartheta>0$ is the constant in Lemma \ref{rigidity}. Note
that the reflection $\tilde{B}_{r}\left(Q\right)$ is with respect
to $\boldsymbol{\Gamma}^{P,\lambda_{i}}\coloneqq\frac{1}{\lambda_{i}}\left(\boldsymbol{\Gamma}-P\right)$
(see Definition \ref{complementary ball}). 

Let $A_{i}=\left|A_{\boldsymbol{\Sigma}_{t_{i}}}\left(P_{i}\right)\right|$
and define 
\[
\hat{\boldsymbol{\Sigma}}_{\tau}^{i}=A_{i}\left(\boldsymbol{\Sigma}_{t_{i}+\frac{\tau}{A_{i}^{2}}}-P_{i}\right),
\]
then we have 
\[
\sup_{t_{i}A_{i}^{2}\leq\tau\leq0}\left\Vert A_{\hat{\boldsymbol{\Sigma}}_{\tau}^{i}}\right\Vert _{L^{\infty}}\leq1=\left|A_{\hat{\boldsymbol{\Sigma}}_{0}^{i}}\left(O\right)\right|,
\]
\[
\sup_{t_{i}A_{i}^{2}\leq\tau\leq0}\left\Vert H_{\hat{\boldsymbol{\Sigma}}_{\tau}^{i}}\right\Vert _{L^{\infty}}\leq\frac{\boldsymbol{\Lambda}}{A_{i}}.
\]
Note that 
\begin{equation}
\hat{\boldsymbol{\Sigma}}_{0}^{i}=\lambda_{i}A_{i}\,\boldsymbol{\Sigma}_{\tau_{i}}^{\left(P,T\right),\lambda_{i}}+A_{i}\left(P-P_{i}\right),\label{correspondence of two rescalings}
\end{equation}
where 
\[
\lambda_{i}A_{i}=C\left|P-P_{i}\right|\left|A_{\boldsymbol{\Sigma}_{t_{i}}}\left(P_{i}\right)\right|\geq\frac{2}{3}C\,\textrm{dist}\left(\partial\boldsymbol{\Sigma}_{t_{i}},P_{i}\right)\left|A_{\boldsymbol{\Sigma}_{t_{i}}}\left(P_{i}\right)\right|\rightarrow\infty,
\]
\[
A_{i}\left|P-P_{i}\right|\geq\frac{2}{3}\,\textrm{dist}\left(P_{i},\partial\boldsymbol{\Sigma}_{t_{i}}\right)\left|A_{\boldsymbol{\Sigma}_{t_{i}}}\left(P_{i}\right)\right|\rightarrow\infty
\]
by (\ref{distance estimate}), and 
\[
\tau_{i}=-\frac{T-t_{i}}{\lambda_{i}^{2}}\,\in\left[-C^{-2},0\right)
\]
by (\ref{pseudo-interior seq}). Furthermore, given $\mathcal{Q}\in\hat{\boldsymbol{\Sigma}}_{0}^{i}\cap B_{\frac{1}{2}A_{i}\left|P-P_{i}\right|}\left(O\right)$
and $R>0$, (\ref{unit area ratio condition}) and (\ref{correspondence of two rescalings})
yield
\[
\frac{\mathcal{H}^{2}\left(\hat{\boldsymbol{\Sigma}}_{0}^{i}\cap B_{R}\left(\mathcal{Q}\right)\right)_{\mathcal{Q}}+\mathcal{H}^{2}\left(\left(\hat{\boldsymbol{\Sigma}}_{0}^{i}\cap B_{R}\left(\mathcal{Q}\right)\right)_{\mathcal{Q}}\cap\tilde{B}_{R}\left(\mathcal{Q}\right)\right)}{\pi R^{2}}
\]
\[
=\frac{\mathcal{H}^{2}\left(\boldsymbol{\Sigma}_{\tau_{i}}^{\left(P,T\right),\lambda_{i}}\cap B_{\frac{R}{\lambda_{i}A_{i}}}\left(Q\right)\right)_{Q}+\mathcal{H}^{2}\left(\left(\boldsymbol{\Sigma}_{\tau_{i}}^{\left(P,T\right),\lambda_{i}}\cap B_{\frac{R}{\lambda_{i}A_{i}}}\left(Q\right)\right)_{Q}\cap\tilde{B}_{\frac{R}{\lambda_{i}A_{i}}}\left(Q\right)\right)}{\pi\left(\frac{R}{\lambda_{i}A_{i}}\right)^{2}}\leq1+\vartheta
\]
for $i\gg1$, where 
\[
Q=\frac{1}{\lambda_{i}}\left(P_{i}-P+\frac{\mathcal{Q}}{A_{i}}\right)\,\in\boldsymbol{\Sigma}_{\tau_{i}}^{\left(P,T\right),\lambda_{i}}\cap\left(B_{\frac{3}{2}C^{-1}}\left(O\right)\setminus B_{\frac{1}{2}C^{-1}}\left(O\right)\right).
\]
Note that $\left(B_{\frac{3}{2}C^{-1}}\left(O\right)\setminus B_{\frac{1}{2}C^{-1}}\left(O\right)\right)=B_{\delta}\left(O\right)\setminus B_{\frac{\delta}{3}}\left(O\right)$
if we choose $C=\frac{3}{2}\delta^{-1}$, and that the reflection
in the first ratio (i.e. $\tilde{B}_{R}\left(\mathcal{Q}\right)$)
is with respect to $\hat{\boldsymbol{\Gamma}}^{i}=A_{i}\left(\boldsymbol{\Gamma}-P_{i}\right)$.
Passing to a subsequence, we have $\left\{ \hat{\boldsymbol{\Sigma}}_{\tau}^{i}\right\} \rightarrow\left\{ \hat{\boldsymbol{\Sigma}}\right\} _{-\infty<\tau\leq0}$,
where $\hat{\boldsymbol{\Sigma}}$ is a complete minimal surface (with
free boundary on $\lim_{i\rightarrow\infty}\hat{\boldsymbol{\Gamma}}^{i}$
if $\lim_{i\rightarrow\infty}\hat{\boldsymbol{\Gamma}}^{i}\neq\emptyset$).
The limiting minimal surface satisfies 
\[
\sup_{R>0}\,\sup_{\mathcal{Q}\in\hat{\boldsymbol{\Sigma}}}\frac{\mathcal{H}^{2}\left(\hat{\boldsymbol{\Sigma}}\cap B_{R}\left(\mathcal{Q}\right)\right)_{\mathcal{Q}}+\mathcal{H}^{2}\left(\left(\hat{\boldsymbol{\Sigma}}\cap B_{R}\left(\mathcal{Q}\right)\right)_{\mathcal{Q}}+\tilde{B}_{R}\left(\mathcal{Q}\right)\right)}{\pi R^{2}}\leq1+\vartheta,
\]
\[
\left\Vert A_{\hat{\boldsymbol{\Sigma}}}\right\Vert _{L^{\infty}}\leq1=\left|A_{\hat{\boldsymbol{\Sigma}}}\left(O\right)\right|.
\]
A contradiction follows from Lemma \ref{rigidity}.
\end{proof}

\vspace{0.25in}
\email{
\noindent Department of Mathematics, Indiana University - Rawles Hall\\
831 East 3rd St.
Bloomington, IN 47405-7106\\\\
E-mail address: \textsf{siaoguo@iu.edu}
}

\end{document}